\newtheorem{thm}{Theorem}[section]
\newtheorem{prop}[thm]{Proposition}
\newtheorem{lem}[thm]{Lemma}
\newtheorem{cor}[thm]{Corollary}
\newtheorem{rem}[thm]{Remark}
\newtheorem*{thmn}{Theorem}
\newcommand{\subsubsubsection}{\@startsection{paragraph}{4}{\z@}%
 {1.0\Cvs \@plus.5\Cdp \@minus.2\Cdp}%
 {.1\Cvs \@plus.3\Cdp}%
 {\reset@font\sffamily\normalsize}
 }
\DeclareMathOperator{\Spec}{Spec}
\DeclareMathOperator{\Gal}{Gal}
\DeclareMathOperator{\Hom}{Hom}
\DeclareMathOperator{\Ker}{Ker}
\DeclareMathOperator{\Ind}{Ind} 
\DeclareMathOperator{\Tr}{Tr}
\DeclareMathOperator{\Irr}{Irr}
\DeclareMathOperator{\GL}{GL}
\DeclareMathOperator{\Sp}{Sp}
\DeclareMathOperator{\SU}{SU}
\DeclareMathOperator{\HU}{HU}
\DeclareMathOperator{\HSp}{HSp}
\DeclareMathOperator{\HSpU}{HSpU}
\DeclareMathOperator{\oU}{U}
\DeclareMathOperator{\oH}{H}
\DeclareMathOperator{\oO}{O}
\newcommand{\bF}{\mathbb{F}}
\newcommand{\bG}{\mathbb{G}}
\newcommand{\bP}{\mathbb{P}}
\newcommand{\bQ}{\mathbb{Q}}
\newcommand{\bZ}{\mathbb{Z}}
\newcommand{\cO}{\mathcal{O}}
\newcommand{\ol}{\overline}
\newcommand{\wt}{\widetilde}
\newcommand{\cf}{\textit{cf.\ }}
\begin{document}

\title
{Geometric construction of Heisenberg--Weil representations for finite unitary groups\\ and Howe correspondences} 

\author{Naoki Imai and Takahiro Tsushima}
\date{}
\maketitle
\begin{abstract}
We give a geometric construction of 
the Heisenberg--Weil representation of a finite unitary group 
by the middle \'{e}tale cohomology of an algebraic variety 
over a finite field, whose rational points give 
a unitary Heisenberg group. 
Using also a Frobenius action, 
we give a geometric realization of the Howe correspondence for 
$(\mathrm{Sp}_{2n},\mathrm{O}_2^-)$ 
over any finite field including characteristic two. 
As an application, we show that unipotency is preserved 
under the Howe correspondence. 
\end{abstract}

\footnotetext{2020 \textit{Mathematics Subject Classification}. 
 Primary: 20C33; Secondary: 11F27.} 
\footnotetext{Keywords: Weil representation, Howe correspondence, Lusztig induction, finite unitary group} 

\section{Introduction}
For a reductive dual pair $(G,G')$ 
over a field (\cf \cite[\S 5]{Howthetainv}), 
by regarding the Weil representation  
as a representation of $G \times G'$ 
and decomposing it into irreducible components, 
we have a correspondence from irreducible representations of $G$ to representations of $G'$, 
which is called the Howe correspondence for $(G,G')$.  
Let $q$ be a power of a prime number $p$. 
If $q$ is odd, 
Weil representations of symplectic groups over 
$\mathbb{F}_q$ are 
studied in \cite{SaiRepsym} and \cite{HoweCharW} 
after \cite{Weiopuni}. 
Weil representations of general linear groups over $\mathbb{F}_q$ 
and unitary groups for the extension $\mathbb{F}_{q^2}/\bF_q$ 
are constructed in \cite{GerWeil} for any $q$ 
(\cf \cite{LehWuni} in the unitary case). 

In this paper, we give a geometric construction of 
Weil representations of unitary groups. 
A finite unitary group $\oU_n(\bF_q)$ of degree $n$ is regarded as 
a subgroup of an automorphism group of a certain Heisenberg group 
$\oH(h_n)$ determined by a hermitian form $h_n$ on $\bF_{q^2}^n$, 
which we call a unitary Heisenberg group (\cf \cite[Lemma 3.3]{GerWeil} and \S \ref{unit}).  
For a non-trivial character $\psi$ of 
the center of $\oH(h_n)$, 
there exists a unique irreducible representation $\rho_{\oH (h_n),\psi}$ of 
$\oH(h_n)$ which contains $\psi$.  
In \cite[Theorem 3.3]{GerWeil}, 
an irreducible representation $\rho_{\HU (h_n),\psi}$ 
of the semidirect product 
$\oH(h_n) \rtimes \oU_n(\bF_q)$ extending $\rho_{\oH(h_n),\psi}$ 
is constructed, which we call the Heisenberg--Weil representation. 
The Weil representation is 
defined to be the restriction of $\rho_{\HU(h_n),\psi}$ 
to the unitary group. 

Our geometric construction is very simple. 
We regard the unitary Heisenberg group 
as the set of rational points of an algebraic variety, 
which has a natural action of 
$\oH(h_n) \rtimes \oU_n(\bF_q)$. 
This algebraic variety is isomorphic to the affine smooth variety 
defined by 
$a^q+a=\sum_{i=1}^n x_i^{q+1}$ in 
$\mathbb{A}_{\mathbb{F}_q}^{n+1}$, which we denote by $X_n$. 
Let $\ell \neq p$ be a prime number. 
We show the following: 
\begin{thmn}
We have an isomorphism 
\[
 H_{\rm c}^{n}(X_{n,\overline{\mathbb{F}}_q},\overline{\mathbb{Q}}_{\ell})[\psi] \simeq \rho_{\HU(h_n),\psi} 
\]
as representations of $\oH(h_n) \rtimes \oU_n(\bF_q)$. 
\end{thmn}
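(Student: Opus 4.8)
The plan is to analyze $X_n$ through the projection $\pr\colon X_n\to\bA^n$, $(a,x)\mapsto x$, which exhibits $X_n$ as a torsor under the finite group scheme $Z=\ker(\Tr_{\bF_{q^2}/\bF_q})$---the center of $H(h_n)$---acting by $a\mapsto a+z$. Decomposing the induced action of $Z$ on cohomology into central characters gives
\[
H^{\ast}_{\rm c}(X_{n,\ol{\bF}_q},\ol{\bQ}_\ell)[\psi]\simeq H^{\ast}_{\rm c}(\bA^n_{\ol{\bF}_q},\cF_\psi),\qquad \cF_\psi=f^{\ast}\cL_\psi,
\]
where $f(x)=\sum_{i=1}^n x_i^{q+1}$ and $\cL_\psi$ is the Artin--Schreier sheaf attached to $\psi$. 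Since $f$ is a sum of one-variable functions, the Künneth formula reduces the computation to the rank-one sheaf $\cL_\psi(x^{q+1})$ on $\bA^1$, which is lisse with wild ramification only at $\infty$, of Swan conductor $q+1$. The Grothendieck--Ogg--Shafarevich formula then gives $\chi_{\rm c}(\bA^1,\cL_\psi(x^{q+1}))=1-(q+1)=-q$, and geometric nontriviality forces $H^0_{\rm c}=H^2_{\rm c}=0$; hence $H^1_{\rm c}(\bA^1,\cL_\psi(x^{q+1}))$ is $q$-dimensional. By Künneth, $H^{\ast}_{\rm c}(X_n)[\psi]$ is concentrated in degree $n$ with $\dim H^n_{\rm c}(X_n)[\psi]=q^n$, and $Z$ acts on it through $\psi$.

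I would next read off the $H(h_n)$-module structure. The Heisenberg group $H(h_n)=X_n(\bF_{q^2})$ acts on $X_{n,\ol{\bF}_q}$ by its multiplication law, extending the central translations above and covering the translation action of $V=H(h_n)/Z$ on $\bA^n$; hence it acts on $H^n_{\rm c}(X_n)[\psi]$ with central character $\psi$. Since
\[
\dim_{\ol{\bQ}_\ell}H^n_{\rm c}(X_n)[\psi]=q^n=\sqrt{[H(h_n):Z]}
\]
equals the dimension of the unique irreducible representation of $H(h_n)$ with central character $\psi$, the Stone--von Neumann theorem leaves no room for multiplicities and yields $H^n_{\rm c}(X_{n,\ol{\bF}_q},\ol{\bQ}_\ell)[\psi]\simeq\rho_{\psi,H(h_n)}$ as $H(h_n)$-modules.

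What remains---and what I expect to be the main obstacle---is to identify the extension to $H(h_n)\rtimes U_n(\bF_q)$ with G\'erardin's $\rho_{\psi,\mathit{HU}(h_n)}$. As $\rho_{\psi,H(h_n)}$ is irreducible and $U_n(\bF_q)$-stable, any two extensions differ by a character of $U_n(\bF_q)^{\rmab}$; for $n\ge2$ the latter is detected by $\det\colon U_n(\bF_q)\to U_1(\bF_q)$, while for $n=1$ the group $U_1(\bF_q)$ is itself abelian. So it suffices to compute the trace of each $g\in U_n(\bF_q)$ on $H^n_{\rm c}(X_n)[\psi]$ and match it with the character of $\rho_{\psi,\mathit{HU}(h_n)}$ recorded in \cite[Theorem 3.3]{GerWeil}. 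Here the Frobenius enters decisively: since the smooth compactification of $X_1$ is the Hermitian curve, maximal over $\bF_{q^2}$, the geometric $\bF_{q^2}$-Frobenius acts on each factor $H^1_{\rm c}(\bA^1,\cL_\psi(x^{q+1}))$ by the scalar $-q$, so $\Fr_{q^2}$ acts on $H^n_{\rm c}(X_n)[\psi]$ as $(-q)^n$ and commutes with the $U_n(\bF_q)$-action. Applying the Grothendieck--Lefschetz trace formula to $zg\Fr_{q^2}$ for $z\in Z$ and using orthogonality of the characters of $Z$, one obtains
\[
\tr\bigl(g\mid H^n_{\rm c}(X_n)[\psi]\bigr)=\frac{1}{q^n\,|Z|}\sum_{z\in Z}\psi(z)^{-1}\,\#\bigl\{(a,x)\in X_n(\ol{\bF}_q):x=g\,x^{q^2},\ a^{q^2}=a-z\bigr\}.
\]
I expect the real work to be the evaluation of these point counts---character sums over the Frobenius-twisted fixed locus of $g$ on the Hermitian norm form, which should reduce to products of one-variable Gauss sums---and their reorganization into G\'erardin's character formula. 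Carrying this out for $g$ running over representatives of $U_n(\bF_q)^{\rmab}$ (in particular the scalar matrices in $U_1(\bF_q)$) forces the residual character twist to be trivial, giving the asserted isomorphism of $H(h_n)\rtimes U_n(\bF_q)$-representations.
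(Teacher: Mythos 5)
Your first two paragraphs are sound and agree with the paper's own route: the paper also reduces via \eqref{XLpsi} and K\"unneth to the curve $z^q+z=x^{q+1}$ (it gets the dimension $q$ from the Kummer-sheaf decomposition $H_{\rm c}^1(C_{\overline{\mathbb{F}}_q},\overline{\mathbb{Q}}_{\ell})[\psi]\simeq\bigoplus_{\chi\neq 1}\chi$ and Deligne's \emph{Sommes trig.} rather than Grothendieck--Ogg--Shafarevich, but that is immaterial), and then invokes Stone--von Neumann exactly as you do. The gap is in your third paragraph, which is where all the remaining content lies, and it is twofold. First, your reduction ``for $n\ge 2$ any character of $U_n(\bF_q)$ is detected by $\det$'' is false in the exceptional case $(n,q)=(2,2)$: there $U(\bF_4^2,h_2)\simeq \mathbb{Z}/3\mathbb{Z}\times\mathfrak{S}_3$, whose character group is generated by $\det$ \emph{and} the sign character of $\mathfrak{S}_3$; the paper must treat this case separately (its Lemma \ref{pre}, computing the trace of the swap matrix $\iota$ to be $-2$ via anti-commutativity of cup products). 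Second, and more seriously, the step that actually pins down the extension --- evaluating the traces and matching them against G\'erardin's character formula --- is not carried out: you set up a Lefschetz fixed-point identity and then state that you ``expect the real work'' of evaluating the resulting character sums to reduce to Gauss sums. That expectation is the theorem; deferring it leaves the proof incomplete, since a priori the two extensions could differ by a nontrivial character, and ruling this out requires exhibiting specific elements with \emph{nonzero, computed} trace (a twist by $\chi$ is invisible at any $g$ with $\Tr(g)=0$, so ``representatives of $U_n(\bF_q)^{\rmab}$'' do not suffice as stated).

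For comparison, the paper avoids point counting altogether: it limits the possible twists by group theory (perfectness of $\mathit{SU}_n(q)$ outside $(n,q)=(2,3),(2,2)$, plus Ennola's tables), and then computes the trace at diagonal elements $g=\mathrm{diag}(\zeta_1,\dots,\zeta_n)$ with $\zeta_i\in\mu_{q+1}\setminus\{1\}$ directly from the $\mu_{q+1}$-isotypic decomposition of $H_{\rm c}^1$ and K\"unneth: each factor contributes $\sum_{\chi\neq 1}\chi(\zeta_i)=-1$, so the trace is $(-1)^n$, which matches G\'erardin's value $(-1)^n(-q)^{N(g)}$ since $N(g)=0$. This forces $\chi(\zeta_1\cdots\zeta_n)=1$ for all such $\zeta_i$, hence $\chi=1$. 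If you want to salvage your Frobenius/point-counting strategy, you would need to actually evaluate the twisted point counts (essentially redoing G\'erardin's character computation geometrically) and to include elements such as $\iota$ for $(n,q)=(2,2)$; as written, the proposal stops short of the proof's essential step.
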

We give also a branching formula for the Weil representation 
of a unitary group in Proposition \ref{prop:bran}. 
Thanks to the geometric construction, 
the branching formula is a simple consequence of 
the K\"{u}nneth formula. 

By taking the mod $\ell$ cohomology, 
we can obtain a modular Heisenberg--Weil representation 
of a unitary group without ambiguity of semi-simplifications. 
Another geometric approach for the Weil representations 
of symplectic groups is given in \cite{GuHaGeoW}, 
which is quite different from ours. 

For a unitary group of even degree, 
we can give a geometric construction of 
the Weil representation 
using a rational form of the variety $X_{2n}$, 
which is denoted by $X_{2n}'$. 
Using the Frobenius action coming from the rationality of $X_{2n}'$, 
we can construct a representation of 
$\oU_{2n}(\bF_q) \rtimes \Gal (\mathbb{F}_{q^2}/\mathbb{F}_q)$ 
as the middle ${\ell}$-adic cohomology of the variety. 
We write $W_n$ for the obtained 
$\oU_{2n}(\bF_q) \rtimes 
 \Gal (\mathbb{F}_{q^2}/\mathbb{F}_q)$-representation. 
This Frobenius action is important for 
an application to the Howe correspondence 
as we explain below. 

In characteristic two, a general formulation of 
the Howe correspondence for symplectic-orthogonal case 
is not known (\cf \cite[\S 1]{TieDualcl}). 
However, there is a trial to construct such a correspondence in 
\cite[\S 7]{GuTiCross}, where they consider a dual pair 
$(\Sp_{2n},\oO_2^{\pm})$ and construct a representation 
of the product group 
$\Sp_{2n}(\bF_q) \times \oO_2^{\pm}(\bF_q)$. 
In this paper, we give a geometric construction of the Howe correspondence for 
the reductive dual pair $(\Sp_{2n},\oO_2^-)$ as follows. 
The group $\oO_2^-(\bF_q)$ is isomorphic to 
the dihedral group $D_{2(q+1)}$. Hence, we can identify  
$\oO_2^-(\bF_q)$ with 
$\oU_1(\bF_q) \rtimes \Gal (\mathbb{F}_{q^2}/\mathbb{F}_q)$. 
We have natural homomorphisms 
$\Sp_{2n}(\bF_q) \hookrightarrow \oU_{2n}(\bF_q)$ and 
$ \oU_{2n}(\bF_q) \times \oU_1(\bF_q) \to  \oU_{2n}(\bF_q)$ 
(\cf \cite[\S1]{SriWcl}). 
Hence, we have a homomorphism 
\[
 \Sp_{2n}(\bF_q ) \times \oO_2^-(\bF_q )
 \to \oU_{2n}(\bF_q ) \rtimes \Gal (\mathbb{F}_{q^2}/\mathbb{F}_q). 
\]
Inflating $W_n$ under this homomorphism, we obtain 
a representation of $\Sp_{2n}(\bF_q ) \times \oO_2^-(\bF_q )$. 
Using this representation, 
we define the Howe correspondence for 
$(\Sp_{2n},\oO_2^-)$. 
We note that the Frobenius action coming from the rationality of $X_{2n}'$ 
plays an important role in the construction of the Howe correspondence. 
Since the construction is geometric, we can relate 
the representation of 
$\Sp_{2n}(\bF_q ) \times \oO_2^-(\bF_q )$ 
with a Lusztig induction in a geometric way. 
Using the relation, we can show that 
the Howe correspondence preserves unipotency for any $q$. 
The preservation of the unipotency is proved in 
\cite[Theorem 3.5]{AdMoUnip} for symmetric and even-orthogonal pairs 
if $p$ is odd and $q$ is large enough. 

In Section \ref{sec:HW}, 
we recall the Heisenberg--Weil representation for a unitary group and 
give a geometric realization of the representation in the 
$\ell$-adic cohomology of the variety $X_n$. 
In Section \ref{sec:An}, 
we give a geometric realization the Heisenberg--Weil representation 
for a unitary group of even degree using another coordinate. 
We study also a Frobenius action, which we need later. 
In Section \ref{sec:RelFer}, 
we relate the cohomology of the variety $X_n$ 
with the cohomology of a Fermat variety 
or its complement in a projective space. 
In Section \ref{sec:Lus}, 
we recall some facts on the Lusztig induction. 
In Section \ref{sec:uni}, 
we relate the Weil representation of a unitary group 
with a Lusztig induction. 
We give also a branching formula for the Weil representation 
of a unitary group. 
In Section \ref{sec:sop}, 
we construct a representation of 
$\Sp_{2n}(\bF_q) \times \oO_2^-(\bF_q)$ and 
define the Howe correspondence for $(\Sp_{2n},\oO_2^-)$. 
We show that this Howe correspondence 
is compatible with the usual one, which is defined if $p \neq 2$, up to an explicit sign. 
We relate the representation with a Lusztig induction and 
show the preservation of unipotency under the Howe correspondence. 

In \cite{ITShinlift}, we construct 
Shintani lifts for Weil representations of unitary groups 
as an application of the geometric construction in this paper. 
In a subsequent paper \cite{ITModlW}, we study mod $\ell$ cohomology 
and a mod $\ell$ Howe correspondence 
using results in this paper. 

\subsection*{Acknowledgements}
The authors would like to thank a referee for 
helpful comments and suggestions. 
This work was supported by JSPS KAKENHI Grant Numbers 17K18722, 20K03529, 21H00973. 

\subsection*{Notation}
For a scheme $X$ over a field $F$ and a field extension $F'/F$, 
let $X_{F'}$ denote the base change of $X$ to $F'$. 

For a finite group $G$, let $1_G$ denote the trivial representation of $G$ over a field. We simply write $1$ for $1_G$ when $G$ is clear from the context. 
For a finite group $G$, an irreducible $G$-representation 
$\lambda$ and a finite-dimensional 
$G$-representation $\rho$, let $\rho[\lambda]$ denote the $\lambda$-isotypic part of $\rho$. 

\section{Heisenberg--Weil representation for unitary group}\label{sec:HW}
\subsection{Unitary Heisenberg group}\label{unit}
We recall the unitary Heisenberg group. 
Let $q$ be a power of a prime number $p$. 
Let $V$ be a finite-dimensional 
$\mathbb{F}_{q^2}$-vector space 
with a nondegenerate $\varepsilon$-hermitian form $h$, 
where $\varepsilon \in \{ \pm 1 \}$. 
We put 
\[
 \oH(V,h)=\{ (v,a) \in V \times \mathbb{F}_{q^2} \mid a +
 \varepsilon a^q =h(v,v) \} . 
\]
We regard $\oH(V,h)$ as a group with multiplication 
\begin{equation}\label{hei}
 (v,a)(v',a')=(v+v',a+a'+h(v,v')). 
\end{equation}
We put 
$\bF_{q,\varepsilon} =\{ a \in \bF_{q^2} \mid a +\varepsilon a^q =0\}$. 
We sometimes abbreviate $\pm 1$ as $\pm$. 
The center $Z$ of $\oH(V,h)$ equals 
$\{0\} \times \bF_{q,\varepsilon}$. 
The quotient $\oH(V,h)/Z$ is identified with $V$.  
Hence, $\oH(V,h)$ is a Heisenberg group. 

Let $\ell \neq p$ be a prime number. 
For each non-trivial character 
$\psi$ of $Z$ over $\overline{\mathbb{Q}}_{\ell}$, 
there exists a unique irreducible representation of 
$\oH(V,h)$ whose restriction to $Z$ 
contains $\psi$ by 
the Stone--von Neumann theorem. 
We denote by $\rho_{\oH(V,h),\psi}$ 
the unique irreducible representation of $\oH(V,h)$ containing $\psi$. 
The dimension of $\rho_{\oH(V,h),\psi}$ equals the square 
root of the index $[\oH(V,h):Z]$. 

Let $\oU (V,h)$ denote the isometry group of $(V,h)$. 
Then, $\oU (V,h)$ acts on $\oH(V,h)$ by 
$(v,a) \mapsto (gv,a)$ for 
$g \in \oU (V,h)$ and $(v,a) \in \oH(V,h)$. 
We put 
\[
 \HU(h)= \oH(V,h) \rtimes \oU (V,h). 
\]

\begin{rem}\label{rem:HH}
Assume that $h$ is hermitian. 
We take an element 
$\xi \in \mathbb{F}_{q^2}$ 
such that $\xi^{q-1}=-1$. 
Then 
\[
 \xi h \colon V \times V \to \mathbb{F}_{q^2};\ (v,v') \mapsto \xi h(v,v') 
\]
is a skew-hermitian form. 
We have an isomorphism $\oH(V,h) \xrightarrow{\sim} 
\oH(V,\xi h);\ (v,a) \mapsto (v,\xi a)$ and an identity 
$\oU (V,h) = \oU (V,\xi h)$. 
\end{rem}

\begin{lem}\label{lem:HWcha}
There is the unique irreducible representation 
$\rho_{\HU(h),\psi}$ of $\HU(h)$ 
which is characterized by  
\begin{itemize}
\item $(\rho_{\HU(h),\psi})|_{\oH(V,h)}
\simeq \rho_{\oH(V,h),\psi}$ 
as $\oH(V,h)$-representations and 
\item
$\Tr \rho_{\HU(h),\psi}(g)=(-1)^n (-q)^{N(g)}$ 
for $g \in \oU (V,h)$, where we put 
$n=\dim_{\mathbb{F}_{q^2}} V$ and 
$N(g)=\dim_{\mathbb{F}_{q^2}} \Ker (g-1)$. 
\end{itemize}
\end{lem}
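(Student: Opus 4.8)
The plan is to split the statement into an existence part, which I would import from Gérardin's construction, and a uniqueness part, which I would deduce from Clifford theory together with the shape of the trace formula. Write $n=\dim_{\mathbb{F}_{q^2}}V$ and $G=\mathit{HU}(h)=H(V,h)\rtimes U(V,h)$, and abbreviate $\theta=\rho_{\psi,H(V,h)}$. The first thing I would record is that $\theta$ is $G$-stable: for $g\in U(V,h)$ the twist $\theta^{g}$ is again an irreducible representation of $H(V,h)$ whose restriction to the center $Z=\{0\}\times\mathbb{F}_{q,\varepsilon}$ contains $\psi$, because $U(V,h)$ acts by $(v,a)\mapsto(gv,a)$ and hence fixes $Z$ pointwise; by the uniqueness part of the Stone--von Neumann theorem, $\theta^{g}\simeq\theta$. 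Thus $\theta$ prolongs to a projective representation of $G$ on the space of $\theta$, the obstruction to linearizing it lying in $H^{2}(U(V,h),\overline{\mathbb{Q}}_{\ell}^{\times})$. For existence I would invoke \cite[Theorem 3.3]{GerWeil}, which produces an honest irreducible representation $\widetilde{\rho}$ of $G$ with $\widetilde{\rho}|_{H(V,h)}\simeq\theta$ (so in particular the obstruction class vanishes) and whose character on $U(V,h)$ is $\Tr\widetilde{\rho}(g)=(-1)^{n}(-q)^{N(g)}$. This gives a representation satisfying both bullets; as a sanity check, at $g=1$ one has $N(1)=n$, so $\Tr\widetilde{\rho}(1)=q^{n}=\sqrt{[H(V,h):Z]}=\dim\theta$.

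For uniqueness I would run Clifford theory relative to the normal subgroup $H(V,h)$ with $G$-stable character $\theta$. Since a linear extension $\widetilde{\rho}$ exists, every irreducible representation of $G$ lying over $\theta$ has the form $\widetilde{\rho}\otimes(\sigma\circ\pr)$ with $\sigma\in\Irr(U(V,h))$, and the condition $(\widetilde{\rho}\otimes\sigma)|_{H(V,h)}\simeq\theta$ forces $\dim\sigma=1$. Hence the irreducible extensions of $\theta$ to $G$ form a torsor under $\Hom(U(V,h),\overline{\mathbb{Q}}_{\ell}^{\times})$, the character of $\widetilde{\rho}\otimes\chi$ at $g\in U(V,h)$ being $\chi(g)\Tr\widetilde{\rho}(g)$. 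The second bullet then rigidifies this torsor: the value $(-1)^{n}(-q)^{N(g)}$ is nonzero for every $g$, so if both $\widetilde{\rho}$ and $\widetilde{\rho}\otimes\chi$ satisfy it then $\chi(g)=1$ for all $g$, i.e.\ $\chi$ is trivial and the two representations coincide. Combining the two parts yields exactly one irreducible representation with the required properties.

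The main obstacle is the trace formula itself. Granting only that some linear extension exists, pinning its character down to be precisely $(-1)^{n}(-q)^{N(g)}$ is the substantive input, since this is the character formula for the Weil representation of the finite unitary group; I would take it from Gérardin's explicit model. By contrast the stability of $\theta$, the torsor description of the extensions, and the nonvanishing argument that forces $\chi$ to be trivial are all formal, so once the character formula is in hand the lemma follows.
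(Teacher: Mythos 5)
Your proposal is correct and takes essentially the same route as the paper: the paper likewise obtains existence and the trace formula by citing G\'erardin (\cite[Theorem 3.3, Theorem 4.9.2]{GerWeil}), and your Clifford-theory argument (stability of $\rho_{\psi,H(V,h)}$ via Stone--von Neumann, the torsor of extensions under characters of $U(V,h)$, and the nonvanishing of $(-1)^n(-q)^{N(g)}$) merely makes explicit the uniqueness that the paper leaves inside the citation. The one point you gloss over is that the lemma is stated for $\varepsilon$-hermitian forms with $\varepsilon \in \{\pm 1\}$, so the paper additionally invokes Remark \ref{rem:HH} (replacing $h$ by $\xi h$ with $\xi^{q-1}=-1$) to transfer G\'erardin's construction between the hermitian and skew-hermitian cases.
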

\begin{proof}
This follows from \cite[Theorem 3.3 and Theorem 4.9.2]{GerWeil} 
and Remark \ref{rem:HH}. 
\end{proof}
We call 
the representation $\rho_{\HU(h),\psi}$ in Lemma \ref{lem:HWcha} 
the Heisenberg--Weil representation of 
$\HU(h)$ for $\psi$ (\cf \cite[Proposition 3.1]{LehWuni}). 
We put 
\begin{equation}\label{eq:omegadef}
\omega_{\oU (V,h)}=\rho_{\HU(h),\psi}|_{\oU (V,h)},    
\end{equation}
which is independent of $\psi$ by Lemma \ref{lem:HWcha}. 
The representation $\omega_{\oU (V,h)}$ is 
called the Weil representation of $\oU (V,h)$.

\subsection{Cohomology of a curve}\label{CCC}

For a finite abelian group $A$, we simply write 
$A^{\vee}$ for the character group 
$\mathrm{Hom} (A,\ol{\bQ}_{\ell}^{\times})$. 

Let 
$\Lambda \in \{ \overline{\mathbb{Q}}_{\ell} , 
 \overline{\mathbb{F}}_{\ell}\}$. 
For a separated and of finite type scheme $Y$ 
over $\mathbb{F}_q$ which admits a left action of a finite group $G$, let $G$ act on $H_{\rm c}^i(Y_{\overline{\mathbb{F}}_q},\Lambda )$ as $(g^\ast)^{-1}$ for $g \in G$. 
We write $\mathbb{A}^i$ for an $i$-dimensional affine 
space over $\overline{\mathbb{F}}_q$. 
We write $\mathbb{G}_{\mathrm{m}}$ for 
 $\Spec \overline{\mathbb{F}}_q[t^{\pm 1}]$. 

For $\psi \in \Hom (\mathbb{F}_{q,\varepsilon},\Lambda^{\times})$, 
let $\mathscr{L}_{\psi}$ denote the 
rank one sheaf on 
$\mathbb{A}_{\mathbb{F}_q}^1$ obtained as 
the pushforward via $\psi^{-1} \in \Hom (\mathbb{F}_{q,\varepsilon},\Lambda^{\times})$ of 
the $\mathbb{F}_{q,\varepsilon}$-torsor over 
$\mathbb{A}_{\mathbb{F}_q}^1 =\Spec \mathbb{F}_q[t]$ 
defined by 
$z^q+\varepsilon z=t$ (\cf \cite[Sommes trig., D\'{e}finition 1.7]{DelCoet}). 
For a variety $X$ over $\mathbb{F}_q$ and a regular function 
$f \colon X \to \mathbb{A}_{\mathbb{F}_q}^1$, 
let $\mathscr{L}_{\psi}(f)$ denote the pull-back
of $\mathscr{L}_{\psi}$ by $f$. 
We put 
\[
 \mu_{q+1} = \{ \zeta \in \mathbb{F}_{q^2} \mid \zeta^{q+1}=1\}. 
\]
For $\chi \in \Hom (\mu_{q+1},\Lambda^{\times})$, 
let $\mathscr{K}_{\mathbb{G}_{\mathrm{m}},\chi}$ denote the Kummer 
sheaf on 
$\mathbb{G}_{\mathrm{m},\mathbb{F}_{q^2}}$ 
obtained as the pushforward via $\chi^{-1}$ of 
the $\mu_{q+1}$-torsor over 
$\mathbb{G}_{\mathrm{m},\mathbb{F}_{q^2}}=\Spec \mathbb{F}_{q^2}[t^{\pm 1}]$ 
defined by $y^{q+1}=t$. 

Let $C$ be the affine smooth curve defined by 
$z^q+z=x^{q+1}$ over $\mathbb{F}_{q^2}$. 
The group $\mathbb{F}_{q,+}$ acts on $C$ by 
$z \mapsto z+a$ for $a \in \mathbb{F}_{q,+}$. 

Let $\psi \in \Hom (\mathbb{F}_{q,+},\Lambda^{\times}) \setminus \{1\}$ 
in the rest of this section. 
The first claim of the following lemma is a variant of 
\cite[Lemma 7.1]{ITstab3}. 
The cohomology of a variant is studied also in \cite[\S 3.3.1]{BoRoCoxmod}. 
\begin{lem}\label{tri}
Assume that $\Lambda=\ol{\bQ}_{\ell}$. 
\begin{enumerate}
\item\label{enu:H1C} 
We have 
$H_{\rm c}^i(C_{\overline{\mathbb{F}}_q},\overline{\mathbb{Q}}_{\ell})[\psi]
=0$ for $i \neq 1$ and 
an isomorphism 
\[
 H_{\rm c}^1(C_{\overline{\mathbb{F}}_q},\overline{\mathbb{Q}}_{\ell})[\psi] 
 \simeq \bigoplus_{\chi \in 
 \mu_{q+1}^{\vee} \setminus \{1\}} 
 \overline{\mathbb{Q}}_{\ell} (\chi) 
\]
as $\mu_{q+1}$-representations, 
where $\overline{\mathbb{Q}}_{\ell} (\chi)$ denotes 
$\overline{\mathbb{Q}}_{\ell}$ with action of $\mu_{q+1}$ by $\chi$. 
In particular, we have 
$\dim H_{\rm c}^1(C_{\overline{\mathbb{F}}_q},\overline{\mathbb{Q}}_{\ell})[\psi]=q$. 
\item\label{enu:HCfree}
We regard 
$\psi$ as an element of 
$\mathrm{Hom}(\mathbb{F}_{q,+},\ol{\bZ}_{\ell}^{\times})$ via the factorization through $\ol{\bZ}_{\ell}^{\times} \subset \ol{\bQ}_{\ell}^{\times}$. 
The $\ol{\bZ}_{\ell}$-module 
$H_{\rm c}^1(C_{\overline{\mathbb{F}}_q},\ol{\bZ}_{\ell})[\psi]$ 
is free and $H^i_{\rm c}(C_{\overline{\mathbb{F}}_q},\ol{\bZ}_{\ell})[\psi]=0$ 
for $i \neq 1$. 
\item\label{enu:Cmodl} 
Let $\overline{\psi}$ denote the composite of $\psi$ and the reduction map 
$\ol{\bZ}_{\ell}^{\times} \to \ol{\bF}_{\ell}^{\times}$. 
Then we have an isomorphism 
$(H_{\rm c}^i(C_{\overline{\mathbb{F}}_q},\ol{\bZ}_{\ell})[\psi]) \otimes_{\ol{\bZ}_{\ell}} \ol{\bF}_{\ell}
\xrightarrow{\sim} H_{\rm c}^i(C_{\overline{\mathbb{F}}_q},\ol{\bF}_{\ell})[\overline{\psi}]$ 
for $i \geq 0$. 
\end{enumerate}
\end{lem}
\begin{proof}
We have isomorphisms 
\begin{equation}\label{eq:CGm}
H_{\rm c}^1(C_{\overline{\mathbb{F}}_q},\overline{\mathbb{Q}}_{\ell})[\psi] \simeq 
H_{\rm c}^1(\mathbb{A}^1,\mathscr{L}_{\psi}(x^{q+1})) \simeq \bigoplus_{\chi \in \mu_{q+1}^{\vee} \setminus \{1\}}
H_{\rm c}^1(\mathbb{G}_{\mathrm{m}},\mathscr{L}_{\psi} \otimes 
\mathscr{K}_{\mathbb{G}_{\mathrm{m}},\chi}) 
\end{equation}
as in \cite[2.2]{ITlgsw1}. 
We have 
\begin{equation}\label{som}
H_{\rm c}^i(\mathbb{G}_{\mathrm{m}},\mathscr{L}_{\psi} \otimes 
\mathscr{K}_{\mathbb{G}_{\mathrm{m}},\chi})=0\ \textrm{for $i \neq 1$}, \quad 
\dim H_{\rm c}^1(\mathbb{G}_{\mathrm{m}},\mathscr{L}_{\psi} \otimes 
\mathscr{K}_{\mathbb{G}_{\mathrm{m}},\chi})=1
\end{equation}
by \cite[Sommes trig., Proposition 4.2]{DelCoet}. 
The first assertion follows from \eqref{eq:CGm} and \eqref{som}. 
The second assertion follows from a well-known fact that 
the compactly supported 
$\cO$-cohomology of a smooth affine curve over $\overline{\mathbb{F}}_q$ 
is torsion free. 
The third assertion follows from 
the second assertion. 
\end{proof}

\subsection{Geometric construction}\label{Geom}
Let $n$ be a positive integer.
We write an element $v \in \mathbb{F}_q^n$ 
as $v=(v_k)$. 
We consider the nondegenerate hermitian 
form on $\mathbb{F}_{q^2}^n$ defined by 
\begin{equation}\label{eq:hFq2}
 h_n \colon \mathbb{F}_{q^2}^n \times \mathbb{F}_{q^2}^n 
 \to \mathbb{F}_{q^2};\ 
 ((v_k),(v'_k)) \mapsto  \sum_{k=1}^n v_k^q v'_k. 
\end{equation}
We put 
\[
 \oH(h_n) = \oH(\mathbb{F}_{q^2}^n,h_n), \quad 
 \oU (h_n) = \oU (\mathbb{F}_{q^2}^n,h_n). 
\]
Let $X_n$ be the affine smooth variety over $\mathbb{F}_{q^2}$
defined by 
\[
 z^q+z=\sum_{k=1}^n x_k^{q+1}  
\]
in $\mathbb{A}_{\mathbb{F}_{q^2}}^{n+1}
=\Spec \mathbb{F}_{q^2}[x_1,\ldots,x_n,z]$. 
Let $\oH(h_n)$ act on $X_n$ by 
\[
X_n \to X_n;\ ((x_k),z) \mapsto \Bigl( (x_k+v_k),z+a+\sum_{k=1}^n v_k^q x_k \Bigr) \quad \textrm{for $((v_k),a) \in \oH(h_n)$} 
\]
and $\oU (h_n)$ act on $X_n$ by 
\[
X_n \to X_n;\ (x,z) \mapsto (gx,z) \quad \textrm{for $g \in \oU (h_n)$}, 
\]
where we regard $x=(x_k)$ as a column vector. 
The variety $X_n$ admits an action of $\HU(h_n)$. 
Let $\mathbb{F}_{q,+}$ act on $X_n$ by 
$z \mapsto z+a$ for $a \in \mathbb{F}_{q,+}$. 
The cohomology of a variant of $X_n$ is studied in \cite[Lemma 3.6]{DudDLGG}. 

We consider the morphism 
\begin{equation}\label{pi1}
\pi \colon \mathbb{A}_{\mathbb{F}_q}^n 
\to \mathbb{A}_{\mathbb{F}_q}^1;\
(x_i)_{1 \leq i \leq n} \mapsto \sum_{i=1}^n x_i^{q+1}. 
\end{equation}
Since we have a cartesian diagram
\[
\xymatrix{
X_n \ar[rr] \ar[d] & &
\mathbb{A}_{\mathbb{F}_q}^n \ar[d]^{\pi} \\
\mathbb{A}_{\mathbb{F}_q}^1 \ar[rr]^-{z \mapsto z^q +z} & & \mathbb{A}_{\mathbb{F}_q}^1 
}
\]
we obtain an isomorphism 
\begin{equation}\label{XLpsi}
 H_{\rm c}^i(X_{n,\overline{\mathbb{F}}_q},\Lambda)[\psi] 
 \simeq 
 H_{\rm c}^i(\mathbb{A}^n,\pi^\ast \mathscr{L}_{\psi}) 
\end{equation}
for $i \geq 0$ by using the proper base change theorem and taking $\psi$-parts. 

We prepare a lemma to treat the case where 
$(n,q)=(2,2)$. 
We put $\iota=
\begin{pmatrix}
0 & 1 \\
1 & 0
\end{pmatrix} \in \oU (h_2) 
$. 

\begin{lem}\label{pre}
Assume that $\Lambda =\ol{\bQ}_{\ell}$ and $q=2$. 
We have 
\[
 \Tr( \iota; 
 H_{\rm c}^2(X_{2,\overline{\mathbb{F}}_q},\overline{\mathbb{Q}}_{\ell})[\psi])
 =-2. 
\] 
\end{lem}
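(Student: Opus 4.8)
The plan is to transport the computation to affine space through the isomorphism \eqref{XLpsi} and then apply the K\"{u}nneth formula, using that $\iota$ acts by interchanging the two coordinates. Since $\iota^2=1$, the action convention $(\iota^\ast)^{-1}=\iota^\ast$ causes no sign ambiguity, and $\iota$ induces the involution $\sigma\colon (x_1,x_2)\mapsto (x_2,x_1)$ on $\mathbb{A}^2$, which commutes with $\pi$. Hence \eqref{XLpsi} gives
\[
 \Tr\bigl(\iota;H_{\rm c}^2(X_{2,\overline{\mathbb{F}}_q},\overline{\mathbb{Q}}_{\ell})[\psi]\bigr)
 =\Tr\bigl(\sigma^\ast;H_{\rm c}^2(\mathbb{A}^2,\pi^\ast\mathscr{L}_{\psi})\bigr).
\]

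Next I would use that $\mathscr{L}_{\psi}$ is an Artin--Schreier sheaf, hence multiplicative for addition: from $\pi=x_1^{q+1}+x_2^{q+1}$ one obtains an $\sigma$-equivariant identification
\[
 \pi^\ast\mathscr{L}_{\psi}\simeq \mathscr{L}_{\psi}(x^{q+1})\boxtimes\mathscr{L}_{\psi}(x^{q+1}).
\]
The K\"{u}nneth formula then yields
\[
 H_{\rm c}^2(\mathbb{A}^2,\pi^\ast\mathscr{L}_{\psi})\simeq V\otimes V,\qquad
 V:=H_{\rm c}^1(\mathbb{A}^1,\mathscr{L}_{\psi}(x^{q+1})),
\]
since by Lemma \ref{tri} together with \eqref{eq:CGm} the cohomology $H_{\rm c}^\ast(\mathbb{A}^1,\mathscr{L}_{\psi}(x^{q+1}))$ is concentrated in degree $1$, where it has dimension $q=2$; thus only the bidegree $(1,1)$ contributes to $H_{\rm c}^2$.

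Finally, under this K\"{u}nneth identification $\sigma^\ast$ becomes the \emph{graded} transposition of the two tensor factors. As both factors sit in odd cohomological degree $1$, the Koszul sign rule contributes a factor $(-1)^{1\cdot 1}=-1$, so $\sigma^\ast$ acts as $-\tau$, where $\tau(a\otimes b)=b\otimes a$ is the naive swap on $V\otimes V$. Because $\Tr(\tau\mid V\otimes V)=\dim V$, I conclude
\[
 \Tr\bigl(\sigma^\ast;H_{\rm c}^2(\mathbb{A}^2,\pi^\ast\mathscr{L}_{\psi})\bigr)
 =-\dim V=-2.
\]
The one genuinely delicate point is the Koszul sign in this last step: the whole answer turns on the fact that the geometric transposition, expressed through K\"{u}nneth in terms of the degree-$1$ factors, carries the sign $-1$ rather than $+1$. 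This is precisely what forces the trace to equal $-2$ instead of $+2$, and I would verify it against the standard compatibility of the K\"{u}nneth isomorphism with the interchange of factors.
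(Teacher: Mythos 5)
Your proof is correct and is essentially the paper's own argument: the paper likewise reduces via \eqref{XLpsi} and the K\"{u}nneth formula to the two-fold tensor product of a $2$-dimensional space in cohomological degree $1$ (there written as $H_{\rm c}^1(C_{\overline{\mathbb{F}}_q},\overline{\mathbb{Q}}_{\ell})[\psi]$, which is canonically your $V=H_{\rm c}^1(\mathbb{A}^1,\mathscr{L}_{\psi}(x^{q+1}))$), on which $\iota$ acts by $e_i\otimes e_j\mapsto -e_j\otimes e_i$. The minus sign you justify by the Koszul sign in the K\"{u}nneth swap is exactly the paper's appeal to anti-commutativity of cup products, giving the trace $-\dim V=-2$.
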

\begin{proof}
Let $e_1, e_2$ be a basis of 
$H_{\rm c}^1(C_{\overline{\mathbb{F}}_q},\overline{\mathbb{Q}}_{\ell})[\psi]$. 
Using the morphism 
\[
 \mathrm{pr}_i \colon \mathbb{A}_{\mathbb{F}_q}^2 
\to \mathbb{A}_{\mathbb{F}_q}^1;\
(x_1,x_2) \mapsto x_i 
\]
for $1 \leq i \leq 2$, we have an isomorphism 
\[
 \pi^* \mathscr{L}_{\psi} \simeq  \mathrm{pr}_1^* \bigl(\mathscr{L}_{\psi}(x^{q+1})\bigr) \otimes \mathrm{pr}_2^* \bigl(\mathscr{L}_{\psi}(x^{q+1})\bigr)
\]
of sheaves on $\mathbb{A}_{\mathbb{F}_q}^2$. 
Hence we have an isomorphism 
\begin{equation}\label{eq:H2XCC}
 H_{\rm c}^2(X_{2,\overline{\mathbb{F}}_q},\overline{\mathbb{Q}}_{\ell})[\psi] 
 \simeq 
 H_{\rm c}^1(C_{\overline{\mathbb{F}}_q},\overline{\mathbb{Q}}_{\ell})[\psi] 
 \otimes 
 H_{\rm c}^1(C_{\overline{\mathbb{F}}_q},\overline{\mathbb{Q}}_{\ell})[\psi] 
\end{equation}
given by 
\eqref{XLpsi}, the K\"{u}nneth formula in 
\cite[Sommes trig., (2.4.1)*]{DelCoet} and the first isomorphism in \eqref{eq:CGm}. 
Under the isomorphism \eqref{eq:H2XCC}, 
the action of $\iota$ is 
$e_i \otimes e_j \mapsto -e_j \otimes e_i$ 
for $1 \leq i,j \leq 2$, 
where the minus sign appears 
because of the anti-commutativity of cup products. 
Hence we have the claim. 
\end{proof}

\begin{thm}\label{old}
Assume that $\Lambda =\ol{\bQ}_{\ell}$. 
We have 
$H_{\rm c}^i(X_{n,\overline{\mathbb{F}}_q},\overline{\mathbb{Q}}_{\ell})[\psi]=0$ for $i \neq n$ and 
$\dim H_{\rm c}^n(X_{n,\overline{\mathbb{F}}_q},\overline{\mathbb{Q}}_{\ell})[\psi]=q^n$. 
Further, we have an isomorphism 
\[
 H_{\rm c}^{n}(X_{n,\overline{\mathbb{F}}_q},\overline{\mathbb{Q}}_{\ell})[\psi] \simeq \rho_{\HU(h_n),\psi} 
\]
as $\HU(h_n)$-representations. 
\end{thm}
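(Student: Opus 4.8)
The plan is to prove the three assertions in turn: the vanishing and the dimension count by a Künneth reduction to the curve~$C$, the restriction to the Heisenberg group by Stone--von Neumann, and finally the isomorphism as $\mathit{HU}(h_n)$-representations by invoking the characterization in Lemma~\ref{lem:HWcha}. First I would settle the numerical part. By the isomorphism \eqref{XLpsi} it suffices to compute $H_{\rm c}^i(\mathbb{A}^n,\pi^\ast\mathscr{L}_\psi)$. Since $\pi((x_k))=\sum_k x_k^{q+1}$ is additive in the coordinates and $\mathscr{L}_\psi$ is an Artin--Schreier sheaf, the additivity $\mathscr{L}_\psi(a+b)\simeq\mathscr{L}_\psi(a)\otimes\mathscr{L}_\psi(b)$ gives $\pi^\ast\mathscr{L}_\psi\simeq\boxtimes_{k=1}^n\mathscr{L}_\psi(x_k^{q+1})$ on $\mathbb{A}^n=(\mathbb{A}^1)^n$. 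The Künneth formula then yields $H_{\rm c}^\ast(\mathbb{A}^n,\pi^\ast\mathscr{L}_\psi)\simeq\bigotimes_{k=1}^n H_{\rm c}^\ast(\mathbb{A}^1,\mathscr{L}_\psi(x_k^{q+1}))$, and by \eqref{eq:CGm} together with Lemma~\ref{tri}\,\ref{enu:H1C} each factor is $H_{\rm c}^\ast(C_{\overline{\mathbb{F}}_q},\overline{\mathbb{Q}}_\ell)[\psi]$, concentrated in degree $1$ of dimension $q$. Hence $H_{\rm c}^i(X_n)[\psi]$ vanishes for $i\neq n$ and has dimension $q^n$ for $i=n$.

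Next I would determine the restriction to $H(h_n)$. The centre $Z=\{0\}\times\mathbb{F}_{q,+}$ acts on $X_n$ precisely through the $\mathbb{F}_{q,+}$-shift on the coordinate $z$, so it acts on $H_{\rm c}^n(X_n)[\psi]$ by the scalar $\psi$. By the Stone--von Neumann theorem every $H(h_n)$-representation with central character $\psi$ is a sum of copies of the irreducible $\rho_{\psi,H(h_n)}$, whose dimension is $q^n$; as our space has dimension exactly $q^n$, its restriction to $H(h_n)$ is $\rho_{\psi,H(h_n)}$. In particular $H_{\rm c}^n(X_n)[\psi]$ is already irreducible as an $\mathit{HU}(h_n)$-representation and is an extension of $\rho_{\psi,H(h_n)}$.

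It then remains to match the two extensions. Since $U(h_n)$ fixes $Z$ pointwise it preserves $\psi$, so $\rho_{\psi,H(h_n)}$ is $U(h_n)$-stable and the extensions of it to $\mathit{HU}(h_n)$ form a torsor under $\Hom(U(h_n),\overline{\mathbb{Q}}_\ell^\times)$ with basepoint the representation $\rho_{\psi,\mathit{HU}(h_n)}$ of Lemma~\ref{lem:HWcha}. Thus $H_{\rm c}^n(X_n)[\psi]\simeq\rho_{\psi,\mathit{HU}(h_n)}\otimes\eta$ for a unique character $\eta$, and it suffices to prove $\eta=1$. When $SU(h_n)$ is perfect every such $\eta$ is of the form $\chi\circ\det$ with $\chi\in\mu_{q+1}^\vee$, so a single trace suffices: for $g=\mathrm{diag}(\zeta,1,\dots,1)$ with $\zeta\in\mu_{q+1}\setminus\{1\}$ the action respects the coordinate decomposition, whence by Künneth and Lemma~\ref{tri}\,\ref{enu:H1C} the trace is $\bigl(\sum_{\chi\neq1}\chi(\zeta)\bigr)q^{n-1}=-q^{n-1}$, matching $(-1)^n(-q)^{N(g)}=-q^{n-1}$; as $\det g=\zeta$ runs through all of $\mu_{q+1}$ this forces $\chi=1$.

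The hard part will be the small exceptional cases in which $SU(h_n)(\mathbb{F}_q)$ is not perfect and $\Hom(U(h_n),\overline{\mathbb{Q}}_\ell^\times)$ is strictly larger than the characters factoring through $\det$, so that the diagonal element above no longer pins down $\eta$. The representative case is $(n,q)=(2,2)$, where $SU(h_2)(\mathbb{F}_2)\cong S_3$ contributes an extra sign character detected by the transposition $\iota$; here the needed value $\Tr(\iota)=-2=(-1)^2(-q)^{N(\iota)}$ is computed directly from the Künneth decomposition and the anticommutativity of the cup product, which is exactly Lemma~\ref{pre}. Disposing of the remaining exceptional twists by analogous explicit evaluations would complete the proof that $\eta=1$, and hence the identification $H_{\rm c}^n(X_n)[\psi]\simeq\rho_{\psi,\mathit{HU}(h_n)}$.
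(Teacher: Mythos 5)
Your proposal is correct and follows essentially the same route as the paper's proof: the Künneth reduction to the curve $C$ for the vanishing and the dimension count, Stone--von Neumann for the restriction to $H(h_n)$, Schur's lemma to reduce the comparison to a character twist of $\rho_{\psi,\mathit{HU}(h_n)}$, traces on diagonal elements to eliminate twists by $\chi \circ \det$, and the explicit evaluation of $\Tr(\iota)$ (Lemma \ref{pre}) for $(n,q)=(2,2)$. The only differences are cosmetic: you test on $g=\mathrm{diag}(\zeta,1,\dots,1)$ where the paper uses $\mathrm{diag}(\zeta_1,\dots,\zeta_n)$ with all $\zeta_i \neq 1$ (both computations are valid), and where you defer the ``remaining exceptional twists,'' the paper settles them by citation: by G\'erardin and Ennola, $(2,2)$ is the \emph{only} case in which characters of $U(h_n)$ fail to factor through $\det$ --- in particular for $(2,3)$, where $\mathit{SU}_2(3)$ is not perfect, no further trace evaluation is needed.
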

\begin{proof}
The first assertion follows from \eqref{XLpsi}, the 
K\"{u}nneth formula, Lemma \ref{tri} \ref{enu:H1C} and the first isomorphism in \eqref{eq:CGm} in the same way as the proof of Lemma \ref{pre}. 
By the first assertion, 
$H_{\rm c}^{n}(X_{n,\overline{\mathbb{F}}_q},\overline{\mathbb{Q}}_{\ell})[\psi]$
is isomorphic to $\rho_{\oH(h_n),\psi}$ as $\oH(h_n)$-representations by the Stone--von Neumann theorem. 
We write $\det$ for the composite 
$\HU(h_n) \xrightarrow{\mathrm{pr}} 
\oU (h_n) \xrightarrow{\det} \mu_{q+1}$. 

Assume that $q$ is odd. 
By \cite[(1) in the proof of Theorem 3.3]{GerWeil}, 
the finite special unitary group 
$\SU_n(\bF_q)$ is perfect except for 
$(n,q)=(2,3)$. Hence, if $(n,q) \neq (2,3)$, 
any character of $\oU (h_n)$
factors through $\det$. 
Even if $(n,q)=(2,3)$, 
any character of $\oU (h_n)$
does by \cite[the table in p.28]{EnnChar}. 

Assume that $q$ is even. 
By \cite[(8) in the proof of Theorem 3.3]{GerWeil}, 
any character of $\oU (h_n)$
factors through $\det$ except for $(n,q)=(2,2)$. 
Further, assume that $(n,q)=(2,2)$. 
By \cite[(5) in the proof of Theorem 3.3]{GerWeil}, 
the unitary group $\oU (\bF_4^2,h_2)$
is isomorphic to $\mathbb{Z}/3\mathbb{Z} \times \mathfrak{S}_3$. 
Let $\mathrm{sgn} \colon \mathfrak{S}_3 \to 
\overline{\mathbb{Q}}_{\ell}^{\times}$ be the sign character. 
Then the character group of 
$\oU (\bF_4^2,h_2)$ is generated by 
$\det$ and 
\[
 \chi_2 \colon 
\mathbb{Z}/3\mathbb{Z} \times \mathfrak{S}_3 \xrightarrow{\mathrm{pr}_2}
 \mathfrak{S}_3 \xrightarrow{\mathrm{sgn}}
 \overline{\mathbb{Q}}_{\ell}^{\times} 
\]
(\cf \cite[the table in p.28]{EnnChar}). 

By Schur's lemma, there exists a character $\chi \in \mu_{q+1}^{\vee}$ 
such that 
\[
 H_{\rm c}^{n}(X_{n,\overline{\mathbb{F}}_q},\overline{\mathbb{Q}}_{\ell})[\psi] \simeq 
 \begin{cases}
 \rho_{\HU(h_n),\psi} \otimes (\chi \circ \det) & 
 \textrm{if $(n,q) \neq (2,2)$}, \\ 
 \rho_{\HU(h_n),\psi} \otimes (\chi \circ \det) \otimes \chi_2^j & 
 \textrm{for some $0 \leq j \leq 1$ if $(n,q) = (2,2)$}. 
 \end{cases}
\]
Let $g=\mathrm{diag}(\zeta_1,\ldots,\zeta_n) \in \oU (h_n)$
with $\zeta_i \in \mu_{q+1} \setminus \{1\}$.  
By the K\"{u}nneth formula and Lemma \ref{tri} \ref{enu:H1C}, we have 
\begin{equation}\label{det}
\Tr (g; H_{\rm c}^{n}(X_{n,\overline{\mathbb{F}}_q},\overline{\mathbb{Q}}_{\ell})[\psi])=(-1)^n. 
\end{equation}
By Lemma \ref{lem:HWcha} and \eqref{det}, we have 
\begin{align*}
(-1)^n\chi(\zeta_1 \cdots \zeta_n)=
\Tr \rho_{\HU(h_n),\psi}(g) \chi(\det(g))=\Tr (g; H_{\rm c}^{n}(X_{n,\overline{\mathbb{F}}_q},\overline{\mathbb{Q}}_{\ell})[\psi])
=(-1)^n. 
\end{align*}
Hence, we have 
$\chi(\zeta_1 \cdots \zeta_n)=1$ for 
any $\zeta_i \in \mu_{q+1} \setminus \{1\}$. 
This implies $\chi=1$. 
If $(n,q) = (2,2)$, 
by Lemma \ref{lem:HWcha} and Lemma \ref{pre}, we have 
\[
 -2 (-1)^j
 = \Tr \rho_{\HU(h_n),\psi}(\iota) \chi(\det(\iota)) \chi_2(\iota)^j 
 = \Tr (\iota; H_{\rm c}^{n}(X_{n,\overline{\mathbb{F}}_q},\overline{\mathbb{Q}}_{\ell})[\psi]) 
 = -2, 
\]
which implies $j=0$. 
\end{proof}

We give a relation with mod $\ell$-cohomology. 

\begin{prop}\label{prop:Xmodl}
Let the notations be as in Lemma \ref{tri}. 
\begin{enumerate}
\item\label{enu:HXfree} 
The $\ol{\bZ}_{\ell}$-module 
$H_{\rm c}^n(X_{n,\overline{\mathbb{F}}_q},\ol{\bZ}_{\ell})[\psi]$ is free and 
$H_{\rm c}^i(X_{n,\overline{\mathbb{F}}_q},\ol{\bZ}_{\ell})[\psi]=0$ for 
$i \neq n$. 
\item\label{enu:Xmodl}
We have an isomorphism 
\[
 (H_{\rm c}^i(X_{n,\overline{\mathbb{F}}_q},\ol{\bZ}_{\ell})[\psi]) \otimes_{\ol{\bZ}_{\ell}} \ol{\bF}_{\ell} \xrightarrow{\sim}
H_{\rm c}^i(X_{n,\overline{\mathbb{F}}_q},\ol{\bF}_{\ell})[\overline{\psi}]
\] 
as $\ol{\bF}_{\ell}[\HU(h_n)]$-modules 
for $i \geq 0$. 
\end{enumerate}
\end{prop}
\begin{proof}
These assertions follow 
from the K\"{u}nneth formula, and 
Lemma \ref{tri} \ref{enu:HCfree} and \ref{enu:Cmodl}, respectively. 
\end{proof}

The $\ol{\bZ}_{\ell}[\HU(h_n)]$-module 
$H_{\rm c}^n(X_{n,\overline{\mathbb{F}}_q},\ol{\bZ}_{\ell})[\psi]$ is a 
$\ol{\bZ}_{\ell}[\HU(h_n)]$-lattice in 
$H_{\rm c}^n(X_{n,\overline{\mathbb{F}}_q},\overline{\mathbb{Q}}_{\ell})[\psi]$ by Proposition \ref{prop:Xmodl} \ref{enu:HXfree}. 
By Proposition \ref{prop:Xmodl} \ref{enu:Xmodl}, 
the $\ol{\bF}_{\ell}[\HU(h_n)]$-module 
$H_{\rm c}^n(X_{n,\overline{\mathbb{F}}_q},\ol{\bF}_{\ell})[\overline{\psi}]$
is regarded as a mod $\ell$ version of 
a Heisenberg--Weil representation of a unitary group. 

\section{Another coordinate}\label{sec:An}
We give a construction of the Heisenberg--Weil representation of a finite unitary group of even degree using another coordinate. 
Let $\psi \in \mathbb{F}_q^{\vee} \setminus \{1\}$ in this section. 

\subsection{Cohomology of a surface}\label{srf}
Let $X$ be the affine smooth surface over 
$\mathbb{F}_q$ defined by
\begin{equation}\label{xd}
 z^q-z=x y^q -x^q y 
\end{equation}
in $\mathbb{A}_{\mathbb{F}_q}^3=\Spec \mathbb{F}_q[x,y,z]$. 
Let 
$\mathbb{F}_{q^2}^{\times}$ act on $X_{\overline{\bF}_q}$ by 
$(x,y,z) \mapsto (\zeta^{-1} x,\zeta^{q} y ,z)$ for $\zeta \in \mathbb{F}_{q^2}^{\times}$. 
Let $\mathrm{Fr}_q \in \Gal (\overline{\mathbb{F}}_q/\mathbb{F}_q)$ be the geometric 
Frobenius automorphism defined by $x \mapsto x^{q^{-1}}$ for $x \in \overline{\mathbb{F}}_q$. 
When we consider a closed subscheme of a variety, we suppose that it 
is equipped with the reduced scheme structure. 

Let $\mathbb{F}_{q^2}^{\times} \rtimes \mathrm{Fr}_q^{\mathbb{Z}}$ 
be the semidirect product under the natural action. 
We sometimes regard $(\mathbb{F}_q^{\times})^{\vee}$ as a subset of 
$(\mathbb{F}_{q^2}^{\times})^{\vee}$ via the norm map. 
For $\chi \in (\mathbb{F}_{q}^{\times})^{\vee} \subset (\mathbb{F}_{q^2}^{\times})^{\vee}$, 
we define 
$\wt{\chi} \in (\mathbb{F}_{q^2}^{\times} \rtimes 
 \mathrm{Fr}_q^{\mathbb{Z}})^{\vee}$ by 
$\wt{\chi}(\zeta,\mathrm{Fr}_q^m)=\chi (\zeta)$ for 
$\zeta \in \mathbb{F}_{q^2}^{\times}$ and $m \in \bZ$. 
For $\chi \in (\mathbb{F}_{q^2}^{\times})^{\vee} \setminus (\mathbb{F}_q^{\times})^{\vee}$, 
we define 
$\wt{\chi} \in (\mathbb{F}_{q^2}^{\times} \rtimes 
 \mathrm{Fr}_q^{2\mathbb{Z}})^{\vee}$ by 
$\wt{\chi}(\zeta,\mathrm{Fr}_q^{2m})=\chi (\zeta)$ for 
$\zeta \in \mathbb{F}_{q^2}^{\times}$ and $m \in \bZ$, 
and put 
\[
 \pi_{\chi} =\Ind_{\mathbb{F}_{q^2}^{\times} \rtimes 
 \mathrm{Fr}_q^{2\mathbb{Z}}}^{\mathbb{F}_{q^2}^{\times} \rtimes 
 \mathrm{Fr}_q^{\mathbb{Z}}} \wt{\chi}. 
\]
For $\chi , \chi' \in (\mathbb{F}_{q^2}^{\times})^{\vee} \setminus (\mathbb{F}_q^{\times})^{\vee}$, we have 
$\pi_{\chi} \simeq \pi_{\chi'}$ 
if and only if 
$\chi$ and $\chi'$ are equal in 
$( (\mathbb{F}_{q^2}^{\times})^{\vee} \setminus 
 (\mathbb{F}_q^{\times})^{\vee} ) / \mathrm{Fr}_q^{\mathbb{Z}}$. 

\begin{lem}\label{ke}
We have an isomorphism 
\[
 H_{\rm c}^2(X_{\overline{\mathbb{F}}_q},\overline{\mathbb{Q}}_{\ell})(1)[\psi]
 \simeq 
 1 \oplus \bigoplus_{\chi \in (\mathbb{F}_q^{\times})^{\vee}}
 \wt{\chi} \oplus 
 \bigoplus_{\chi \in ( (\mathbb{F}_{q^2}^{\times})^{\vee} 
 \setminus (\mathbb{F}_q^{\times})^{\vee} ) / \mathrm{Fr}_q^{\mathbb{Z}}} 
 \pi_{\chi} 
\]
as $\mathbb{F}_{q^2}^{\times} \rtimes 
 \mathrm{Fr}_q^{\mathbb{Z}}$-representations. 
\end{lem}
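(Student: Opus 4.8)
The plan is to compute the $\psi$-isotypic cohomology of $X$ as the cohomology of an Artin--Schreier sheaf on the affine plane, and to read off both the $\mathbb{F}_{q^2}^\times$-action and the Frobenius action from an explicit fibration. Write $f(x,y)=xy^q-x^qy$ and let $\pi\colon X\to\mathbb{A}^2$ forget $z$. By the same Artin--Schreier descent as in \eqref{XLpsi}, the $\mathbb{F}_{q,+}$-action $z\mapsto z+a$ gives an $(\mathbb{F}_{q^2}^\times\rtimes\mathrm{Fr}_q^{\mathbb{Z}})$-equivariant isomorphism $H_{\rm c}^\bullet(X_{\overline{\mathbb{F}}_q},\overline{\mathbb{Q}}_\ell)[\psi]\simeq H_{\rm c}^\bullet(\mathbb{A}^2,\mathscr{L}_\psi(f))$; here one checks directly that $f$ is invariant under $(x,y)\mapsto(\zeta^{-1}x,\zeta^q y)$ and is defined over $\mathbb{F}_q$, so the whole group acts. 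First I would split off the locus $Z=\{x=0\}\simeq\mathbb{A}^1_y$, on which $f\equiv 0$: it contributes $H_{\rm c}^2(\mathbb{A}^1,\overline{\mathbb{Q}}_\ell)(1)$, the trivial representation, since the torus acts trivially on the top cohomology of $\mathbb{A}^1$ and geometric Frobenius acts by $1$ after the Tate twist. This accounts for the summand $1$.

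The heart is the open complement $U=\{x\neq 0\}\simeq\mathbb{G}_{\mathrm{m}}\times\mathbb{A}^1$. Over $U$ the substitution $y=xw$ turns $f$ into $x^{q+1}(w^q-w)$, and translation $w\mapsto w+c$ for $c\in\mathbb{F}_q$ defines a global $\mathbb{F}_q$-action making $\Phi\colon U\to\mathbb{G}_{\mathrm{m}}\times\mathbb{A}^1_s$, $s=f$, a finite étale Galois cover with group $\mathbb{F}_q$. I would decompose $\Phi_\ast\overline{\mathbb{Q}}_\ell=\bigoplus_{\eta\in\mathbb{F}_q^\vee}\mathcal{M}_\eta$, and the model $s=x^{q+1}(w^q-w)$ identifies $\mathcal{M}_\eta$ with the Artin--Schreier sheaf $\mathscr{L}_\eta(s\,x^{-(q+1)})$, so that $H_{\rm c}^\bullet(U,\mathscr{L}_\psi(f))=\bigoplus_\eta H_{\rm c}^\bullet(\mathbb{G}_{\mathrm{m}}\times\mathbb{A}^1_s,\mathscr{L}_\psi(s)\otimes\mathcal{M}_\eta)$. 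The term $\eta=1$ vanishes, since $\mathscr{L}_\psi(s)$ has no cohomology on $\mathbb{A}^1_s$ and $\mathbb{G}_{\mathrm{m}}$ contributes nothing after Künneth. For $\eta=\psi(b\,\cdot)$ with $b\neq 0$ the sheaf is $\mathscr{L}_\psi\bigl(s(1+bx^{-(q+1)})\bigr)$; fibering over $x$, the fibre sheaf on $\mathbb{A}^1_s$ is a nontrivial Artin--Schreier sheaf, hence acyclic, unless $x^{q+1}=-b$, where it is trivial and gives $\overline{\mathbb{Q}}_\ell(-1)$ in degree $2$. Thus $R\mathrm{pr}_{1!}$ is a skyscraper on the finite set $\{x^{q+1}=-b\}$, and $H_{\rm c}^i(U)[\psi]=0$ for $i\neq 2$. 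Collecting over $b\in\mathbb{F}_q^\times$, the support locus is exactly $\{x:x^{q+1}\in\mathbb{F}_q^\times\}=\mathbb{F}_{q^2}^\times$, so $H_{\rm c}^2(U,\mathscr{L}_\psi(f))(1)$ acquires a basis $\{v_{x_0}\}_{x_0\in\mathbb{F}_{q^2}^\times}$.

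Next I would read off the equivariant structure. From $\zeta D_c\zeta^{-1}=D_{c\zeta^{q+1}}$ (with $D_c\colon(x,y)\mapsto(x,y+cx)$) one gets that $\zeta\in\mathbb{F}_{q^2}^\times$ sends $v_{x_0}$ to $v_{\zeta^{-1}x_0}$, while geometric Frobenius permutes the support points by $x_0\mapsto x_0^q$. Hence $H_{\rm c}^2(U)(1)$ is the permutation module $\overline{\mathbb{Q}}_\ell[\mathbb{F}_{q^2}^\times]$ for $\mathbb{F}_{q^2}^\times\rtimes\mathrm{Fr}_q^{\mathbb{Z}}$. Passing to the eigenlines $e_\chi=\sum_{x_0}\chi(x_0)v_{x_0}$, on which $\mathrm{Fr}_q$ acts by $e_\chi\mapsto e_{\chi^q}$, gives $\wt{\chi}$ for $\chi\in(\mathbb{F}_q^\times)^\vee$ (where $\chi^q=\chi$) and $\pi_\chi=\Ind\,\wt{\chi}$ for the two-element orbits $\{\chi,\chi^q\}$ in the complement, which are precisely the summands in the statement other than $1$.

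The one genuinely delicate point is that the localization sequence $0\to H_{\rm c}^2(U)(1)\to H_{\rm c}^2(\mathbb{A}^2)(1)[\psi]\to H_{\rm c}^2(Z)(1)\to 0$ must split $\mathrm{Fr}_q$-equivariantly: the trivial $\mathbb{F}_{q^2}^\times$-character occurs in both outer terms, so a priori Frobenius could glue them into a non-split unipotent extension rather than the two honest trivial summands $1$ and $\wt{1}$. To exclude this I would show $\mathrm{Fr}_q^{\,2}=\mathrm{id}$ on the middle term. On the associated graded it is the identity (the support points lie in $\mathbb{F}_{q^2}^\times$, so $x_0\mapsto x_0^{q^2}=x_0$), whence $\mathrm{Fr}_q^{\,2}-1$ is nilpotent; and $\mathrm{Fr}_q^{\,2}=\mathrm{Fr}_{q^2}$ is semisimple, because over $\mathbb{F}_{q^2}$ a linear change of coordinates diagonalizing $xy^q-x^qy$ gives $X_{\mathbb{F}_{q^2}}\simeq X_2$, so by Theorem \ref{old}, the Künneth formula and Lemma \ref{tri} the space $H_{\rm c}^2(X)(1)[\psi]$ is, as a Frobenius module, a sum of Gauss-sum eigenlines. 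A nilpotent semisimple operator is zero, so $\mathrm{Fr}_q^{\,2}=1$, the module factors through the finite group $\mathbb{F}_{q^2}^\times\rtimes\mathrm{Gal}(\mathbb{F}_{q^2}/\mathbb{F}_q)$, the sequence splits, and adding the contribution $1$ of $Z$ completes the identification. I expect this Frobenius-semisimplicity step, together with the bookkeeping of signs and character normalizations in the identification of $\mathcal{M}_\eta$, to be the main obstacle.
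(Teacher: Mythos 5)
Your proposal is correct, but it takes a genuinely different route from the paper's proof. The paper never stratifies the plane: it substitutes $w=z+xy^q$ to rewrite $X$ as $w^q-w=x^q(y^{q^2}-y)$, passes to $w^q-w=x(y^{q^2}-y)$ by a purely inseparable map (which does not change cohomology), and then decomposes along the Artin--Schreier covering $(x,y)\mapsto (x,y^{q^2}-y)$ with deck group $\mathbb{F}_{q^2}$; the projection formula gives a single, globally equivariant direct sum $\bigoplus_{\alpha\in\mathbb{F}_{q^2}}H_{\rm c}^2(\mathbb{A}^2,\mathscr{L}_{\psi}((x+\alpha)y))$, and each summand is $\overline{\mathbb{Q}}_{\ell}(-1)$ by Laumon's $\ell$-adic Fourier transform \cite[Proposition (1.2.2.2)]{LauTFcW}. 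Since $\mathbb{F}_{q^2}^{\times}\rtimes\mathrm{Fr}_q^{\mathbb{Z}}$ permutes the indices $\alpha$ (by multiplication and by Galois), the whole space is at once the permutation module on $\mathbb{F}_{q^2}$, i.e.\ $1\oplus\overline{\mathbb{Q}}_{\ell}[\mathbb{F}_{q^2}^{\times}]$, and no extension problem arises: the $\alpha$-decomposition is a direct sum of deck-group isotypic components, not a filtration coming from an open--closed decomposition. Your argument trades the Fourier transform for elementary inputs (proper base change and acyclicity of nontrivial Artin--Schreier sheaves on $\mathbb{A}^1$) and makes the permutation structure concrete as skyscraper contributions at the points of $\mu_{q^2-1}$; the price is exactly the possible non-split unipotent extension you isolate, a step that simply does not occur in the paper. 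Your resolution of it is sound: $\mathrm{Fr}_q^2-1$ is nilpotent since it kills sub and quotient, while $\mathrm{Fr}_q^2=\mathrm{Fr}_{q^2}$ is semisimple because $X$ is the surface $X_2'$ of Section \ref{sec:An}, so $X_{\mathbb{F}_{q^2}}\simeq X_2$ over $\mathbb{F}_{q^2}$ by Remark \ref{rem:form}, and by \eqref{XLpsi}, the K\"unneth formula and Lemma \ref{tri} the $\psi$-part of its cohomology is a sum of one-dimensional $\mathrm{Fr}_{q^2}$-stable lines; hence the action factors through the finite group $\mathbb{F}_{q^2}^{\times}\rtimes\mathrm{Gal}(\mathbb{F}_{q^2}/\mathbb{F}_q)$ and the sequence splits. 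One point you should not dismiss as mere bookkeeping: you must check that $\mathrm{Fr}_q$ permutes your basis $\{v_{x_0}\}$ with coefficient exactly $+1$ (e.g.\ at an $\mathbb{F}_q$-rational support point, where the stalk is canonically $H_{\rm c}^2(\mathbb{A}^1_s)(1)$ with trivial Frobenius action), since a uniform sign here would twist every $\wt{\chi}$ by the character $\mathrm{Fr}_q\mapsto -1$ and alter the statement; the corresponding issue for the $\mathbb{F}_{q^2}^{\times}$-action is harmless because that action is simply transitive on the support, so any projective permutation action can be rectified to the honest one.
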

\begin{proof}
By changing a variable as $w=z+xy^q$ in \eqref{xd}, 
the surface 
$X$ is defined by $w^q-w=x^q(y^{q^2}-y)$. 
Let $S$ be the surface defined by $w^q-w=x(y^{q^2}-y)$. 
We have the finite purely inseparable map 
$X \to S;\ (w,x,y) \mapsto (w,x^q,y)$. 
Hence we have an isomorphism 
$H_{\rm c}^2(X_{\overline{\mathbb{F}}_q},\overline{\mathbb{Q}}_{\ell}) \simeq 
 H_{\rm c}^2(S_{\overline{\mathbb{F}}_q},\overline{\mathbb{Q}}_{\ell})$. 

Let $\pi \colon \mathbb{A}_{\mathbb{F}_{q^2}}^2 \to 
\mathbb{A}_{\mathbb{F}_{q^2}}^2;\ (x,y) \mapsto
(x,y^{q^2}-y)$. 
For any $\psi' \in \mathbb{F}_{q^2}^{\vee}$, there 
exists a unique element $\alpha \in \mathbb{F}_{q^2}$ such that $\psi'(x)=
\psi(\Tr_{\mathbb{F}_{q^2}/\mathbb{F}_q}(\alpha x))$ for any $x \in \mathbb{F}_{q^2}$. 
We have isomorphisms 
\begin{align*}
 H_{\rm c}^2(S_{\overline{\mathbb{F}}_q},\overline{\mathbb{Q}}_{\ell})[\psi]  
 & \simeq 
 H_{\rm c}^2(\mathbb{A}^2,\mathscr{L}_{\psi}(x(y^{q^2}-y)) 
 =H_{\rm c}^2(\mathbb{A}^2,\pi^\ast \mathscr{L}_{\psi}(xy)) \\ 
 & \simeq 
 H_{\rm c}^2(\mathbb{A}^2,\pi_{\ast} \pi^\ast \mathscr{L}_{\psi}(xy)) 
 \simeq 
 H_{\rm c}^2(\mathbb{A}^2,\mathscr{L}_{\psi}(xy) \otimes \pi_{\ast} 
 \overline{\mathbb{Q}}_{\ell}) \\
 & \simeq \bigoplus_{\psi' \in \mathbb{F}_{q^2}^{\vee}} 
 H_{\rm c}^2(\mathbb{A}^2, \mathscr{L}_{\psi}(xy) \otimes 
 \mathscr{L}_{\psi'}(y))
 \simeq 
 \bigoplus_{\alpha \in \mathbb{F}_{q^2}}
 H_{\rm c}^2(\mathbb{A}^2, \mathscr{L}_{\psi}((x+\alpha)y)), 
\end{align*}
where we use the projection formula at the fourth isomorphism. 
Let $\mathscr{F}_{\psi}$ denote the $\ell$-adic 
Fourier transformation on $\mathbb{A}^1$ associated to $\psi$ in 
\cite[D\'{e}finition (1.2.1.1)]{LauTFcW}.  
By \cite[Proposition (1.2.2.2)]{LauTFcW}, we have 
\[
 H_{\rm c}^2(\mathbb{A}^2, \mathscr{L}_{\psi}((x+\alpha)y)) \simeq H_{\rm c}^2(\mathbb{A}^2, \mathscr{L}_{\psi}(xy)) \simeq  H_{\rm c}^1(\mathbb{A}^1, \mathscr{F}_{\psi}(\overline{\mathbb{Q}}_{\ell}))
\simeq \overline{\mathbb{Q}}_{\ell}(-1). 
\]
Hence, we obtain the claim. 
\end{proof}

\subsection{Construction using another coordinate}\label{another}
Let $n$ be a positive integer. 
Let $V=\mathbb{F}_q^{2n}$. 
We consider the skew-hermitian 
form on 
$V_{\mathbb{F}_{q^2}}=\mathbb{F}_{q^2}^{2n}$ defined by 
\begin{equation}\label{df}
h_{2n}' \colon V_{\mathbb{F}_{q^2}} 
\times V_{\mathbb{F}_{q^2}}
\to \mathbb{F}_{q^2};\ 
((v_k),(v'_k)) \mapsto 
\sum_{k=1}^n\left( v_k^q v'_{n+k} -v_{n+k}^q v'_k  \right). 
\end{equation}
Let $X_{2n}'$ be the affine smooth variety over $\mathbb{F}_q$ defined by 
\[
 z- z^q =\sum_{k=1}^n (x_k^q x_{n+k} - x_{n+k}^q x_k ) 
\] 
in 
$\mathbb{A}_{\mathbb{F}_q}^{2n+1}=\Spec \mathbb{F}_q[x_1,\ldots,x_{2n},z]$. 
The group $\oH(V_{\mathbb{F}_{q^2}},h_{2n}')$ acts on $X_{2n}'$ 
by 
\[
X_{2n}' \to X_{2n}';\ ((x_k),z) \mapsto 
 \biggl( (x_k +v_k),z+a+\sum_{k=1}^n ( v_k^q x_{n+k} - v_{n+k}^q x_k ) \biggr) 
\]
for $(v,a) \in \oH(V_{\mathbb{F}_{q^2}},h_{2n}')$. 
The group $\oU (V_{\mathbb{F}_{q^2}},h_{2n}')$ acts on $X_{2n}'$ by $(x,z) \mapsto (g x ,z)$ for $g \in \oU (V_{\mathbb{F}_{q^2}},h_{2n}')$, 
where we regard $x=(x_k)$ as a column vector.  
Hence, $\HU(h_{2n}')$
acts on $X_{2n}'$. 
Let $\mathbb{F}_q$ act on $X_{2n}'$ by 
$z \mapsto z+a$ for $a \in \mathbb{F}_q$. 

\begin{rem}\label{rem:form}
Let $h_{2n}$ be the hermitian form on $V_{\mathbb{F}_{q^2}}=\bF_{q^2}^{2n}$ 
defined in \eqref{eq:hFq2}. 
Take $\xi \in \mathbb{F}_{q^2}$ 
such that $\xi^{q-1}=-1$. 
Then we have an isomorphism 
$f \colon (V_{\mathbb{F}_{q^2}},\xi h_{2n}) \to 
 (V_{\mathbb{F}_{q^2}},h_{2n}')$ as skew-hermitian forms. 
This induces isomorphisms 
$\oH(V_{\mathbb{F}_{q^2}},\xi h_{2n}) \simeq \oH(V_{\mathbb{F}_{q^2}},h_{2n}')$ 
and 
$\oU (V_{\mathbb{F}_{q^2}},\xi h_{2n}) 
 \simeq \oU (V_{\mathbb{F}_{q^2}},h_{2n}')$ 
by Remark \ref{rem:HH}. 
Further, $f$ and $z \mapsto \xi z$ gives an 
isomorphism $X_{2n} \simeq X_{2n,\mathbb{F}_{q^2}}'$ 
over $\mathbb{F}_{q^2}$, 
which is compatible with group actions 
under the above isomorphisms. 
\end{rem}

\begin{lem}\label{lem:F22}
We have 
$H_{\rm c}^i(X_{2n,\overline{\mathbb{F}}_q}',\overline{\mathbb{Q}}_{\ell})[\psi]=0$ for 
$i \neq 2n$ and 
$\dim H_{\rm c}^{2n}(X_{2n,\overline{\mathbb{F}}_q}',\overline{\mathbb{Q}}_{\ell})[\psi]=q^{2n}$. 
Further, 
we have an isomorphism 
\[
 H_{\rm c}^{2n}(X_{2n,\overline{\mathbb{F}}_q}',
 \overline{\mathbb{Q}}_{\ell})[\psi] \simeq \rho_{\HU(h_{2n}'),\psi} 
\]
as $\HU(h_{2n}')$-representations. 
\end{lem}
\begin{proof}
These follow from 
Theorem \ref{old} and 
Remark \ref{rem:form}. 
\end{proof}

\begin{lem}\label{lem:F2triv}
The action of $\mathrm{Fr}_q^2$ on 
$H_{\rm c}^{2n}(X_{2n,\overline{\mathbb{F}}_q}',\overline{\mathbb{Q}}_{\ell}(n))[\psi]$ is trivial. 
\end{lem}
\begin{proof}
This follows from Lemma \ref{ke} and the K\"unneth formula. 
\end{proof}

\section{Relation with Fermat variety}\label{sec:RelFer}

\subsection{Unipotent representations of unitary group}\label{reviewu}
Let $n$ be a positive integer. 
We follow \cite[\S1]{HoMaTT}. 
Let $G_n$ denote a general linear group 
$\GL_n$ over $\overline{\mathbb{F}}_q$. 
We consider the Frobenius endomorphism 
$F \colon G_n \to G_n;\ 
(x_{i,j}) \mapsto (x_{j,i}^q)^{-1}$. 
Let $\oU_n$ be the unitary algebraic group over $\bF_q$ 
defined by $h_n$. 
Then we have $G_n^F=\oU_n(\bF_q)=\oU (h_n)$. 

Let $T_0$ denote the $F$-stable maximal torus 
of $G_n$
consisting of diagonal matrices. We set 
$\mathcal{W}_n=N_{G_n}(T_0)/T_0$. Let $\mathscr{T}_n$ denote the set of $G_n^F$-conjugacy classes of $F$-stable maximal tori. 
The conjugacy classes of $\mathcal{W}_n$ is identified with 
$\mathscr{T}_n$ by 
$x^{-1}F(x) \mapsto x T_0 x^{-1}$ 
for $x^{-1}F(x) \in \mathcal{W}_n$. 
The Weyl group $\mathcal{W}_n$ is isomorphic to 
the symmetric group $\mathfrak{S}_n$. 
The set of conjugacy classes of $\mathfrak{S}_n$ 
is identified with the set of partitions of $n$, which we denote by $\Lambda_n$. 
We have the natural bijection 
\[
\Lambda_n \xrightarrow{\sim} \mathscr{T}_n;\ \rho \mapsto T(\rho) 
\]
such that $T((1^n))=T_0$. 

Let $T \in \mathscr{T}_n$ and 
$\theta \in (T^F)^{\vee}$. 
Let $R_T^{G_n}(\theta)$ denote the Deligne--Lusztig character in the notation of \cite[p.204]{LusFinunip} and \cite[Corollary 2.4]{LusRepChe}. 
Let $\chi^{\lambda}$ denote the irreducible character of $\mathfrak{S}_n$ corresponding to 
$\lambda \in \Lambda_n$ normalized such that 
$\chi^{\lambda}$ is the sign representation if $\lambda=(1^n)$. 
For $\lambda, \rho \in \Lambda_n$, 
let $\chi_{\rho}^{\lambda}$
denote the value of $\chi^{\lambda}$ at the class 
corresponding to $\rho$. 
Let $z_{\rho}$ be the cardinality of the centralizer of 
the class in $\mathfrak{S}_n$ corresponding to $\rho$. 
For $\lambda \in \Lambda_n$, we define a class function $\psi^{\lambda}$ on $G_n^F$ by 
\begin{equation}\label{class}
\psi^{\lambda}=\sum_{\rho \in \Lambda_n} \frac{\chi_{\rho}^{\lambda}}{z_{\rho}} R_{T(\rho)}^{G_n}(1_{T(\rho)^F}). 
\end{equation}
We follow the definition of unipotency in \cite[Definition 7.8]{DeLuRep}. 
By \cite[\S 2]{LuSrChar}, 
the set $\{\psi^{\lambda}\}_{\lambda \in \Lambda_n}$
equals the set of all unipotent characters of $G_n^F$ up to sign.

\subsection{Geometric relation}\label{ssec:Grel}
Let $\Lambda \in \{ \ol{\bQ}_{\ell}, \ol{\bF}_{\ell} \}$ 
and $\psi \in \Hom (\bF_{q,+}, \Lambda^{\times}) \setminus \{1\}$. 
Let $\pi$ be as in \eqref{pi1}. 
We put $U=\pi^{-1}(\mathbb{G}_{\mathrm{m},\mathbb{F}_q})$
and $Z=\pi^{-1}(0)$. 
Then 
$U_{\mathbb{F}_{q^2}}$ and $Z_{\bF_{q^2}}$ admit 
left actions induced by the natural action of 
$\oU_n (\bF_q)$ on 
$\mathbb{A}_{\mathbb{F}_{q^2}}^n$. 
Let $\mathscr{L}_{\psi}$ be as in \S \ref{CCC}. 
In this section, all maps between 
$\oU_n (\bF_q)$-representations are 
$\oU_n (\bF_q)$-equivariant. 

\begin{lem}\label{iso2}
We have a long exact sequence 
\begin{align*}
0 &\to H_{\rm c}^{n-1}(Z_{\overline{\mathbb{F}}_q},\Lambda) \to 
H_{\rm c}^n(U_{\overline{\mathbb{F}}_q},\pi^\ast \mathscr{L}_{\psi}) \to 
H_{\rm c}^n(\mathbb{A}^n,\pi^\ast \mathscr{L}_{\psi}) \to 
H_{\rm c}^n(Z_{\overline{\mathbb{F}}_q},\Lambda) \\
 & \xrightarrow{\delta} H_{\rm c}^{n+1}(U_{\overline{\mathbb{F}}_q},\pi^\ast \mathscr{L}_{\psi})  \to 0. 
\end{align*}
\end{lem}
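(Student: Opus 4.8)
The plan is to run the excision long exact sequence for the open--closed decomposition $\mathbb{A}^n = U \sqcup Z$ and then truncate it using the vanishing supplied by Theorem \ref{old}. Write $j \colon U \hookrightarrow \mathbb{A}^n$ for the open immersion and $i \colon Z \hookrightarrow \mathbb{A}^n$ for the complementary closed immersion. From the standard short exact sequence
\[
 0 \to j_! (\pi^\ast \mathscr{L}_\psi|_U) \to \pi^\ast \mathscr{L}_\psi \to i_\ast (\pi^\ast \mathscr{L}_\psi|_Z) \to 0
\]
of sheaves on $\mathbb{A}^n_{\overline{\mathbb{F}}_q}$, applying $R\Gamma_{\rm c}$ yields a long exact sequence
\[
 \cdots \to H_{\rm c}^i(U_{\overline{\mathbb{F}}_q},\pi^\ast \mathscr{L}_\psi) \to H_{\rm c}^i(\mathbb{A}^n,\pi^\ast \mathscr{L}_\psi) \to H_{\rm c}^i(Z_{\overline{\mathbb{F}}_q},\pi^\ast \mathscr{L}_\psi|_Z) \to H_{\rm c}^{i+1}(U_{\overline{\mathbb{F}}_q},\pi^\ast \mathscr{L}_\psi) \to \cdots.
\]
Since $U_n(\mathbb{F}_q)$ acts compatibly on $U$, $Z$ and $\mathbb{A}^n$ (the action preserves $\pi$, as $\pi(x)=h_n(x,x)$ is an isometry invariant) and the sheaf $\pi^\ast\mathscr{L}_\psi$ is equivariant, all the maps above are $U_n(\mathbb{F}_q)$-equivariant, as required.

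The key point is to identify $\pi^\ast \mathscr{L}_\psi|_Z$ with the constant sheaf $\overline{\mathbb{Q}}_\ell$. By definition $\pi$ sends $Z=\pi^{-1}(0)$ to the origin $0 \in \mathbb{A}^1$, so $\pi^\ast\mathscr{L}_\psi|_Z$ is the pull-back of $\mathscr{L}_\psi|_{\{0\}}$ along the constant map $Z \to \{0\}$. The Artin--Schreier torsor $z^q+z=t$ defining $\mathscr{L}_\psi$ is split over $t=0$ by the section $z=0$, so $\mathscr{L}_\psi|_{\{0\}}$ is canonically trivial; moreover the trivialization is preserved by $U_n(\mathbb{F}_q)$ because the action fixes the coordinate $z$. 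Hence $\pi^\ast\mathscr{L}_\psi|_Z \simeq \overline{\mathbb{Q}}_\ell$ as $U_n(\mathbb{F}_q)$-equivariant sheaves, and the long exact sequence reads with $H_{\rm c}^i(Z_{\overline{\mathbb{F}}_q},\overline{\mathbb{Q}}_\ell)$ in the third slot.

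Finally I would truncate. By the isomorphism \eqref{XLpsi} and Theorem \ref{old} we have $H_{\rm c}^i(\mathbb{A}^n,\pi^\ast \mathscr{L}_\psi) \simeq H_{\rm c}^i(X_{n,\overline{\mathbb{F}}_q},\overline{\mathbb{Q}}_\ell)[\psi] = 0$ for every $i \neq n$; in particular the terms in degrees $n-1$ and $n+1$ vanish. Extracting the portion of the long exact sequence in degrees $n-1$, $n$, $n+1$ and inserting these two vanishings turns the incoming map into an injection of $H_{\rm c}^{n-1}(Z_{\overline{\mathbb{F}}_q},\overline{\mathbb{Q}}_\ell)$ and the outgoing map into a surjection onto $H_{\rm c}^{n+1}(U_{\overline{\mathbb{F}}_q},\pi^\ast\mathscr{L}_\psi)$, which gives exactly the asserted five-term sequence. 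The argument is essentially formal; the only genuine thing to check is the equivariant triviality of $\pi^\ast\mathscr{L}_\psi$ over the Fermat cone $Z$, which is precisely where the splitting of the Artin--Schreier torsor at the origin is used.
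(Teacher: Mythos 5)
Your proof is correct and follows essentially the same route as the paper: the excision sequence for the decomposition $\mathbb{A}^n = U \sqcup Z$, the identification $(\pi^\ast \mathscr{L}_{\psi})|_Z \simeq \overline{\mathbb{Q}}_{\ell}$ (the paper states this without the Artin--Schreier splitting detail you supply), and the vanishing $H_{\rm c}^i(\mathbb{A}^n,\pi^\ast \mathscr{L}_{\psi})=0$ for $i \neq n$ from Theorem \ref{old} via \eqref{XLpsi}. Your write-up merely makes explicit the truncation and equivariance points that the paper leaves implicit.
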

\begin{proof}
We have $H_{\rm c}^i(\mathbb{A}^n,\pi^\ast \mathscr{L}_{\psi})=0$ for $i\neq n$ by Theorem \ref{old} and the K\"{u}nneth formula. 
We have $(\pi^\ast \mathscr{L}_{\psi})|_Z=\Lambda$. Hence the assertion follows. 
\end{proof}

\begin{lem}\label{special}
Assume that $\Lambda =\ol{\bQ}_{\ell}$. 
Let $\chi \in \mu_{q+1}^{\vee}$. 
Then $H_{\rm c}^n(\mathbb{A}^n,\pi^\ast \mathscr{L}_{\psi})[\chi]$ 
is an irreducible $\oU_n (\bF_q)$-representation, and 
we have 
\begin{align*}
\dim H_{\rm c}^n (\mathbb{A}^n,\pi^\ast \mathscr{L}_{\psi})[\chi] 
=\begin{cases}
\displaystyle \frac{q^n+(-1)^n q}{q+1} & \textrm{if $\chi=1$}, \\
\displaystyle \frac{q^n-(-1)^n}{q+1} & \textrm{if $\chi \neq 1$}. 
\end{cases}
\end{align*}
\end{lem}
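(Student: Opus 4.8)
The plan is to prove the two assertions separately: the dimension formula from the character of the Heisenberg--Weil representation, and the irreducibility from a single inner product computation. Throughout I write $\omega = H_{\rm c}^n(\mathbb{A}^n,\pi^\ast \mathscr{L}_{\psi})$, which by \eqref{XLpsi} and Theorem \ref{old} is identified, as a $U(h_n)$-representation, with $\omega_{U(h_n)}=\rho_{\psi,\mathit{HU}(h_n)}|_{U(h_n)}$. The $\mu_{q+1}$-action giving the $[\chi]$-decomposition is the one induced by the scalar matrices $\zeta\cdot \mathrm{id}_n \in U(h_n)$ for $\zeta \in \mu_{q+1}$, which constitute the center of $U(h_n)$.

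For the dimensions, I would apply the trace formula of Lemma \ref{lem:HWcha} to $g = \zeta\cdot \mathrm{id}_n$. Since $N(\zeta\cdot \mathrm{id}_n)=n$ for $\zeta=1$ and $N(\zeta\cdot \mathrm{id}_n)=0$ for $\zeta\neq 1$, the trace of $\zeta$ on $\omega$ is $q^n$ for $\zeta=1$ and $(-1)^n$ otherwise (this recovers \eqref{det}). Extracting the multiplicity of $\chi$ by orthogonality of $\mu_{q+1}^{\vee}$ gives
\[
 \dim \omega[\chi] = \frac{1}{q+1}\Bigl( q^n + (-1)^n\bigl( (q+1)\delta_{\chi,1} - 1\bigr)\Bigr),
\]
which simplifies to the two stated values.

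For the irreducibility, the key point is an inner product computation. Because the character values $(-1)^n(-q)^{N(g)}$ are rational and $N(g)=N(g^{-1})$, one has
\[
 \langle \omega, \omega\rangle = \frac{1}{|U(h_n)|}\sum_{g \in U(h_n)} q^{2N(g)} = \frac{1}{|U(h_n)|}\sum_{g \in U(h_n)} \#\{ v \in \mathbb{F}_{q^2}^n : gv = v\},
\]
since $q^{2N(g)}=(q^2)^{N(g)}$ is the number of fixed points of $g$ on $V=\mathbb{F}_{q^2}^n$. By Burnside's orbit-counting formula this equals the number of $U(h_n)$-orbits on $V$. By Witt's theorem these orbits are governed by the norm $h_n(v,v)\in \mathbb{F}_q$: together with $\{0\}$ one obtains $q+1$ orbits for $n \geq 2$ (the value $0$ splitting into $\{0\}$ and the nonzero isotropic vectors) and $q$ orbits for $n=1$ (where no nonzero isotropic vectors exist). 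Since $\mu_{q+1}$ is central, $\omega = \bigoplus_{\chi} \omega[\chi]$ is orthogonal, so $\langle \omega,\omega\rangle = \sum_{\chi}\langle \omega[\chi],\omega[\chi]\rangle$. For $n \geq 2$ every $\omega[\chi]$ is nonzero by the dimension formula, hence each summand is $\geq 1$; as there are exactly $q+1$ characters and the total is $q+1$, each $\langle \omega[\chi],\omega[\chi]\rangle = 1$ and $\omega[\chi]$ is irreducible. For $n=1$ the pieces are one-dimensional (hence irreducible) for $\chi \neq 1$ and zero for $\chi=1$.

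I expect the main obstacle to be the orbit count, i.e.\ correctly identifying $\langle \omega,\omega\rangle$ with the number of $U(h_n)$-orbits on $V$ and evaluating it: the care lies in the norm-$0$ fiber, which behaves differently in the cases $n=1$ and $n \geq 2$, and in checking that every isotypic component is nonzero so that the counting argument ``a sum of $q+1$ positive integers equal to $q+1$'' forces each to be $1$.
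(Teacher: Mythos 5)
Your proof is correct, but it takes a genuinely different route from the paper's. The paper disposes of this lemma in a single sentence: it invokes Theorem \ref{old} together with a citation of \cite[Corollary 4.5(a) and its proof]{GerWeil}, where the decomposition of the unitary Weil representation under the center into $q+1$ irreducible pieces of precisely these dimensions is already established. You instead reprove that input from scratch, using only the trace formula of Lemma \ref{lem:HWcha}: the dimensions follow by orthogonality on the central subgroup $\mu_{q+1}\cdot \mathrm{id}_n$ (and your identification of the $[\chi]$-grading with the central character decomposition is the correct reading of the paper's convention, as one sees in the proof of Lemma \ref{fol}, where the $\mu_{q+1}$-action is scalar multiplication on coordinates), and irreducibility follows from the Burnside count
\[
 \langle \omega,\omega\rangle \;=\; \#\{\textrm{$U(h_n)$-orbits on $V$}\},
\]
evaluated via Witt's theorem as $q+1$ for $n \geq 2$ and $q$ for $n=1$, combined with the nonvanishing of each isotypic piece. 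All the individual steps check out: the character $(-1)^n(-q)^{N(g)}$ is real, $q^{2N(g)}=|\Ker(g-1)|$ is indeed the fixed-point count on $V$, the isotypic decomposition is orthogonal because $\mu_{q+1}$ is central, and Witt's extension theorem does apply to hermitian forms over $\mathbb{F}_{q^2}/\mathbb{F}_q$ in every characteristic, including $p=2$ (the form is trace-valued since the extension is separable), so your argument is uniform in $p$, just like the paper's. What your route buys is self-containedness and a transparent mechanism; it also runs parallel to the orbit-counting argument the paper itself uses later in Lemma \ref{lem:inn}, and it makes explicit the degenerate case $(n,\chi)=(1,1)$, where the isotypic component vanishes, so that ``irreducible'' in the statement must be read as allowing the zero space --- a point the paper's formulation glosses over. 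What the citation buys is brevity.
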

\begin{proof}
The assertion follows from Theorem \ref{old} and 
\cite[Corollary 4.5(a) and its proof]{GerWeil}. 
\end{proof}

Let $S_n$ be the Fermat variety defined by 
$\sum_{i=1}^n x_i^{q+1}=0$ in $\mathbb{P}_{\mathbb{F}_q}^{n-1}$. 
Let $Y_n=\mathbb{P}_{\mathbb{F}_q}^{n-1} \setminus S_n$.  
Let $\oU_n (\bF_q)$ act on $\mathbb{P}_{\mathbb{F}_{q^2}}^{n-1}$ by usual left multiplication. 
Then $S_{n,\mathbb{F}_{q^2}}$ is stable under the action. 

Let $\widetilde{Y}_n$ be the affine smooth variety 
defined by $\sum_{i=1}^n y_i^{q+1}=1$ in 
$\mathbb{A}_{\mathbb{F}_q}^n$. 
Let 
\[
f_{Y_n} \colon \widetilde{Y}_n \to 
Y_n;\ (y_i)_{1 \leq i \leq n} \mapsto [y_1:\cdots:y_n]. 
\]
Let $\mu_{q+1}$ act on $Y_{n,\bF_{q^2}}$ by $(y_i)_{1 \leq i \leq n} \mapsto (\zeta y_i)_{1 \leq i \leq n}$ for $\zeta \in \mu_{q+1}$. Then $Y_{n,\bF_{q^2}}$ is a $\mu_{q+1}$-torsor over $Y_{n,\mathbb{F}_{q^2}}$. 
For $\chi \in \mu_{q+1}^{\vee}$, let 
$\mathscr{K}_{Y_n,\chi}$ denote the smooth 
$\Lambda$-sheaf on $Y_{n,\mathbb{F}_{q^2}}$ associated to 
$f_{Y_n}$ and $\chi^{-1}$. 
We note that 
$\mathscr{K}_{Y_n,\chi}$ is defined over 
$Y_n$ if $\chi^2 =1$. 

In the sequel, we assume that $q+1$ is invertible in $\Lambda$. 

\begin{lem}\label{fol}
We have an isomorphism
$H_{\rm c}^i(U_{\overline{\mathbb{F}}_q},
\pi^\ast \mathscr{L}_{\psi})[\chi] \simeq 
H_{\rm c}^{i-1}(Y_{n,\overline{\mathbb{F}}_q},\mathscr{K}_{Y_n,\chi})$ 
as $\oU_n(\bF_q)$-representations 
for $\chi \in \Hom (\mu_{q+1},\Lambda^{\times})$ 
and any integer $i$. 
If $\Lambda=\ol{\bQ}_{\ell}$ and $\chi^2 =1$, 
then we have an isomorphism 
\[
 H_{\rm c}^i(U_{\overline{\mathbb{F}}_q},
 \pi^\ast \mathscr{L}_{\psi})[\chi] \simeq 
 H_{\rm c}^{i-1}(Y_{n,\overline{\mathbb{F}}_q},\mathscr{K}_{Y_n,\chi}) 
 \otimes \delta_{\chi',\psi} 
\]
as representations of $\oU_n(\bF_q)$ and $\Gal (\ol{\bF}_q/\bF_q)$, 
where 
$\chi'$ is the character of $\bF_q^{\times}$ of the same order as 
$\chi$ and 
$\delta_{\chi',\psi}$ denotes the unramified character of 
$\Gal (\ol{\bF}_q/\bF_q)$ such that 
the $q$-th power geometric Frobenius element acts by 
the Gauss sum $\tau(\chi',\psi)$ defined in 
\cite[Sommes trig., (4.1.1)]{DelCoet}. 
\end{lem}
\begin{proof}
We consider the affine smooth variety 
\[
\widetilde{U}=\Biggl\{ ((x_i)_{1 \leq i \leq n},t) \in \mathbb{A}_{\mathbb{F}_q}^n \times \mathbb{G}_{\mathrm{m},\mathbb{F}_q} \Biggm| 
 \sum_{i=1}^n x_i^{q+1}=t^{q+1} \Biggr\} 
\]
over $\mathbb{F}_q$. 
The space $\widetilde{U}_{\mathbb{F}_{q^2}}$ admits 
a left action induced by the natural action of 
$\oU_n (\bF_q)$ on 
$\mathbb{A}_{\mathbb{F}_{q^2}}^n$. 
Let $\mu_{q+1}^2$ act on $\widetilde{U}_{\mathbb{F}_{q^2}}$ by
\[
 \widetilde{U}_{\mathbb{F}_{q^2}} \to \widetilde{U}_{\mathbb{F}_{q^2}};\ 
 ((x_i)_{1 \leq i \leq n},t) \mapsto 
 ((\zeta_1 x_i)_{1 \leq i \leq n},\zeta_2 t)
\]
for $(\zeta_1,\zeta_2) \in \mu_{q+1}^2$. 
Clearly, the action of $\mu_{q+1}^2$ on 
$\widetilde{U}_{\mathbb{F}_{q^2}}$ is free.  
We have a natural identification 
\[
 U=\Biggl\{ ((x_i)_{1 \leq i \leq n},s) \in 
 \mathbb{A}_{\mathbb{F}_q}^n \times \mathbb{G}_{\mathrm{m},\mathbb{F}_q} 
 \Biggm| \sum_{i=1}^n x_i^{q+1}=s \Biggr\}. 
\]
We have the morphism 
\begin{align*}
f_U & \colon \widetilde{U} \to U;\ 
((x_i)_{1 \leq i \leq n},t) \mapsto 
\left((x_i)_{1 \leq i \leq n},t^{q+1}\right). 
\end{align*} 
We put $H = \{ (1,\zeta) \in \mu_{q+1}^2 \mid \zeta \in \mu_{q+1} \}$. 
Then 
$f_U$ is an $H$-torsor over $\mathbb{F}_{q^2}$. 

Let $f_{\bG_{\mathrm{m}}} \colon 
\mathbb{G}_{\mathrm{m},\mathbb{F}_q} \to 
\mathbb{G}_{\mathrm{m},\mathbb{F}_q};\ t \mapsto t^{q+1}$. 
We have the isomorphism 
\[
 \widetilde{\varphi} \colon \widetilde{U} \xrightarrow{\sim} 
 \widetilde{Y}_n \times \mathbb{G}_{\mathrm{m},\mathbb{F}_q};\
 ( (x_i)_{1 \leq i \leq n},t) \mapsto ((x_i/t)_{1 \leq i \leq n},t).  
\]
Let $\mu_{q+1}^2$ act on $\widetilde{Y}_{n,\mathbb{F}_{q^2}} \times \mathbb{G}_{\mathrm{m},\mathbb{F}_{q^2}}$ by 
\begin{equation}\label{fog}
 \widetilde{Y}_{n,\mathbb{F}_{q^2}} \times \mathbb{G}_{\mathrm{m},\mathbb{F}_{q^2}}
\to 
\widetilde{Y}_{n,\mathbb{F}_{q^2}} \times \mathbb{G}_{\mathrm{m},\mathbb{F}_{q^2}};\ 
((y_i)_{1 \leq i \leq n},t) \mapsto ((\zeta_1y_i)_{1 \leq i \leq n},\zeta_2t)
\end{equation}
for $(\zeta_1,\zeta_2) \in \mu_{q+1}^2$. 
The morphism $\widetilde{\varphi}$ is compatible with 
$\phi \colon \mu_{q+1}^2 \xrightarrow{\sim} \mu_{q+1}^2 ;\ (\zeta_1,\zeta_2) \mapsto (\zeta_1/\zeta_2,\zeta_2)$
in the sense that $\widetilde{\varphi}(g x)=\phi(g) \widetilde{\varphi}(x)$
for $x \in \widetilde{U}_{\mathbb{F}_{q^2}}$ and $g \in \mu_{q+1}^2$. 
The isomorphism $\widetilde{\varphi}$ induces an isomorphism 
\[
 \varphi \colon U_{\mathbb{F}_{q^2}}=\widetilde{U}_{\mathbb{F}_{q^2}}/H 
 \xrightarrow{\sim} 
 (\widetilde{Y}_{n,_{\mathbb{F}_{q^2}}} \times 
 \mathbb{G}_{\mathrm{m},\mathbb{F}_{q^2}})/\phi(H) 
\]
compatible with the isomorphism 
$\mu_{q+1}^2/H \xrightarrow{\sim} \mu_{q+1}^2/\phi(H)$ induced by $\phi$. 
By taking the quotients of the both sides of $\varphi$ by 
$\mu_{q+1}^2/H$ and $\mu_{q+1}^2/\phi(H)$, 
we obtain an isomorphism 
$\overline{\varphi} \colon U_{\mathbb{F}_{q^2}}/\mu_{q+1}
\xrightarrow{\sim} Y_{n,\mathbb{F}_{q^2}} \times 
\mathbb{G}_{\mathrm{m},\mathbb{F}_{q^2}}$. 
Let 
\[
 \mathrm{pr}_1 \colon Y_{n,\mathbb{F}_{q^2}} \times \mathbb{G}_{\mathrm{m},\mathbb{F}_{q^2}} \to Y_{n,\mathbb{F}_{q^2}}, \quad 
 \mathrm{pr}_2 \colon Y_{n,\mathbb{F}_{q^2}} \times \mathbb{G}_{\mathrm{m},\mathbb{F}_{q^2}} \to \mathbb{G}_{\mathrm{m},\mathbb{F}_{q^2}}
\]
be the first projection and the second projection respectively. 
We have the commutative diagram
\[
\xymatrix{
\widetilde{U}_{\mathbb{F}_{q^2}} \ar[d]_{f_U}\ar[r]^-{\widetilde{\varphi}} & \widetilde{Y}_{n,\mathbb{F}_{q^2}} \times \mathbb{G}_{\mathrm{m},\mathbb{F}_{q^2}} \ar[d]^{\rm can.}
\ar@/^84pt/[dd]^{f_{Y_n} \times f_{\bG_{\mathrm{m}}}}\\
U_{\mathbb{F}_{q^2}} \ar[d]_{\rm can.}\ar[r]^-{\varphi} \ar@/_48pt/[dd]_{\pi} & (\widetilde{Y}_{n,\mathbb{F}_{q^2}} \times \mathbb{G}_{\mathrm{m},\mathbb{F}_{q^2}})/\phi(H) \ar[d]^{f'}\\
U_{\mathbb{F}_{q^2}}/\mu_{q+1} \ar[r]^-{\overline{\varphi}} \ar[d]_{\mathrm{pr}_2 \circ \overline{\varphi}} & Y_{n,\mathbb{F}_{q^2}} \times \mathbb{G}_{\mathrm{m},\mathbb{F}_{q^2}} \ar[dl]^{\rm pr_2}\\
\mathbb{G}_{\mathrm{m},\mathbb{F}_{q^2}},  &
}
\]
where $f'$ is a morphism induced by 
$f_{Y_n} \times f_{\bG_{\mathrm{m}}}$. 

For smooth $\Lambda$-sheaves 
 $\mathscr{F}$ on $Y_{n,\mathbb{F}_{q^2}}$ and $\mathscr{G}$ on $\mathbb{G}_{\mathrm{m},\mathbb{F}_{q^2}}$, 
let $\mathscr{F} \boxtimes \mathscr{G}$ denote the sheaf $\mathrm{pr}_1^\ast  \mathscr{F} \otimes \mathrm{pr}_2^\ast \mathscr{G}$
 on $Y_{n,\mathbb{F}_{q^2}} \times \mathbb{G}_{\mathrm{m},\mathbb{F}_{q^2}}$.  
 Let $\mathscr{K}_{\mathbb{G}_{\mathrm{m}},\chi}$ be as in \S \ref{CCC}. 
We identify as $\mu_{q+1} \xrightarrow{\sim}\mu_{q+1}^2/\phi(H);\ \zeta \mapsto (\zeta,1)$. 
We have 
 \[
 f'_{\ast} \Lambda =\left((f_{Y_n} \times f_{\bG_{\mathrm{m}}})_{\ast} \Lambda \right)^{\phi(H)} \simeq 
 \bigoplus_{\chi \in \mu_{q+1}^{\vee}}  \mathscr{K}_{Y_n,\chi} \boxtimes 
 \mathscr{K}_{\mathbb{G}_{\mathrm{m}},\chi^{-1}} 
 \]
by the K\"{u}nneth formula. 
Let $\mathscr{K}'_{\chi}$ denote the smooth 
$\Lambda$-sheaf on $U_{\mathbb{F}_{q^2}}/\mu_{q+1}$
associated to $\chi^{-1}$ and the $\mu_{q+1}$-torsor 
$U_{\mathbb{F}_{q^2}} \to U_{\mathbb{F}_{q^2}}/\mu_{q+1}$. We identify as $\mu_{q+1} \simeq \mu_{q+1}^2/H ;\ \zeta \mapsto 
(\zeta,1)$.  
We have $\mathscr{K}'_{\chi} \simeq \overline{\varphi}^\ast 
\left(\mathscr{K}_{Y_n,\chi} \boxtimes \mathscr{K}_{\mathbb{G}_{\mathrm{m}},\chi^{-1}}\right)$. 
By the K\"{u}nneth formula and \eqref{som}, 
we have isomorphisms
\begin{align*}
 H_{\rm c}^i(U_{\overline{\mathbb{F}}_q},
 \pi^\ast \mathscr{L}_{\psi})[\chi] & \simeq 
 H_{\rm c}^i(U_{\overline{\mathbb{F}}_q}/\mu_{q+1},
 \mathscr{K}'_{\chi} \otimes (\mathrm{pr}_2\circ \overline{\varphi})^\ast 
 \mathscr{L}_{\psi}) \\
 & \simeq H_{\rm c}^i(U_{\overline{\mathbb{F}}_q}/\mu_{q+1},
 \overline{\varphi}^\ast(\mathscr{K}_{Y_n,\chi} \boxtimes 
 (\mathscr{K}_{\mathbb{G}_{\mathrm{m}},\chi^{-1}} \otimes \mathscr{L}_{\psi}))) \\
 & \simeq 
 H_{\rm c}^i (Y_{n,\overline{\mathbb{F}}_q}
 \times \mathbb{G}_{\mathrm{m}},\mathscr{K}_{Y_n,\chi} \boxtimes 
 (\mathscr{K}_{\mathbb{G}_{\mathrm{m}},\chi^{-1}} \otimes \mathscr{L}_{\psi})) \\
 & \simeq H_{\rm c}^{i-1}(Y_{n,\overline{\mathbb{F}}_q},\mathscr{K}_{Y_n,\chi}) 
 \otimes 
 H_{\rm c}^1(\mathbb{G}_{\mathrm{m}},\mathscr{K}_{\mathbb{G}_{\mathrm{m}},\chi^{-1}} \otimes 
 \mathscr{L}_{\psi}) 
 \simeq H_{\rm c}^{i-1}(Y_{n,\overline{\mathbb{F}}_q},\mathscr{K}_{Y_n,\chi})  
\end{align*}
as $\oU_n (\bF_q)$-representations.  
The last claim follows from the above arguments and 
\cite[Sommes trig., Proposition 4.2 (ii)]{DelCoet}. 
\end{proof}

We study the cohomology of $Z_{\overline{\mathbb{F}}_q}$. 
Let $Z^0 \subset Z$ be the open subscheme defined by
$(x_1,\ldots,x_n) \neq \mathbf{0}=(0,\ldots,0)$. 
Now, we regard $Z^0$ as a closed subscheme of 
$\mathbb{A}_{\mathbb{F}_q}^n \setminus \{\mathbf{0}\}$. 
The morphism $\mathbb{A}_{\mathbb{F}_q}^n
\setminus \{\mathbf{0}\} \to \mathbb{P}^{n-1}_{\mathbb{F}_q};\ (x_i)_{1 \leq i \leq n} \mapsto 
[x_1:\cdots:x_n]$ is a $\mathbb{G}_{\mathrm{m},\mathbb{F}_q}$-bundle. 
By restricting this to $Z^0$, 
we have a $\mathbb{G}_{\mathrm{m},\mathbb{F}_q}$-bundle 
\[
\pi_0 \colon Z^0 \to S_n;\ (x_i)_{1 \leq i \leq n} \mapsto 
[x_1:\cdots:x_n].   
\]
The morphism $\pi_0$ factors as  
\[
 Z^0 \xrightarrow{f_{Z^0}} Z^0/\mu_{q+1} \xrightarrow{\overline{\pi}_0} S_n , 
\]
where $Z^0/\mu_{q+1}$ denotes 
the quotient of $Z^0$ under the natural action of 
the group scheme of $(q+1)$-st roots of unity over $\bF_q$. 

\begin{lem}\label{sppe}
\begin{enumerate}
\item
We have the spectral sequence 
\begin{equation}\label{spectral}
E_2^{a,b}=H^a(S_{n,\overline{\mathbb{F}}_q},R^b\overline{\pi}_{0,!} \Lambda) \Rightarrow
E^{a+b}=H_{\rm c}^{a+b}(Z^0_{\overline{\mathbb{F}}_q},\Lambda). 
\end{equation}
\item\label{enu:Rpi} 
We have 
\[
 R^b\overline{\pi}_{0,!}
 \Lambda \simeq 
 \begin{cases}
 \Lambda & \textrm{if $b=1$}, \\ 
 \Lambda (-1) & \textrm{if $b=2$}, \\ 
 0 & \textrm{if $b \neq 1,2$.} 
 \end{cases}
\]
\item\label{enu:HZ0} 
The action of $\mu_{q+1}$ on 
$H_{\rm c}^i(Z^0_{\overline{\mathbb{F}}_q},\Lambda)$ is trivial for any $i$. 
\end{enumerate}
\end{lem}
\begin{proof}
Since the morphism $\overline{\pi}_0$ is 
a $\mathbb{G}_{\mathrm{m}}$-bundle 
and any $\mathbb{G}_{\mathrm{m}}$-bundle 
is the complement of the zero section in a line bundle, 
we have the second assertion by 
\cite[VII, Proposition 1.3(ii)]{SGA5}. 

We show the first assertion. 
We have the Leray spectral sequence 
\[
E_2^{a,b}=H^a(S_{n,\overline{\mathbb{F}}_q},R^b\pi_{0,!} \Lambda ) \Rightarrow
E^{a+b}=H_{\rm c}^{a+b}(Z^0_{\overline{\mathbb{F}}_q},\Lambda ).   
\]
For $\chi \in \Hom (\mu_{q+1},\Lambda^{\times})$, 
let $\mathscr{K}^0_{\chi}$ denote the 
smooth $\Lambda$-sheaf on  
$Z^0_{\mathbb{F}_{q^2}}/\mu_{q+1}$ associated to the finite Galois \'{e}tale covering 
$f_{Z^0}$ and $\chi^{-1}$. 
We have $f_{Z^0,!} \Lambda \simeq 
\bigoplus_{\chi \in \mu_{q+1}^{\vee}} \mathscr{K}^0_{\chi}$.  
We have $R\overline{\pi}_{0,!} \mathscr{K}^0_{\chi}=0$
for $\chi \in \Hom (\mu_{q+1},\Lambda^{\times}) \setminus \{1\}$ by 
\cite[Sommes trig., Th\'{e}or\`{e}me 2.7]{DelCoet}. 
Hence we have $R^b\pi_{0,!} \Lambda \simeq R^b\overline{\pi}_{0,!}( f_{Z^0,!} \Lambda ) \simeq R^b\overline{\pi}_{0,!} \Lambda$ for any $b$.
Hence the first claim follows. 
The third assertion follows from the proof of 
the first one. 
\end{proof}
\begin{lem}\label{fer}
The action of $\mu_{q+1}$ on 
$H_{\rm c}^i(Z_{\overline{\mathbb{F}}_q},\Lambda )$ is trivial for any $i$.
\end{lem}
\begin{proof}
We have an exact sequence 
\begin{equation}\label{eq:exZ0Z}
 0 \to 
 H_{\rm c}^0(Z_{\overline{\mathbb{F}}_q},\Lambda ) 
 \to 
 H_{\rm c}^0(\{ \mathbf{0} \}_{\overline{\mathbb{F}}_q},\Lambda ) 
 \to 
 H_{\rm c}^1(Z^0_{\overline{\mathbb{F}}_q},\Lambda ) 
 \to
 H_{\rm c}^1(Z_{\overline{\mathbb{F}}_q},\Lambda ) 
 \to 0 
\end{equation}
and an isomorphism 
\begin{equation}\label{eq:Z0Z}
H_{\rm c}^i(Z^0_{\overline{\mathbb{F}}_q},\Lambda ) \xrightarrow{\sim} 
 H_{\rm c}^i(Z_{\overline{\mathbb{F}}_q},\Lambda ) 
\end{equation}
for $i \geq 2$. 
Hence 
the assertion follows from 
Lemma \ref{sppe} \ref{enu:HZ0}. 
\end{proof}
\begin{cor}\label{fer2}
Let $\chi \in \Hom (\mu_{q+1},\Lambda^{\times}) \setminus \{1\}$. 
We have an isomorphism 
\[
H_{\rm c}^i(U_{\overline{\mathbb{F}}_q},\pi^\ast \mathscr{L}_{\psi})[\chi] \simeq 
H_{\rm c}^i(\mathbb{A}^n,\pi^\ast \mathscr{L}_{\psi})[\chi]
\]
for any $i$. 
Furthermore, if $i \neq n$, this is isomorphic to zero.  
\end{cor}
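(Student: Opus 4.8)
The plan is to extract the statement from the localization long exact sequence already underlying Lemma \ref{iso2}, by passing to $\chi$-isotypic parts and invoking the vanishing recorded in Lemma \ref{fer}.

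First I would write out the \emph{full} localization long exact sequence, not just the five-term piece of Lemma \ref{iso2}, attached to the open immersion $U_{\overline{\mathbb{F}}_q} \hookrightarrow \mathbb{A}^n$ with closed complement $Z_{\overline{\mathbb{F}}_q}$ and coefficient sheaf $\pi^\ast \mathscr{L}_{\psi}$. As in the proof of Lemma \ref{iso2} one has $(\pi^\ast \mathscr{L}_{\psi})|_Z \simeq \overline{\mathbb{Q}}_{\ell}$, so the sequence reads
\[
\cdots \to
H_{\rm c}^{i-1}(Z_{\overline{\mathbb{F}}_q},\overline{\mathbb{Q}}_{\ell}) \to
H_{\rm c}^i(U_{\overline{\mathbb{F}}_q},\pi^\ast \mathscr{L}_{\psi}) \to
H_{\rm c}^i(\mathbb{A}^n,\pi^\ast \mathscr{L}_{\psi}) \to
H_{\rm c}^i(Z_{\overline{\mathbb{F}}_q},\overline{\mathbb{Q}}_{\ell}) \to \cdots,
\]
and every map here is $U_n(\bF_q)$-equivariant.

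Second, since the central subgroup $\mu_{q+1} \subset U_n(\bF_q)$ of scalar matrices acts on all of these spaces and the maps are equivariant, I would apply the exact functor ``$\chi$-isotypic part'' termwise (exact because we work over $\overline{\mathbb{Q}}_{\ell}$ and $\mu_{q+1}$ is finite abelian). For $\chi \neq 1$, Lemma \ref{fer} gives $H_{\rm c}^i(Z_{\overline{\mathbb{F}}_q},\overline{\mathbb{Q}}_{\ell})[\chi] = 0$ for every $i$, since the $[1]$-part already exhausts the whole cohomology. Thus the two $Z$-terms flanking the arrow $H_{\rm c}^i(U_{\overline{\mathbb{F}}_q},\pi^\ast \mathscr{L}_{\psi})[\chi] \to H_{\rm c}^i(\mathbb{A}^n,\pi^\ast \mathscr{L}_{\psi})[\chi]$ both vanish, forcing this arrow to be an isomorphism for all $i$, which is the first assertion.

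Finally, for the vanishing I would invoke $H_{\rm c}^i(\mathbb{A}^n,\pi^\ast \mathscr{L}_{\psi}) = 0$ for $i \neq n$, which follows from \eqref{XLpsi}, the K\"{u}nneth formula, and Theorem \ref{old}, exactly as in the proof of Lemma \ref{iso2}; taking the $\chi$-isotypic part and combining with the isomorphism above yields $H_{\rm c}^i(U_{\overline{\mathbb{F}}_q},\pi^\ast \mathscr{L}_{\psi})[\chi] = 0$ for $i \neq n$. I do not expect any genuine obstacle: all the geometric content, namely the triviality of the $\mu_{q+1}$-action on the cohomology of the Fermat cone $Z$, is already packaged in Lemma \ref{fer}, and what remains is the purely formal bookkeeping of isotypic components in a long exact sequence. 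The only points deserving a word of care are that the relevant torus is the central $\mu_{q+1}$ of $U_n(\bF_q)$ and that passage to $[\chi]$ is exact.
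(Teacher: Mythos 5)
Your proposal is correct and is essentially the paper's own argument: the paper likewise deduces the isomorphism from the localization sequence underlying Lemma \ref{iso2} together with Lemma \ref{fer} (which kills the $\chi$-isotypic parts of the $Z$-terms for $\chi \neq 1$), and the vanishing for $i \neq n$ from Theorem \ref{old}. Your only cosmetic deviation is working with the full long exact sequence rather than the truncated five-term version stated in Lemma \ref{iso2}, which changes nothing of substance.
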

\begin{proof}
The former claim follows from Lemma \ref{iso2} and Lemma \ref{fer}. The latter claim follows from \eqref{XLpsi}, 
Theorem \ref{old} and Proposition \ref{prop:Xmodl}. 
\end{proof}

\begin{lem}\label{unip}
Assume that $\Lambda =\ol{\bQ}_{\ell}$. 
\begin{enumerate}
\item  
We have a $\Gal (\overline{\mathbb{F}}_q/\mathbb{F}_q)$-equivariant isomorphism 
\[
H_{\rm c}^{n-1}(Z_{\overline{\mathbb{F}}_q},\overline{\mathbb{Q}}_{\ell}) 
 \xrightarrow{\sim}
H_{\rm c}^n(U_{\overline{\mathbb{F}}_q},\pi^\ast \mathscr{L}_{\psi})[1_{\mu_{q+1}}].
\]
\item\label{enu:HAY}
Assume that $n \geq 2$. 
We have a $\Gal (\overline{\mathbb{F}}_q/\mathbb{F}_q)$-equivariant 
isomorphism 
\[
 H_{\rm c}^n(\mathbb{A}^n,\pi^\ast \mathscr{L}_{\psi})[1_{\mu_{q+1}}]  \xrightarrow{\sim} 
 H_{\rm c}^{n-1}
 (Y_{n,\overline{\mathbb{F}}_q},\overline{\mathbb{Q}}_{\ell}(-1)). 
\]
\end{enumerate}
\end{lem}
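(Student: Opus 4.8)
The plan is to extract both isomorphisms from the $[1]$-isotypic part of the long exact sequence of Lemma~\ref{iso2}, where $[1]$ denotes the isotypic component for the scalar subgroup $\mu_{q+1}\subset U_n(\bF_q)$. Since $(\pi^\ast\mathscr{L}_\psi)|_Z=\overline{\mathbb{Q}}_{\ell}$ and $\mu_{q+1}$ acts trivially on $H_{\rm c}^\bullet(Z_{\overline{\mathbb{F}}_q},\overline{\mathbb{Q}}_{\ell})$ by Lemma~\ref{fer}, taking $[1]$-parts in Lemma~\ref{iso2} yields an exact sequence
\begin{align*}
0\to H_{\rm c}^{n-1}(Z_{\overline{\mathbb{F}}_q},\overline{\mathbb{Q}}_{\ell})&\to H_{\rm c}^n(U_{\overline{\mathbb{F}}_q},\pi^\ast\mathscr{L}_\psi)[1]\xrightarrow{\ \alpha\ }H_{\rm c}^n(\mathbb{A}^n,\pi^\ast\mathscr{L}_\psi)[1]\\
&\xrightarrow{\ \beta\ }H_{\rm c}^n(Z_{\overline{\mathbb{F}}_q},\overline{\mathbb{Q}}_{\ell})\xrightarrow{\ \delta\ }H_{\rm c}^{n+1}(U_{\overline{\mathbb{F}}_q},\pi^\ast\mathscr{L}_\psi)[1]\to 0 .
\end{align*}
Assertion~(1) is exactly that $\alpha=0$, and once this is known, (2) becomes the computation of $\ker\delta=\Image\beta$.

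For~(1), I would prove $\alpha=0$ by comparing weights. By \eqref{XLpsi} and the K\"unneth computation behind Theorem~\ref{old}, $H_{\rm c}^n(\mathbb{A}^n,\pi^\ast\mathscr{L}_\psi)[1]$ is a direct summand of $H_{\rm c}^1(C_{\overline{\mathbb{F}}_q},\overline{\mathbb{Q}}_{\ell})[\psi]^{\otimes n}$, hence pure of weight $n$, each factor $H_{\rm c}^1(\mathbb{G}_{\mathrm m},\mathscr{L}_\psi\otimes\mathscr{K}_\chi)$ with $\chi\neq 1$ being pure of weight $1$. On the other hand, the proof of Lemma~\ref{fol} gives a Frobenius-equivariant isomorphism $H_{\rm c}^n(U_{\overline{\mathbb{F}}_q},\pi^\ast\mathscr{L}_\psi)[1]\simeq H_{\rm c}^{n-1}(Y_{n,\overline{\mathbb{F}}_q},\overline{\mathbb{Q}}_{\ell})\otimes H_{\rm c}^1(\mathbb{G}_{\mathrm m},\mathscr{L}_\psi)$, in which the untwisted factor $H_{\rm c}^1(\mathbb{G}_{\mathrm m},\mathscr{L}_\psi)$ is pure of weight $0$: since $H_{\rm c}^\bullet(\mathbb{A}^1,\mathscr{L}_\psi)=0$, it is the weight-$0$ stalk $(\mathscr{L}_\psi)_0$. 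Thus the source of $\alpha$ has weights $\le n-1$ while the target is pure of weight $n$, so $\alpha=0$ and the first map is an isomorphism, giving~(1).

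For~(2), assume $n\ge 2$ and use $\alpha=0$ to identify $H_{\rm c}^n(\mathbb{A}^n,\pi^\ast\mathscr{L}_\psi)[1]\simeq\ker\delta\subseteq H_{\rm c}^n(Z_{\overline{\mathbb{F}}_q},\overline{\mathbb{Q}}_{\ell})$, where $H_{\rm c}^n(Z_{\overline{\mathbb{F}}_q},\overline{\mathbb{Q}}_{\ell})\simeq H_{\rm c}^n(Z^0_{\overline{\mathbb{F}}_q},\overline{\mathbb{Q}}_{\ell})$ by \eqref{eq:Z0Z}. I would then evaluate the right-hand side through the spectral sequence of Lemma~\ref{sppe} for $\overline{\pi}_0\colon Z^0/\mu_{q+1}\to S_n$, whose differential $d_2$ is cup product with the Euler class $c_1(\mathcal{O}(-1)|_{S_n})$. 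As $S_n$ is a smooth projective hypersurface (its partials $(q+1)x_i^q$ vanish only at $\mathbf{0}$), hard Lefschetz forces $E_\infty^{n-1,1}=0$, so $H_{\rm c}^n(Z_{\overline{\mathbb{F}}_q},\overline{\mathbb{Q}}_{\ell})\simeq E_\infty^{n-2,2}=\ker(\cup c_1)$ on $H^{n-2}(S_{n,\overline{\mathbb{F}}_q},\overline{\mathbb{Q}}_{\ell})(-1)$, which for $n\ge 3$ is the primitive part $P^{n-2}(-1)$. Finally the Gysin sequence for $S_n\subset\mathbb{P}^{n-1}$, together with the vanishing of the hyperplane class on the affine $Y_n$ (valid because $\deg S_n=q+1$ is invertible in $\overline{\mathbb{Q}}_{\ell}$), identifies $H_{\rm c}^{n-1}(Y_{n,\overline{\mathbb{F}}_q},\overline{\mathbb{Q}}_{\ell})$ with the same $P^{n-2}$; matching the two descriptions yields $H_{\rm c}^n(\mathbb{A}^n,\pi^\ast\mathscr{L}_\psi)[1]\simeq H_{\rm c}^{n-1}(Y_{n,\overline{\mathbb{F}}_q},\overline{\mathbb{Q}}_{\ell}(-1))$.

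The hard part will be the bookkeeping in~(2): one must verify that the spectral-sequence differential (equivalently the Gysin pushforward $i_\ast$) annihilates precisely the primitive middle cohomology, which is where hard Lefschetz and the orthogonality of primitive classes to the Lefschetz classes enter, and one must reconcile this with the behaviour of $\delta$. For $n\ge 3$ one checks $H_{\rm c}^n(Y_{n,\overline{\mathbb{F}}_q},\overline{\mathbb{Q}}_{\ell})=0$, so that $\delta=0$ and $H_{\rm c}^n(Z_{\overline{\mathbb{F}}_q},\overline{\mathbb{Q}}_{\ell})$ is already the primitive part; in the low-degree case $n=2$, however, $\delta\neq 0$ and $H_{\rm c}^n(Z_{\overline{\mathbb{F}}_q},\overline{\mathbb{Q}}_{\ell})$ is the full permutation module $H^0(S_{2,\overline{\mathbb{F}}_q},\overline{\mathbb{Q}}_{\ell})(-1)$, where I would pin down $\ker\delta$ as the reduced (primitive) summand using the irreducibility in Lemma~\ref{special}. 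By contrast the weight comparison in~(1) is comparatively routine once the purity of the relevant Fermat and Gauss--sum cohomologies is recorded.
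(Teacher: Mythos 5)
Your proof is correct, but it takes a genuinely different route from the paper's. For (1), the paper never shows your map $\alpha$ vanishes by weights: it notes instead that $H_{\rm c}^{n-1}(Z_{\overline{\mathbb{F}}_q},\overline{\mathbb{Q}}_{\ell})\neq 0$ and that $H_{\rm c}^n(U_{\overline{\mathbb{F}}_q},\pi^\ast \mathscr{L}_{\psi})[1]$ is irreducible (via Lemma \ref{fol} and the identification of $H_{\rm c}^{n-1}(Y_{n,\overline{\mathbb{F}}_q},\overline{\mathbb{Q}}_{\ell})$ with the unipotent representation $\phi_n$ from \cite{HoMaTT}), so the injection furnished by Lemma \ref{iso2} and Lemma \ref{fer} is forced to be onto; your purity argument (source mixed of weights $\leq n-1$, target pure of weight $n$) replaces that irreducibility input by Weil II plus purity of Gauss sums, and it is sound precisely because you invoke only the Frobenius-equivariant K\"{u}nneth step in the proof of Lemma \ref{fol}, not the final, non-equivariant, trivialization of the Gauss-sum line. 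For (2), the paper again argues representation-theoretically: it constructs the composite \eqref{eq:comp}, checks that the kernels of its constituent maps are sums of trivial representations, and concludes by Schur's lemma since source and target are irreducible and non-trivial (Lemma \ref{special} and \eqref{iso3}); you instead compute both sides outright as the twisted primitive middle cohomology $P^{n-2}(-1)$ of the Fermat hypersurface $S_n$, using the Euler-class description of $d_2$ in \eqref{spectral}, hard Lefschetz, and excision/Gysin for $S_n\subset \mathbb{P}^{n-1}$, with a separate Schur-type argument at $n=2$ where $\delta\neq 0$. The trade-off: the paper's proof is shorter because it delegates the cohomology of $Y_n$, $S_n$ and the maps in \eqref{eq:comp} to \cite{HoMaTT}, while yours is geometrically more self-contained and exhibits the unipotent constituent of the Weil representation concretely as primitive Fermat cohomology. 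Two harmless imprecisions to fix when writing it up: the Euler class of $\overline{\pi}_0$ is $(q+1)$ times $c_1(\mathcal{O}(-1)|_{S_n})$ rather than $c_1(\mathcal{O}(-1)|_{S_n})$ (immaterial, since $q+1$ is invertible in $\overline{\mathbb{Q}}_{\ell}$), and the vanishing $H_{\rm c}^n(Y_{n,\overline{\mathbb{F}}_q},\overline{\mathbb{Q}}_{\ell})=0$ for $n\geq 3$ that you need for $\delta=0$ is exactly the paper's \eqref{iso1'}, so it should be cited or rederived by the same Lefschetz computations.
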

\begin{proof} 
By Lemma \ref{fol}, we have an isomorphism 
\begin{equation}\label{iso1}
 H_{\rm c}^n(U_{\overline{\mathbb{F}}_q},\pi^\ast \mathscr{L}_{\psi})[1_{\mu_{q+1}}]
\simeq H_{\rm c}^{n-1}(Y_{n,{\overline{\mathbb{F}}_q}},\overline{\mathbb{Q}}_{\ell}). 
\end{equation}
Let $\phi_n$ be the unipotent representation of 
$\oU_n (\bF_q)$ corresponding to the partition 
$(n-1,1)$ of $n$ (\cf \S \ref{reviewu}). 
By \cite[the proof of Theorem 1]{HoMaTT}, 
we have 
\begin{equation}\label{iso3}
H_{\rm c}^{n-1}(Y_{n,\overline{\mathbb{F}}_q},\overline{\mathbb{Q}}_{\ell}) \simeq \phi_n.
\end{equation}
Note that differentials between $E_2$-terms of 
\eqref{spectral} are zero except 
\[
 d_2^{m-2,2} \colon E_2^{m-2,2}=H^{m-2}(S_{n,\overline{\mathbb{F}}_q},\overline{\mathbb{Q}}_{\ell}(-1))
\to 
E_2^{m,1}=H^{m}(S_{n,\overline{\mathbb{F}}_q},\overline{\mathbb{Q}}_{\ell}) 
\]
for $m \in \bZ$. 

We show the first assertion. 
We have 
$H_{\rm c}^{n-1}(Z_{\overline{\mathbb{F}}_q},\overline{\mathbb{Q}}_{\ell}) \neq 0$ by Lemma \ref{sppe}, \eqref{eq:exZ0Z}, 
\eqref{eq:Z0Z} and \cite[Theorem 1]{HoMaTT}. 
By \eqref{iso1} and \eqref{iso3}, 
the $\oU_n (\bF_q)$-representation 
$H_{\rm c}^n(U_{\overline{\mathbb{F}}_q},\pi^\ast \mathscr{L}_{\psi})[1_{\mu_{q+1}}]$ 
is irreducible. 
Hence the claim follows from 
Lemma \ref{iso2} and Lemma \ref{fer}. 

We show the second assertion. 
By Lemma \ref{fol}, we have isomorphisms 
\begin{equation}\label{iso1'}
H_{\rm c}^{n+1}(U_{\overline{\mathbb{F}}_q},\pi^\ast \mathscr{L}_{\psi})[1_{\mu_{q+1}}] \simeq 
 H_{\rm c}^{n}(Y_{n,{\overline{\mathbb{F}}_q}},\overline{\mathbb{Q}}_{\ell}) \simeq \begin{cases}
 0 & \textrm{if $n \geq 3$}, \\
 \overline{\mathbb{Q}}_{\ell}(-1) & \textrm{if $n=2$},  
 \end{cases}
\end{equation}
where we use \cite[the last line in p.257]{HoMaTT} if $n \geq 3$. 
By the first assertion
and Lemma \ref{iso2}, 
we have an isomorphism 
\begin{equation}\label{aa}
H_{\rm c}^n(\mathbb{A}^n,\pi^\ast \mathscr{L}_{\psi})[1_{\mu_{q+1}}] \xrightarrow{\sim} 
\Ker (\delta \colon H_{\rm c}^n(Z_{\overline{\mathbb{F}}_q},\overline{\mathbb{Q}}_{\ell}) \to 
H_{\rm c}^{n+1}(U_{\overline{\mathbb{F}}_q},\pi^\ast \mathscr{L}_{\psi})[1_{\mu_{q+1}}]). 
\end{equation}
Consider the composite 
\begin{gather}\label{eq:comp}
\begin{aligned}
 H_{\rm c}^n(\mathbb{A}^n,\pi^\ast \mathscr{L}_{\psi})[1_{\mu_{q+1}}] 
 &\to 
 H_{\rm c}^n(Z_{\overline{\mathbb{F}}_q},\overline{\mathbb{Q}}_{\ell}) 
 \simeq 
 H_{\rm c}^n(Z_{\overline{\mathbb{F}}_q}^0,\overline{\mathbb{Q}}_{\ell}) \\ 
 &\to 
 H^{n-2}(S_{n,\overline{\mathbb{F}}_q},\overline{\mathbb{Q}}_{\ell}(-1)) 
 \to 
 H_{\rm c}^{n-1}
 (Y_{n,\overline{\mathbb{F}}_q},\overline{\mathbb{Q}}_{\ell}(-1))
\end{aligned}
\end{gather}
where the third morphism is induced by 
the spectral sequence \eqref{spectral} and 
the last morphism is as in \cite[(i),(ii) in p.258]{HoMaTT}. 
Then the kernels of the morphisms in 
\eqref{eq:comp} are direct sums of 
trivial $\oU_n (\bF_q)$-representations 
by \eqref{aa} and 
\cite[Theorem 1 and (i),(ii) in p.258]{HoMaTT}. 
On the other hand, 
the source and the target of \eqref{eq:comp} 
are irreducible and non-trivial by Lemma \ref{special} and \eqref{iso3}. 
Therefore \eqref{eq:comp} is an isomorphism. 
\end{proof}

\begin{prop}\label{cohXY}
Assume that $\Lambda =\ol{\bQ}_{\ell}$. 
For $\chi \in \mu_{q+1}^{\vee}$, we have 
\[ 
 (H_{\rm c}^n(X_{n,\overline{\mathbb{F}}_q},\overline{\mathbb{Q}}_{\ell})[\psi])[\chi] 
 =
 \begin{cases}
 (-1)^{n-1} \sum_{i \geq 0} (-1)^i H_{\rm c}^i(Y_{n,\overline{\mathbb{F}}_q},\mathscr{K}_{Y_n,\chi}) & \textrm{if $\chi \neq 1$}, \\ 
 (-1)^n( 1_{\oU_n (\bF_q)}- \sum_{i \geq 0} (-1)^i H_{\rm c}^i(Y_{n,\overline{\mathbb{F}}_q},\overline{\mathbb{Q}}_{\ell})) & \textrm{if $\chi = 1$}
 \end{cases} 
\] 
as representations of $\oU_n (\bF_q)$. 
\end{prop}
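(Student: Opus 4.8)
The identity is an equality in the Grothendieck group $R(U_n(\bF_q))$ of virtual $U_n(\bF_q)$-representations, so the plan is to compare compactly supported Euler characteristics of the $[\chi]$-isotypic parts for the central $\mu_{q+1}$-action. First I would use \eqref{XLpsi} together with Theorem \ref{old}: since $H_{\rm c}^i(X_{n,\overline{\mathbb{F}}_q},\overline{\mathbb{Q}}_{\ell})[\psi]$ is concentrated in degree $n$, one has in $R(U_n(\bF_q))$ the identity $(-1)^n H_{\rm c}^n(X_{n,\overline{\mathbb{F}}_q},\overline{\mathbb{Q}}_{\ell})[\psi][\chi] = \sum_i (-1)^i H_{\rm c}^i(\mathbb{A}^n,\pi^\ast\mathscr{L}_\psi)[\chi]$. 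This reduces the problem to computing the Euler characteristic on the right.

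Next I would invoke additivity of the compactly supported Euler characteristic for the $U_n(\bF_q)$-equivariant open-closed decomposition $\mathbb{A}^n = U \sqcup Z$ with the sheaf $\pi^\ast\mathscr{L}_\psi$. This is the content of the long exact sequence of Lemma \ref{iso2}, which extends to all degrees since $H_{\rm c}^\ast(\mathbb{A}^n,\pi^\ast\mathscr{L}_\psi)$ is concentrated in degree $n$ by Theorem \ref{old}; it gives
\[
\sum_i (-1)^i H_{\rm c}^i(\mathbb{A}^n,\pi^\ast\mathscr{L}_\psi) = \sum_i(-1)^i H_{\rm c}^i(U_{\overline{\mathbb{F}}_q},\pi^\ast\mathscr{L}_\psi) + \sum_i (-1)^i H_{\rm c}^i(Z_{\overline{\mathbb{F}}_q},\overline{\mathbb{Q}}_{\ell}).
\]
Passing to $[\chi]$-parts and applying Lemma \ref{fol} to the middle term, the index shift $i \mapsto i-1$ produces a global sign, so the middle term equals $-\sum_j(-1)^j H_{\rm c}^j(Y_{n,\overline{\mathbb{F}}_q},\mathscr{K}'_\chi)$, where $\mathscr{K}'_1 = \overline{\mathbb{Q}}_{\ell}$.

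For $\chi \neq 1$ the closed-stratum term $\sum_i(-1)^i H_{\rm c}^i(Z_{\overline{\mathbb{F}}_q},\overline{\mathbb{Q}}_{\ell})[\chi]$ vanishes by Lemma \ref{fer} (alternatively one may invoke Corollary \ref{fer2} directly), and rearranging the signs yields the first case. The main work is the case $\chi = 1$, where the surviving term $\sum_i(-1)^i H_{\rm c}^i(Z_{\overline{\mathbb{F}}_q},\overline{\mathbb{Q}}_{\ell})$ must be identified with the trivial representation $1$. Here I would use the decomposition $Z = Z^0 \sqcup \{\mathbf{0}\}$ of \eqref{eq:exZ0Z} and \eqref{eq:Z0Z}: the origin is $U_n(\bF_q)$-fixed and contributes $1$, so it suffices to show $\sum_i(-1)^i H_{\rm c}^i(Z^0_{\overline{\mathbb{F}}_q},\overline{\mathbb{Q}}_{\ell}) = 0$ in $R(U_n(\bF_q))$. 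This I would extract from the $\mathbb{G}_{\mathrm{m}}$-bundle $\overline{\pi}_0$ and Lemma \ref{sppe}: the Euler characteristic of the $E_2$-page of \eqref{spectral} equals that of the abutment, and the two nonzero rows $R^1\overline{\pi}_{0,!}\overline{\mathbb{Q}}_{\ell} = \overline{\mathbb{Q}}_{\ell}$ and $R^2\overline{\pi}_{0,!}\overline{\mathbb{Q}}_{\ell} = \overline{\mathbb{Q}}_{\ell}(-1)$ contribute the $H^\ast(S_{n,\overline{\mathbb{F}}_q})$-Euler characteristic with opposite overall signs.

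The step I expect to be the main obstacle is precisely this last cancellation: it is essential that we work in the Grothendieck group, where a Tate twist does not change the underlying $U_n(\bF_q)$-module, so that the $b=1$ and $b=2$ rows cancel despite the possibly nonzero differential $d_2^{m-2,2}$ appearing in the proof of Lemma \ref{unip} (which is immaterial for the Euler characteristic). Once $\sum_i(-1)^i H_{\rm c}^i(Z_{\overline{\mathbb{F}}_q},\overline{\mathbb{Q}}_{\ell}) = 1$ is established, substituting back into the Euler characteristic relation and clearing the overall sign $(-1)^n$ yields both cases of the claimed formula simultaneously.
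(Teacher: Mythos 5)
Your proof is correct, but for the crucial case $\chi=1$ it takes a genuinely different route from the paper's. For $\chi \neq 1$ the two arguments essentially agree: reduce via \eqref{XLpsi} to $\mathbb{A}^n$, pass to $U$ by Corollary \ref{fer2} (equivalently, Lemma \ref{iso2} plus Lemma \ref{fer}), then to $Y_n$ by Lemma \ref{fol}. For $\chi = 1$, however, the paper invokes Lemma \ref{unip} \ref{enu:HAY}, a genuine isomorphism $H_{\rm c}^n(\mathbb{A}^n,\pi^\ast \mathscr{L}_{\psi})[1] \simeq H_{\rm c}^{n-1}(Y_{n,\overline{\mathbb{F}}_q},\overline{\mathbb{Q}}_{\ell}(-1))$, whose proof needs representation-theoretic input from \cite{HoMaTT} (irreducibility of the unipotent representation $\phi_n$ via \eqref{iso3}, and the analysis of the kernels in \eqref{eq:comp}), together with the vanishing of $H_{\rm c}^i(Y_{n,\overline{\mathbb{F}}_q},\overline{\mathbb{Q}}_{\ell})$ outside degrees $n-1$ and $2(n-1)$. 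You instead stay entirely in the Grothendieck group: additivity of compactly supported Euler characteristics over $\mathbb{A}^n = U \sqcup Z$ and $Z = Z^0 \sqcup \{\mathbf{0}\}$, plus the observation that in \eqref{spectral} the rows $b=1$ and $b=2$ are equal as virtual $U_n(\bF_q)$-modules (a Tate twist is invisible to the finite group action) and enter with opposite signs, so that $\sum_i (-1)^i H_{\rm c}^i(Z^0_{\overline{\mathbb{F}}_q},\overline{\mathbb{Q}}_{\ell}) = 0$ and the closed stratum contributes exactly the trivial representation. This is sound: the relevant long exact sequences and \eqref{spectral} are $U_n(\bF_q)$-equivariant because $\pi$ is $U_n(\bF_q)$-invariant, and Euler characteristics pass from the $E_2$-page to the abutment regardless of the possibly nonzero differentials $d_2^{m-2,2}$, exactly as you say. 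Since the proposition asserts only an identity of virtual representations, your softer argument fully suffices, and it has two advantages: it makes Proposition \ref{cohXY} logically independent of Lemma \ref{unip} and of \cite{HoMaTT}, and it treats all $n \geq 1$ uniformly (the paper disposes of $\chi=1$, $n=1$ separately). What it does not recover is the finer, degree-by-degree content of Lemma \ref{unip} \ref{enu:HAY}, which identifies the actual unipotent constituent and the degree in which it lives, rather than only its class in the Grothendieck group.
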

\begin{proof}
If $\chi \neq 1$, the claim follows from 
\eqref{XLpsi}, Lemma \ref{fol} and 
Corollary \ref{fer2}. 
If $\chi = 1$ and $n=1$, the claim is trivial. 
If $\chi = 1$ and $n \geq 2$, 
the claim follows from 
\eqref{XLpsi}, Lemma \ref{unip} \ref{enu:HAY} and the fact that 
$H_{\rm c}^i(Y_{n,\overline{\mathbb{F}}_q},\overline{\mathbb{Q}}_{\ell})=0$ 
if $i \neq n-1, 2(n-1)$ (\cf \cite[Proof of Theorem 1]{HoMaTT}). 
\end{proof}

\section{Lusztig induction}\label{sec:Lus}
We recall Lusztig induction in \cite{LusFinunip}. 
Let $G$ be a connected reductive group defined over $\overline{\bF}_q$ 
with an $\bF_q$-rational structure. 
Let $F$ be the corresponding Frobenius endomorphism on $G$. 

Let $P$ be a parabolic subgroup of $G$, 
and $M$ be a Levi subgroup of $P$. 
Assume that $M$ is $F$-stable. 
We put 
\[
 \widetilde{Y}_P=\{g \in G \mid 
 g^{-1} F(g) \in F(U_P) \}. 
\]
Let $G^F \times M^F$ act on $\widetilde{Y}_P$ by 
$\widetilde{Y}_P \to \widetilde{Y}_P;\ 
x \mapsto g x g'^{-1}$ for $(g,g') \in G^F \times M^F$. 
We set 
\begin{align*}
 Y_P = \left\{g \in  G \mid g^{-1} F(g) \in F(P) \right\}/M.
\end{align*}
The morphism 
\[
 \pi_P \colon \widetilde{Y}_P \to Y_P ;\ g \mapsto gM 
\]
is known to be an 
$M^F$-torsor (\cf \cite[\S 7.3]{DiMiParDL}).
We give a proof of it.  
\begin{lem}\label{lem:Mtor}
The morphism $\pi_P$ is an $M^F$-torsor. 
\end{lem}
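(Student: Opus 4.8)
The plan is to realize $\pi_P$, étale-locally on $Y_P$, as a pullback of the Lang isogeny of $M$, so that the torsor property becomes the Lang--Steinberg theorem. Throughout I write $\widehat{Y}_P=\{g\in G\mid g^{-1}F(g)\in F(P)\}$, so that $Y_P=\widehat{Y}_P/M$ and $\widetilde{Y}_P\subseteq\widehat{Y}_P$; the defining quotient morphism $q\colon\widehat{Y}_P\to Y_P$ is a right $M$-torsor (\cf \cite[\S7.3]{DiMiParDL}), and $M$ is connected, being a Levi subgroup of a connected reductive group. Since $M$ is $F$-stable we have $F(P)=F(U_P)\rtimes M$, so every element of $F(P)$ is uniquely of the form $um$ with $u\in F(U_P)$ and $m\in M$; for $g\in\widehat{Y}_P$ I write $g^{-1}F(g)=u_g m_g$ accordingly, and by construction $\widetilde{Y}_P=\{g\in\widehat{Y}_P\mid m_g=1\}$.

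First I would record the transformation law under the right $M$-action. For $m\in M$ one computes $(gm)^{-1}F(gm)=(m^{-1}u_g m)(m^{-1}m_g F(m))$; since $M$ is a Levi of $F(P)$ it normalizes $F(U_P)$, so the two factors lie in $F(U_P)$ and $M$ respectively, and uniqueness gives $m_{gm}=m^{-1}m_g F(m)$. Taking $g\in\widetilde{Y}_P$ (so $m_g=1$) yields $m_{gm}=m^{-1}F(m)$, whence $gm\in\widetilde{Y}_P$ if and only if $m\in M^F$. This shows at once that $M^F$ preserves $\widetilde{Y}_P$, that it acts freely (being right translation inside $G$), and that two points of $\widetilde{Y}_P$ lie in the same fibre of $\pi_P$ exactly when they differ by an element of $M^F$; that is, the fibres of $\pi_P$ are precisely the $M^F$-orbits. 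For surjectivity, given any $g_0\in\widehat{Y}_P$, finding $m$ with $m_{g_0 m}=m^{-1}m_{g_0}F(m)=1$ amounts to solving $m_{g_0}=mF(m)^{-1}$, which is possible by the Lang--Steinberg theorem applied to the connected group $M$; then $g_0 m\in\widetilde{Y}_P$ with $q(g_0 m)=q(g_0)$, so every fibre of $q$ meets $\widetilde{Y}_P$ and $\pi_P$ is surjective.

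To upgrade this to an actual $M^F$-torsor I would use that $q$ is an $M$-torsor: étale-locally on $V\subseteq Y_P$ we may trivialize $\widehat{Y}_P|_V\cong V\times M$ with $M$ acting by right translation on the second factor. Under such a trivialization the component map $g\mapsto m_g$ takes the form $(v,m)\mapsto m^{-1}\sigma(v)F(m)$ for a morphism $\sigma\colon V\to M$ (with $\sigma(v)=m_{(v,1)}$), so $\widetilde{Y}_P|_V$ is identified with $\{(v,m)\mid mF(m)^{-1}=\sigma(v)\}$. Thus $\pi_P|_V$ is the pullback along $\sigma$ of the Lang isogeny $L\colon M\to M$, $m\mapsto mF(m)^{-1}$. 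By Lang--Steinberg $L$ is a finite étale $M^F$-torsor, and torsors are stable under base change, so $\pi_P$ is an $M^F$-torsor.

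I expect the last paragraph to be the main obstacle: passing from the set-theoretic description of the fibres to a genuine, étale-locally trivial torsor requires the reduction to the Lang isogeny via the trivialization of $q$, rather than a naïve comparison of points. Once $\pi_P$ is exhibited as a base change of $L$, the freeness of the action and the identification of the fibres become automatic, and the proof concludes.
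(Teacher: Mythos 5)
Your proof is correct, and it rests on the same two pillars as the paper's own argument --- the decomposition $F(P)=F(U_P)\rtimes M$ (your transformation law $m_{gm}=m^{-1}m_g F(m)$ is exactly the paper's factorization $\phi'=\phi_3\circ\phi_2\circ\phi_1$ in disguise) and the Lang map of the connected group $M$ being an $M^F$-torsor --- but it is organized in the opposite direction. The paper works upstairs and globally: it exhibits $\phi\colon\widetilde{Y}_P\times M\to X$, $(g,m)\mapsto gm$, as the pullback of $\phi'\colon F(U_P)\times M\to F(P)$, $(u,m)\mapsto m^{-1}uF(m)$, along $L\colon X\to F(P)$, $g\mapsto g^{-1}F(g)$, deduces that $\phi$ is an $M^F$-torsor, and only at the very end divides by the free $M$-actions to obtain $\pi_P$. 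You work downstairs and \'etale-locally: taking as input that $q\colon\widehat{Y}_P\to Y_P$ is an $M$-torsor, you trivialize it and identify $\pi_P$ locally with the pullback of the Lang isogeny along $\sigma\colon V\to M$. The two routes trade the same subtlety back and forth: the paper's terse final step (``taking the quotient of $\phi$ by $M$'') is precisely a descent along $q$, so the torsor (or at least fppf) property of $q$ is implicitly needed there as well; you simply pay that cost up front and explicitly, which is a legitimate and arguably more transparent way to finish. Three small points to tighten. First, you cite \cite[\S 7.3]{DiMiParDL} for the $M$-torsor property of $q$, which is the same citation the paper attaches to the very lemma being proven, so as written this can look circular; what you actually need is the more elementary fact that $q$ is the restriction of the $M$-torsor $G\to G/M$ to the $M$-stable subvariety $\widehat{Y}_P$ (equivalently, its pullback along $Y_P\hookrightarrow G/M$), and you should say that instead. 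Second, writing $V\subseteq Y_P$ suggests a Zariski open subset, whereas an $M$-torsor is in general only trivialized on an \'etale cover $V\to Y_P$; your argument works verbatim in that generality since being a torsor is \'etale-local on the base, but the notation should reflect it. Third, your middle paragraph (fibres are $M^F$-orbits, surjectivity via Lang--Steinberg) is subsumed by the final base-change argument, as you yourself observe, and can simply be deleted.
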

\begin{proof}
Let $X=\left\{g \in G \mid g^{-1} F(g) \in F(P)\right\}$. 
We regard $\widetilde{Y}_P$ as a closed subscheme of $X$. 
We consider the morphisms 
$\phi' \colon F(U_P) \times M \to F(P);\ (u,m) \mapsto 
m^{-1} u F(m)$ and 
$\phi \colon 
\widetilde{Y} \times M \to X;\ (g,m) \mapsto gm$. 
Let $L \colon G \to G;\ g \mapsto g^{-1} F(g)$. 
We have the cartesian diagram
\[
\xymatrix{
\widetilde{Y}_P \times M \ar[r]^{\phi\!\!\!\!\!\!}\ar[d]_{L \times 1} & 
X \ar[d]^{L} \\
F(U_P) \times M \ar[r]^{\phi'\!\!\!\!\!\!\!\!\!\!} & F(P). 
}
\]
We consider the morphisms
\begin{align*}
 \phi_1 & \colon F(U_P) \times M \to F(U_P) \times M;\ 
 (u,m) \mapsto (m^{-1}um,m ), \\
 \phi_2 & \colon F(U_P) \times M \to F(U_P) \times M;\ 
 (u,m) \mapsto (u,m^{-1}F(m)), \\
 \phi_3 & \colon F(U_P) \times M \to F(P);\ (u,m) \mapsto um. 
\end{align*}
We can easily check $\phi'=\phi_3 \circ \phi_2 \circ \phi_1$. 
The morphisms $\phi_i$ for $i=1,3$ are isomorphisms. 
The morphism 
$\phi_2$ is an $M^F$-torsor. 
Hence, $\phi$ is an $M^F$-torsor. 
By taking the quotient of $\phi$ by $M$, we obtain the claim. 
\end{proof}

For a representation $\rho$ of $M^F$, 
we consider a virtual $G^F$-module 
\[
 R^{G}_{M \subset P}(\rho)
 =\sum_i(-1)^i ( 
 H_{\rm c}^i(\widetilde{Y}_P,\overline{\mathbb{Q}}_{\ell}) 
 \otimes \rho )^{M^F} 
\]
which is introduced in \cite[p.203]{LusFinunip}. 

Assume that $M=P \cap F(P)$. 
Then we have 
\[
 Y_P =\left\{g \in G \mid g^{-1} F(g) \in F(P) \right\}/(P \cap F(P))
 =\left\{g P \in G/P \mid g^{-1} F(g) \in PF(P) \right\}. 
\]
Assume that there is a decomposition $M=T \times M'$ as algebraic groups over $\bF_q$ 
where $T$ is a torus and $M'$ is a reductive group. 
Let $\chi \in (T^{F})^{\vee}$. 
Let $\mathscr{K}_{Y_P,\chi}$ be the smooth 
$\overline{\mathbb{Q}}_{\ell}$-sheaf on $Y_P$ 
associated to $\chi^{-1}$ under the natural projection 
$M^{F} \to T^{F}$ and $\pi_P$. 
We have 
\begin{gather}\label{ttp}
\begin{aligned}
 R^{G}_{M \subset P}(\chi \otimes 1_{M'^F})
 &=\sum_{i}(-1)^i H_{\rm c}^i(\widetilde{Y}_P,\overline{\mathbb{Q}}_{\ell}) 
 [\chi^{-1} \otimes 1_{M'^F}]\\
 &=
 \sum_{i}(-1)^i H_{\rm c}^i\bigl(
 \widetilde{Y}_P/M'^{F},\overline{\mathbb{Q}}_{\ell}\bigr) 
 [\chi^{-1}] 
 =\sum_{i}(-1)^i H_{\rm c}^i(
 Y_P,\mathscr{K}_{Y_P,\chi}).  
\end{aligned}
\end{gather}
By \cite[Corollary 7.14]{DeLuRep}, we have 
\[
 1_{M'^F}=\sum_{(T') \subset M'} \frac{1}{|W(T')^F|} R_{T'}^{M'}(1_{T'^F})
\]
as characters of $M'^F$, where 
the summation runs over all $M'^F$-conjugacy 
classes of $F$-stable maximal tori $T'$ of $M'$. 
We have 
\[
 \chi \otimes 1_{M'^F}=\sum_{(T') \subset M'} 
 \frac{1}{|W(T')^F|} R_{T \times T'}^{M}(\chi \otimes 1_{T'^F})
\]
as characters of $M^F$. 
By applying $R^G_{M \subset P}$ to this, we have 
\begin{equation}\label{vv}
 R^{G}_{M \subset P}(\chi \otimes 1_{M'^F})=\sum_{(T') \subset M'} \frac{1}{|W(T')^F|}
 R^{G}_{T \times T'}(\chi \otimes 1_{T'^F})
\end{equation}
as characters of $G^F$ by \cite[Corollary 5]{LusFinunip}. 
Therefore we have 
\begin{equation}\label{eq:HYR}
 \sum_{i}(-1)^i H_{\rm c}^i(
 Y_P,\mathscr{K}_{Y_P,\chi}) 
 = \sum_{(T') \subset M'} \frac{1}{|W(T')^F|}
 R^{G}_{T \times T'}(\chi \otimes 1_{T'^F}). 
\end{equation}

\section{Unitary group}\label{sec:uni}
Let $\psi \in \mathbb{F}_q^{\vee} \setminus \{1\}$ in the rest of this paper. 

\subsection{Unipotency}
We use notation in Subsection \ref{reviewu}. 
Let $P \subset G_n$ be the parabolic subgroup 
consisting of matrices $(x_{i,j})$ such that 
$x_{i,1}=0$ for $2 \leq i \leq n$.  
Let $M$ be the $F$-stable 
Levi subgroup of $P$. Note that $P$ is not 
stable by $F$. 
We have $M \simeq G_1 \times 
G_{n-1}$ diagonally embedded in $G_n$.  
We have $G_n^F=\oU_n (\bF_q)$ and 
$M^F=\oU_1 (\bF_q) \times \oU_{n-1}(\bF_q)$. 

For $x=(x_{i,j}) \in \widetilde{Y}_P$, 
we have $\sum_{i=1}^n x_{i,1}^{q+1}=1$ by 
$F^{-1}(x)x \in F(U_P)$. 
Hence, we have the morphism 
$\widetilde{\pi}_n \colon \widetilde{Y}_P \to 
 \widetilde{Y}_{n,\overline{\mathbb{F}}_{q}};\ 
 x \mapsto (x_{i,1})_{1 \leq i \leq n}$. 
This induces a morphism 
$\widetilde{\varphi}_n \colon 
 \widetilde{Y}_P/\oU_{n-1}(\bF_q) \to 
 \widetilde{Y}_{n,\overline{\mathbb{F}}_{q}}$. 
We have an identification 
$\widetilde{Y}_P/M^F \simeq Y_P$ by Lemma \ref{lem:Mtor}. 
As shown in \cite[p.259]{HoMaTT}, 
$\widetilde{\pi}_n$ induces an isomorphism 
$\varphi_n \colon Y_P \simeq \widetilde{Y}_P/M^F 
 \xrightarrow{\sim} Y_{n,\overline{\mathbb{F}}_{q}}$. 
We have the commutative diagram 
\[
 \xymatrix{
 \widetilde{Y}_P \ar[d] \ar[rd]^-{\widetilde{\pi}_n} & \\ 
 \widetilde{Y}_P/\oU_{n-1}(\bF_q) \ar[r]_-{\widetilde{\varphi}_n} 
 \ar[d]_{f_{Y_P}} 
 & \widetilde{Y}_{n,\overline{\mathbb{F}}_{q}} \ar[d]^{f_{Y_n}} \\ 
 Y_P \ar[r]_-{\varphi_n} & Y_{n,\overline{\mathbb{F}}_{q}} 
}
\]
where the vertical morphisms are natural ones. 
Since 
$f_{Y_P}$ and $f_{Y_n}$ are $\mu_{q+1}$-torsors 
and $\varphi_n$ is an isomorphism, 
$\widetilde{\varphi}_n$ is also an isomorphism. 

\begin{prop}\label{class2}
We have 
\[
(H_{\rm c}^n(X_{n,\overline{\mathbb{F}}_q},\overline{\mathbb{Q}}_{\ell})
 [\psi])[\chi]
 =
\begin{cases}
(-1)^{n-1}
R^{G_n}_{M \subset P}(\chi \otimes 1_{G_{n-1}^F}) 
& \textrm{if $\chi \in \mu_{q+1}^{\vee} \setminus \{1\}$}, 
\\
(-1)^{n}(1_{G_n^F}-R^{G_n}_{M \subset P}(1_{M^F})) & \textrm{if $
\chi=1$} 
\end{cases}
\]
as characters of $G_n^F$. 
\end{prop}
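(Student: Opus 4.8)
The plan is to combine Proposition \ref{cohXY} with the Lusztig-induction formula \eqref{ttp}. Proposition \ref{cohXY} already expresses the left-hand side in terms of $\sum_{i}(-1)^i H_{\rm c}^i(Y_{n,\overline{\mathbb{F}}_q},\mathscr{K}'_{\chi})$ (and of $\sum_{i}(-1)^i H_{\rm c}^i(Y_{n,\overline{\mathbb{F}}_q},\overline{\mathbb{Q}}_{\ell})$ when $\chi=1$), while \eqref{ttp} expresses $R^{G_n}_{M \subset P}(\chi \otimes 1)$ as $\sum_{i}(-1)^i H_{\rm c}^i(Y_P,\mathscr{K}_{Y_P,\chi})$. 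Hence it suffices to produce, for every $i$ and every $\chi \in \mu_{q+1}^{\vee}$, a $G_n^F$-equivariant isomorphism $H_{\rm c}^i(Y_P,\mathscr{K}_{Y_P,\chi}) \xrightarrow{\sim} H_{\rm c}^i(Y_{n,\overline{\mathbb{F}}_q},\mathscr{K}'_{\chi})$; substituting this into Proposition \ref{cohXY} then yields both cases simultaneously, using $\mathscr{K}'_1 \simeq \overline{\mathbb{Q}}_{\ell}$ and $\mathscr{K}_{Y_P,1} \simeq \overline{\mathbb{Q}}_{\ell}$.

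To obtain this isomorphism I would use the isomorphism $\varphi_n \colon Y_P \xrightarrow{\sim} Y_{n,\overline{\mathbb{F}}_q}$ constructed in Section \ref{sec:uni} and identify the two coefficient sheaves under it. By the middle equality in \eqref{ttp}, the sheaf $\mathscr{K}_{Y_P,\chi}$ is the one associated to $\chi^{-1}$ and the $\mu_{q+1}$-torsor $f_{Y_P} \colon \widetilde{Y}_P/U_{n-1}(\bF_q) \to Y_P$, obtained from the $M^F$-torsor $\widetilde{Y}_P \to Y_P$ of Lemma \ref{lem:Mtor} by quotienting out $M'^F = U_{n-1}(\bF_q)$ and retaining the residual $T^F = U_1(\bF_q) \cong \mu_{q+1}$ action. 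On the other hand, $\mathscr{K}'_{\chi}$ is associated to $\chi^{-1}$ and the $\mu_{q+1}$-torsor $f_{Y_n} \colon \widetilde{Y}_n \to Y_n$. The commutative square of Section \ref{sec:uni} provides an isomorphism $\widetilde{\varphi}_n \colon \widetilde{Y}_P/U_{n-1}(\bF_q) \xrightarrow{\sim} \widetilde{Y}_{n,\overline{\mathbb{F}}_q}$ covering $\varphi_n$ and compatible with $f_{Y_P}$ and $f_{Y_n}$, so that $\widetilde{\varphi}_n$ carries the first torsor to the second. Provided $\widetilde{\varphi}_n$ intertwines the two $\mu_{q+1}$-actions, this gives $\varphi_n^{\ast}\mathscr{K}'_{\chi} \simeq \mathscr{K}_{Y_P,\chi}$ with matching character $\chi^{-1}$, and hence the desired cohomology isomorphism.

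The main point to check is precisely this equivariance, on two sides. First, $\varphi_n$ must be $G_n^F$-equivariant, for the left $G_n^F$-action (by left multiplication on $Y_P$, and by the natural action on $Y_n \subset \mathbb{P}_{\mathbb{F}_q}^{n-1}$); this holds because $\widetilde{\pi}_n \colon \widetilde{Y}_P \to \widetilde{Y}_{n,\overline{\mathbb{F}}_q}$, $x \mapsto (x_{i,1})_{1 \leq i \leq n}$, sends $gx$ to $g$ applied to the first column, matching the natural $U_n(\bF_q)$-action on $\widetilde{Y}_n$. Second, $\widetilde{\varphi}_n$ must intertwine the residual right $T^F = U_1(\bF_q)$-action on $\widetilde{Y}_P/U_{n-1}(\bF_q)$ with the scaling $\mu_{q+1}$-action on $\widetilde{Y}_n$; this holds because right multiplication by the $U_1(\bF_q)$-factor of $M^F$ scales the first column of $x$ by the corresponding element of $\mu_{q+1}$, which is exactly the action defining $f_{Y_n}$. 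Once both equivariances are verified, passing to compactly supported cohomology and forming the alternating sum yields the identity of virtual $G_n^F$-characters, and substitution into Proposition \ref{cohXY} completes the proof. I expect the only delicate point to be the bookkeeping of the identification $T^F \cong \mu_{q+1}$ together with the precise right $M^F$-action on the first column, i.e.\ confirming that the characters $\chi^{-1}$ on the two sides genuinely coincide rather than differing by an inversion or a Frobenius twist.
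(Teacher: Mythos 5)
Your proposal is correct and takes essentially the same route as the paper: the paper's (one-line) proof of Proposition \ref{class2} is precisely the observation that $\widetilde{\varphi}_n$ is an isomorphism of $\mu_{q+1}$-torsors covering $\varphi_n$, so that the right-hand side of \eqref{ttp}, namely $\sum_{i}(-1)^i H_{\rm c}^i(Y_P,\mathscr{K}_{Y_P,\chi})$, is identified with $\sum_{i}(-1)^i H_{\rm c}^i(Y_{n,\overline{\mathbb{F}}_q},\mathscr{K}'_{\chi})$, after which the claim follows from Proposition \ref{cohXY}. The equivariance checks you flag (that $\varphi_n$ is $G_n^F$-equivariant and that the residual $T^F=U_1(\bF_q)\cong\mu_{q+1}$ action on the first column matches the scaling action defining $f_{Y_n}$, with no inversion or Frobenius twist) are exactly what the paper leaves implicit in the construction of $\widetilde{\pi}_n$ and $\widetilde{\varphi}_n$ preceding the proposition, and your verification of them is sound.
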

\begin{proof}
Since $\widetilde{\varphi}_n$ is an isomorphism, 
the claim follows from Proposition \ref{cohXY} and \eqref{ttp}. 
\end{proof}

In the sequel, 
we write $\omega_{\oU_n}$ for $\omega_{\oU (\bF_{q^2}^n,h_n)}$ defined in \eqref{eq:omegadef}. 
We give a geometric proof of the following fact 
using a relation between $X_n$ and 
a certain Deligne--Lusztig variety. 

\begin{cor}\label{cc}
The $\oU_n (\bF_q)$-representation 
$\omega_{\oU_n} [1_{\mu_{q+1}}]$ is unipotent. 
For each $\chi \in \mu_{q+1}^{\vee} \setminus \{1\}$, 
the $\oU_n (\bF_q)$-representation 
$\omega_{\oU_n} [\chi]$ is not unipotent. 
\end{cor}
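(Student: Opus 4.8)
The plan is to read the characters of the graded pieces $\omega_{U_n}[\chi]$ off Proposition \ref{class2} and then test them against the unipotent characters $\psi^{\lambda}$ using the Lusztig--induction identity \eqref{vv}. First I would record that by Theorem \ref{old} and the isomorphism \eqref{XLpsi} the $\mu_{q+1}$-grading of $\omega_{U_n}$ through $\det$ satisfies $\omega_{U_n}[\chi]\simeq H_{\rm c}^n(X_{n,\overline{\mathbb{F}}_q},\overline{\mathbb{Q}}_\ell)[\psi][\chi]\simeq H_{\rm c}^n(\mathbb{A}^n,\pi^{\ast}\mathscr{L}_{\psi})[\chi]$, so that by Lemma \ref{special} each $\omega_{U_n}[\chi]$ is an \emph{irreducible} $U_n(\bF_q)$-representation, and its character is $(-1)^{n-1}R^{G_n}_{M\subset P}(\chi\otimes 1)$ for $\chi\neq 1$ and $(-1)^n\bigl(1-R^{G_n}_{M\subset P}(1)\bigr)$ for $\chi=1$. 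I would also recall from \S\ref{reviewu} that, via \eqref{class} and \cite[Theorem 2.2]{LuSrChar}, the unipotent characters $\{\psi^{\lambda}\}_{\lambda\in\Lambda_n}$ span the same subspace of class functions as $\{R_{T}^{G_n}(1)\}_{T\in\mathscr{T}_n}$.

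For $\chi=1$, I would apply \eqref{vv} with $\chi=1$ to write $R^{G_n}_{M\subset P}(1)$ as a rational combination of the Deligne--Lusztig characters $R^{G_n}_{T\times T'}(1)$. Each such term has trivial character and hence lies in the span of $\{R_{T}^{G_n}(1)\}$, i.e.\ in the span of the $\psi^{\lambda}$; since the trivial character $1=\psi^{(n)}$ is itself unipotent, the virtual character of $\omega_{U_n}[1]$ lies in this span. Because $\omega_{U_n}[1]$ is a genuine representation and the irreducible characters of $U_n(\bF_q)$ are linearly independent, every irreducible constituent of $\omega_{U_n}[1]$ must be one of the $\psi^{\lambda}$; hence $\omega_{U_n}[1]$ is unipotent.

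For $\chi\neq 1$, I would again use \eqref{vv} to express $R^{G_n}_{M\subset P}(\chi\otimes 1)$ as a combination of $R^{G_n}_{T\times T'}(\chi\otimes 1)$, where $T\times T'$ runs over certain $F$-stable maximal tori of $G_n$ (with $T=G_1$) and the character $\chi\otimes 1$ is \emph{nontrivial} on the $T$-factor. Each $R^{G_n}_{T\times T'}(\chi\otimes 1)$ is then a Deligne--Lusztig character attached to a pair which is not geometrically conjugate to any $(T'',1)$, so by the disjointness of Deligne--Lusztig characters lying in distinct geometric series (\cf \cite[Theorem 6.2, Corollary 6.3]{DeLuRep}), or equivalently by computing $\langle R^{G_n}_{T\times T'}(\chi\otimes 1),R_{T''}^{G_n}(1)\rangle$ and noting it vanishes because $\chi\otimes 1\neq 1$, the class function $R^{G_n}_{M\subset P}(\chi\otimes 1)$ is orthogonal to every $R_{T''}^{G_n}(1)$ and hence to every unipotent character. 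Since $\omega_{U_n}[\chi]$ is irreducible with character $\pm R^{G_n}_{M\subset P}(\chi\otimes 1)$, it coincides with no $\psi^{\lambda}$ and is therefore not unipotent.

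The main obstacle I anticipate is the $\chi=1$ assertion: Proposition \ref{class2} controls only the \emph{virtual} character of $\omega_{U_n}[1]$, so the crux is arguing that membership of this virtual character in the unipotent span forces the genuine representation to be unipotent, which relies essentially on the linear independence of irreducible characters together with the identification of the unipotent span recorded in \S\ref{reviewu}. For $\chi\neq 1$ the delicate point is invoking the correct disjointness (orthogonality) statement for Deligne--Lusztig characters in different geometric series to rule out a contribution to any $R_T^{G_n}(1)$; everything else reduces to bookkeeping with \eqref{vv}.
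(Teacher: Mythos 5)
Your proof is correct and is essentially the paper's own argument with the details filled in: the paper deduces Corollary \ref{cc} in one line from Proposition \ref{class2} and \eqref{vv} together with the Deligne--Lusztig notion of unipotency (citing the sentence after Definition 7.8 of \cite{DeLuRep}), which is exactly where your linear-independence argument for $\chi=1$ and your disjointness-of-geometric-series argument for $\chi\neq 1$ are implicitly invoked.
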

\begin{proof}
The claim follows from \eqref{vv} and 
Proposition \ref{class2} 
(\cf \cite[the sentence after Definition 7.8]{DeLuRep}). 
\end{proof}

\begin{rem}
This is a special case of the preservation of unipotency under the Howe correspondence for a unitary pair, 
which is proved in \cite{AdMoUnip} if $q$ is odd and large enough. 
The assumption on $q$ is necessary in \cite{AdMoUnip}, 
since the proof depends on \cite{SriWcl}. 
\end{rem}

\begin{rem}\label{remuni}
Let $\epsilon_G$ be the $\mathbb{F}_q$-rank of 
a linear algebraic group $G$ over $\overline{\mathbb{F}}_q$. 
For any integer $m \geq 1$, let $m_p$ be the largest 
power of $p$ dividing $m$ and $m_{p'}=m/m_p$.
We go back to our situation. 
We have $\epsilon_{G_n}=[n/2]$ (\cf 
\cite[\S15.1]{DiMiRepLie}). 
Hence, we have $\epsilon_{G_n}\epsilon_M=(-1)^{n-1}$. By \cite[Proposition 12.17]{DiMiRepLie}, we have 
\[
\dim R_{M \subset P}^{G_n}(\chi \otimes 1_{G_{n-1}^F})
=\epsilon_{G_n} \epsilon_{M} \frac{|G_n^F|_{p'}}{|M^F|_{p'}}
=(-1)^{n-1} \frac{q^n-(-1)^{n}}{q+1}
\]
for any $\chi \in \mu_{q+1}^{\vee}$. 
This is compatible with Lemma \ref{special} and Proposition \ref{class2}. 
\end{rem}

\subsection{Branching formula}

We consider the embedding 
$\iota \colon \oU_n \hookrightarrow \oU_{n+1};\ g \mapsto \mathrm{diag}(g,1)$. 
Let $\chi \in \mu_{q+1}^{\vee}$. 
We regard the 
$\oU_{n+1} (\bF_q)$-representation 
$\omega_{\oU_{n+1}} [\chi]$ as a $\oU_n (\bF_q)$-representation
by $\iota$. 
We give a geometric proof of 
the following branching formula 
(\cf \cite[Lemma 4.4]{TieWglirr} in $\SU_n$ case). 
\begin{prop}\label{prop:bran}
Let $\chi \in \mu_{q+1}^{\vee}$. 
We have a decomposition 
\[
 \omega_{\oU_{n+1}} [\chi] |_{\oU_n (\bF_q)} = 
 \bigoplus_{\chi' \in \mu_{q+1}^{\vee} \setminus \{\chi\}} 
 \omega_{\oU_n} [\chi']
\]
as $\oU_n (\bF_q)$-representations. 
\end{prop}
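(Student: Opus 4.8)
The plan is to realize $\omega_{U_{n+1}}[\chi]$ geometrically through Theorem \ref{old} and then to strip off the last coordinate, so that the K\"{u}nneth formula cleanly separates the $U_n(\bF_q)$-action from an auxiliary factor carrying the cohomology of the curve $C$. By Theorem \ref{old} together with \eqref{XLpsi} (applied with $n+1$ in place of $n$), I would identify $\omega_{U_{n+1}}$ with the restriction to $U_{n+1}(\bF_q)$ of $H_{\rm c}^{n+1}(\mathbb{A}^{n+1},\pi_{n+1}^\ast\mathscr{L}_\psi)$, where $\pi_{n+1}\colon\mathbb{A}^{n+1}\to\mathbb{A}^1$ sends $(x_i)$ to $\sum_{i=1}^{n+1}x_i^{q+1}$. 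Writing $\mathbb{A}^{n+1}=\mathbb{A}^n\times\mathbb{A}^1$ with $x_{n+1}$ the coordinate on the last factor, the additivity of the Artin--Schreier sheaf gives
\[
 \pi_{n+1}^\ast\mathscr{L}_\psi\simeq\pi_n^\ast\mathscr{L}_\psi\boxtimes\mathscr{L}_\psi(x_{n+1}^{q+1}),
\]
where $\pi_n\colon\mathbb{A}^n\to\mathbb{A}^1$ is the map of \eqref{pi1}.

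Next I would apply the K\"{u}nneth formula. Since $H_{\rm c}^i(\mathbb{A}^n,\pi_n^\ast\mathscr{L}_\psi)=0$ for $i\neq n$ (Theorem \ref{old} via \eqref{XLpsi}) and $H_{\rm c}^j(\mathbb{A}^1,\mathscr{L}_\psi(x^{q+1}))\simeq H_{\rm c}^j(C_{\overline{\mathbb{F}}_q},\overline{\mathbb{Q}}_\ell)[\psi]=0$ for $j\neq 1$ (Lemma \ref{tri} \ref{enu:H1C}), only the term $(i,j)=(n,1)$ survives, so that
\[
 H_{\rm c}^{n+1}(\mathbb{A}^{n+1},\pi_{n+1}^\ast\mathscr{L}_\psi)\simeq
 H_{\rm c}^n(\mathbb{A}^n,\pi_n^\ast\mathscr{L}_\psi)\otimes
 H_{\rm c}^1(C_{\overline{\mathbb{F}}_q},\overline{\mathbb{Q}}_\ell)[\psi].
\]
By Theorem \ref{old} the first factor is $\omega_{U_n}$ as a $U_n(\bF_q)$-representation, while by Lemma \ref{tri} \ref{enu:H1C} the second factor is $\bigoplus_{\chi''\in\mu_{q+1}^{\vee}\setminus\{1\}}\chi''$ as a $\mu_{q+1}$-representation.

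The decisive step is to keep track of the two different scalar actions. The embedding $\iota\colon U_n\hookrightarrow U_{n+1}$, $g\mapsto\mathrm{diag}(g,1)$, acts on $\mathbb{A}^{n+1}$ through $g$ on $\mathbb{A}^n$ and trivially on $x_{n+1}$, so under the above isomorphism $U_n(\bF_q)$ acts only on the first factor, through $\omega_{U_n}$. The central $\mu_{q+1}\subset U_{n+1}(\bF_q)$ of scalar matrices, however, scales all of $x_1,\dots,x_{n+1}$ simultaneously; hence on the first factor it acts through the central $\mu_{q+1}$ of $U_n$ and on the second factor through $x_{n+1}\mapsto\zeta x_{n+1}$. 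I would conclude that this central $\mu_{q+1}$ acts on the summand $\omega_{U_n}[\chi']\otimes\chi''$ by the character $\chi'\chi''$, so that the $\chi$-isotypic part is cut out by the conditions $\chi'\chi''=\chi$ and $\chi''\neq 1$, equivalently by $\chi'\in\mu_{q+1}^{\vee}\setminus\{\chi\}$. Each surviving $\chi''$ is a line on which $U_n(\bF_q)$ acts trivially, so restricting along $\iota$ would give
\[
 \omega_{U_{n+1}}[\chi]|_{U_n(\bF_q)}\simeq
 \bigoplus_{\chi'\in\mu_{q+1}^{\vee}\setminus\{\chi\}}\omega_{U_n}[\chi'],
\]
as desired. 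I expect the main obstacle to be exactly this bookkeeping: the central $\mu_{q+1}$ of $U_{n+1}$ scales every coordinate and therefore does \emph{not} coincide with the $\iota$-image of the central $\mu_{q+1}$ of $U_n$, and it is only after carefully disentangling the two gradings that the index shift $\chi'\neq\chi$ (rather than all of $\mu_{q+1}^{\vee}$) emerges.
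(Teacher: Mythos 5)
Your proposal is correct and is essentially the paper's own argument: the paper proves this proposition by exactly the combination of the K\"{u}nneth decomposition $\mathbb{A}^{n+1}=\mathbb{A}^n\times\mathbb{A}^1$ and Lemma \ref{tri} \ref{enu:H1C}, with the isotypic bookkeeping for the two different central $\mu_{q+1}$'s left implicit. Your write-up simply makes that bookkeeping (in particular the index shift $\chi'\neq\chi$ coming from $\chi''\neq 1$) explicit, and it is accurate.
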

\begin{proof}
By \eqref{XLpsi}, we have isomorphisms 
\begin{align*}
H_{\mathrm{c}}^{n+1}(X_{n+1,\overline{\mathbb{F}}_q},\overline{\mathbb{Q}}_{\ell})[\psi]
& \simeq 
 H_{\rm c}^{n+1}(\mathbb{A}^{n+1},\mathscr{L}_{\psi}(x_1^{q+1}+\cdots+x_{n+1}^{q+1})) \\
& \simeq 
 H_{\rm c}^{n}(\mathbb{A}^n,  \mathscr{L}_{\psi}(x_1^{q+1}+\cdots+x_n^{q+1})) \otimes 
  H_{\rm c}^1(\mathbb{A}^1, \mathscr{L}_{\psi}(x_{n+1}^{q+1})) \\
& \simeq H_{\mathrm{c}}^{n}(X_{n,\overline{\mathbb{F}}_q},\overline{\mathbb{Q}}_{\ell})[\psi] \otimes 
H_{\mathrm{c}}^1(C_{\overline{\mathbb{F}}_q},\overline{\mathbb{Q}}_{\ell})[\psi], 
\end{align*}
where we use the K\"{u}nneth formula at the second isomorphism. 
By taking the $\chi$-isotypic part of the above isomorphism, we obtain 
the assertion by Lemma \ref{tri} \ref{enu:H1C}  and Theorem \ref{old}. 
\end{proof}

\section{Symplectic orthogonal pair}\label{sec:sop}
\subsection{Representation of a dual pair}\label{ob}
\subsubsection{Construction}
We use the same notation as \S \ref{another}. 
Let $\omega$ be a symplectic form on $V$ 
obtained by the restriction of $h_{2n}'$ to $V$. 
By regarding $V_{\mathbb{F}_{q^2}}$
as a vector space over $\mathbb{F}_q$, 
we consider the symplectic form 
associated to $h_{2n}'$: 
\[
 \omega' \colon V_{\mathbb{F}_{q^2}} \times 
V_{\mathbb{F}_{q^2}} \to \mathbb{F}_q;\ 
(v,v') \mapsto 
\Tr_{\mathbb{F}_{q^2}/\mathbb{F}_q}(h_{2n}'(v,v')). 
\]
Let $\Sp(V,\omega)$ and 
$\Sp(V_{\mathbb{F}_{q^2}},\omega')$
be the isometry groups of $(V,\omega)$ and $(V_{\mathbb{F}_{q^2}},\omega')$, respectively. 
We have 
$\Sp(V,\omega) \subset \oU (V_{\mathbb{F}_{q^2}},h_{2n}')
\subset \Sp(V_{\mathbb{F}_{q^2}},\omega')$. 

We put $W=\mathbb{F}_{q^2}$. 
Let $h_1 \colon W \times W \to \mathbb{F}_{q^2};\ 
(x,y) \mapsto x^q y$. 
We have $\oU (W,h_1)=\mu_{q+1}$. 
Under the natural isomorphism 
$V \otimes_{\bF_q} W \simeq V_{\bF_{q^2}}$, the skew-hermitian form 
$\omega \otimes h_1$ corresponds to $h_{2n}'$. 
This induces a morphism 
\begin{equation}\label{sub}
\Sp(V,\omega) \times \oU (W_,h_1) \hookrightarrow 
\oU (V_{\mathbb{F}_{q^2}},h_{2n}'). 
\end{equation}

We have natural actions of 
$\Gal (\bF_{q^2}/\bF_q) \simeq \bZ/2\bZ$ on 
$\oU (W,h_1)$ and $\oU (V_{\mathbb{F}_{q^2}},h_{2n}')$. 
Then the homomorphism \eqref{sub} extends naturally 
to 
\[
 \Sp(V_,\omega) \times (\oU (W,h_1) \rtimes 
 (\mathbb{Z}/2\mathbb{Z})) \hookrightarrow 
 \oU (V_{\mathbb{F}_{q^2}},h_{2n}') \rtimes (\mathbb{Z}/2\mathbb{Z}). 
\] 
The $\mathbb{F}_q$-automorphism 
$F \colon V_{\mathbb{F}_{q^2}} \to V_{\mathbb{F}_{q^2}};\ 
 (v_i) \mapsto (v_i^q)$ 
preserves $\omega'$. We regard this as an element of 
$\Sp(V_{\mathbb{F}_{q^2}},\omega')$. 
Then we have the injective homomorphism 
\begin{equation}\label{phi}
 \oU (V_{\mathbb{F}_{q^2}},h_{2n}') \rtimes (\mathbb{Z}/2\mathbb{Z}) \hookrightarrow \Sp(V_{\mathbb{F}_{q^2}},\omega');\ (g,k) \mapsto gF^k. 
\end{equation}
Let $\mathbb{Z}/2\mathbb{Z}$ act on 
$H_{\rm c}^{2n}(X_{2n,\overline{\mathbb{F}}_q}',\overline{\mathbb{Q}}_{\ell})(n)[\psi]$
by 
\[
\mathrm{Fr}_q^k \colon H_{\rm c}^{2n}(X_{2n,\overline{\mathbb{F}}_q}',\overline{\mathbb{Q}}_{\ell})(n)[\psi] \to 
H_{\rm c}^{2n}(X_{2n,\overline{\mathbb{F}}_q}',\overline{\mathbb{Q}}_{\ell})(n)[\psi]
\]
for $k \in \mathbb{Z}/2\mathbb{Z}$, 
which is well-defined by Lemma \ref{lem:F2triv}. 
By the natural action of 
$\Gal (\bF_{q^2}/\bF_q)$ on $V_{\bF_{q^2}}$ and $\bF_{q^2}$, 
we have a natural action of 
$\Gal (\bF_{q^2}/\bF_q) \simeq \bZ/2\bZ$ on 
$\HU(h_{2n}')$. 
We write $\pi_{h_{2n}',\psi}$ for the 
$\HU(h_{2n}') \rtimes (\mathbb{Z}/2\mathbb{Z})$-representation 
$H_{\rm c}^{2n}(X_{2n,\overline{\mathbb{F}}_q}',\overline{\mathbb{Q}}_{\ell})(n)[\psi]$. 
Then the action of $1 \in \mathbb{Z}/2\mathbb{Z}$ 
induces an isomorphism between 
$\pi_{h_{2n}',\psi}[\chi]$ 
and 
$\pi_{h_{2n}',\psi}[\chi^{-1}]$ 
for $\chi \in \mu_{q+1}^{\vee} \setminus \{1\}$.

We consider the quadratic form 
$Q \colon W \to \mathbb{F}_q;\ x \mapsto x^{q+1}$. 
We set 
\[
 \oO (W,Q)=\{f \in \mathrm{Aut}_{\mathbb{F}_q}(W) \mid Q(f(w))=Q(w)\ \textrm{for $w \in W$}\}. 
\]
Since $Q(x)=h_1(x,x)$, 
we have a natural inclusion 
$\oU (W,h_1) \hookrightarrow \oO (W,Q)$. 
We regard 
$F_W \colon W \to W;\ x \mapsto x^q$ 
as an element of $\oO (W,Q)$. 
Then we have the isomorphism 
\[
 \oU (W,h_1) \rtimes (\mathbb{Z}/2\mathbb{Z})\xrightarrow{\sim} \oO (W,Q);\ (\xi,i) \mapsto \xi F_W^i, 
\]
since 
$\oO (W,Q)$
is isomorphic to the dihedral group $D_{2(q+1)}$ 
by \cite[Proposition 2.9.1]{KlLiSubcl}. 
We identify 
$\oU (W,h_1) \rtimes (\mathbb{Z}/2\mathbb{Z})$ 
with $\oO (W,Q)$ by the above isomorphism.

We consider the symmetric form $s_1 \colon W \times W \to \mathbb{F}_q;\ (x,y) \mapsto \Tr_{\mathbb{F}_{q^2}/\mathbb{F}_q}(h_1(x,y))$. Note that  
\[
Q(x+y)-Q(x)-Q(y)=s_1(x,y) \quad \textrm{for $x,y \in W$}. 
\]
Hence, we have the natural map $\oO (W,Q) 
\hookrightarrow \oO (W,s_1)$, which is an isomorphism if $q$ is odd. 
We have a natural homomorphism 
$\Sp(V,\omega)\times \oO (W,s_1)
\to \Sp(V_{\mathbb{F}_{q^2}},\omega')$, 
since 
$\omega \otimes s_1$ corresponds to $\omega'$ 
under the natural isomorphism 
$V \otimes_{\bF_q} W \simeq V_{\bF_{q^2}}$. 
We consider the composite  
\[
 \Sp(V,\omega)\times \oO (W,Q)
 \hookrightarrow 
 \Sp(V,\omega)\times \oO (W,s_1)
 \hookrightarrow \Sp(V_{\mathbb{F}_{q^2}},\omega'). 
\]
By the construction, 
we obtain the commutative diagram 
\begin{gather}\label{com}
\begin{aligned}
\xymatrix{
 \Sp(V,\omega)\times \oO (W,Q) \ar[r] & \Sp(V_{\mathbb{F}_{q^2}},\omega') \\
 \Sp(V,\omega) \times (\oU (W,h_1) \rtimes (\mathbb{Z}/2\mathbb{Z})) \ar[r] \ar@{=}[u] & \oU (V_{\mathbb{F}_{q^2}},h_{2n}') \rtimes (\mathbb{Z}/2\mathbb{Z}) \ar[u]
}
\end{aligned}
\end{gather}
where the going up arrows are natural homomorphisms. 
We put 
\[
 \omega_{\mathrm{SpO},n,\psi}=
 \pi_{h_{2n}',\psi}|_{\Sp(V,\omega)\times \oO (W,Q)} . 
\]

For a character $\chi_0 \in \mu_{q+1}^{\vee}$ such
that $\chi_0^2=1$, the $\chi_0$-isotypic part 
$\omega_{\mathrm{SpO},n,\psi}[\chi_0]$ is stable under the action of $\mathbb{Z}/2\mathbb{Z}$. 
For $\kappa \in \{ \pm 1 \}$, 
let $\omega_{\mathrm{SpO},n,\psi}[\chi_0]^{\kappa}$ denote the $\kappa$-eigenspace of $1 \in \mathbb{Z}/2\mathbb{Z}$
on $\omega_{\mathrm{SpO},n,\psi}[\chi_0]$. 
Let $\nu$ be the quadratic character of $\mu_{q+1}$ if $p \neq 2$. 

\begin{lem}\label{dim}
\begin{enumerate}
\item 
Assume that $p=2$. We have 
\begin{align*}
\dim \omega_{\mathrm{SpO},n,\psi}[1_{\mu_{q+1}}]^{\kappa}=\frac{(q^n-1)(q^n-q)}{2(q+1)}+\frac{1+\kappa}{2}q^n, \quad 
\dim \omega_{\mathrm{SpO},n,\psi}[\chi]=\frac{q^{2n}-1}{q+1}
\end{align*}  
for $\kappa \in \{\pm 1 \}$ and $\chi \in \mu_{q+1}^{\vee} \setminus \{1\}$. 
\item\label{enu:dimodd}
Assume that $p \neq 2$. 
We have 
\begin{align*}
\dim \omega_{\mathrm{SpO},n,\psi}[1_{\mu_{q+1}}]^{\kappa}&=\frac{(q^n-1)(q^n-q)}{2(q+1)}+\frac{1+\kappa}{2}q^n, \\  
\dim \omega_{\mathrm{SpO},n,\psi}[\nu]^{\kappa}&=
\frac{q^{2n}-1}{2(q+1)}, \quad 
\dim \omega_{\mathrm{SpO},n,\psi}[\chi]=\frac{q^{2n}-1}{q+1} 
\end{align*}  
for $\kappa \in \{\pm 1 \}$ and 
$\chi \in \mu_{q+1}^{\vee} \setminus \{1,\nu\}$.
\end{enumerate}
\end{lem}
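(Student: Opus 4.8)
The plan is to observe that both the isotypic decomposition $\omega_{\mathit{SpO},n}[\chi]$ and the eigenspace decomposition $\omega_{\mathit{SpO},n}[\chi_0]^{\kappa}$ depend only on the action of $U(W,h_1)=\mu_{q+1}$ and of $\mathrm{Fr}_q$ on $\pi_{\psi,h'}$, not on the $\mathit{Sp}(V,\omega)$-factor. Under \eqref{sub} the group $\mu_{q+1}$ acts by the scalars $\zeta\cdot\mathrm{id}$, which constitute the center of $U(V_{\mathbb{F}_{q^2}},h')$. Hence it suffices to compute, as a $(\mu_{q+1}\times\langle\mathrm{Fr}_q\rangle)$-representation, the numbers $\dim\pi_{\psi,h'}[\chi]$ and, for each self-dual character $\chi_0$ (that is $\chi_0^2=1$, so $\chi_0=1$ if $p=2$ and $\chi_0\in\{1,\nu\}$ if $p\neq 2$), the trace $\Tr(\mathrm{Fr}_q;\pi_{\psi,h'}[\chi_0])$. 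For such $\chi_0$ the operator $\mathrm{Fr}_q$ preserves $\pi_{\psi,h'}[\chi_0]$ and squares to the identity there (by Lemma \ref{ke}), so $\dim\omega_{\mathit{SpO},n}[\chi_0]^{\kappa}=\tfrac12\bigl(\dim\pi_{\psi,h'}[\chi_0]+\kappa\,\Tr(\mathrm{Fr}_q;\pi_{\psi,h'}[\chi_0])\bigr)$.

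First I would compute the isotypic dimensions. Since $\pi_{\psi,h'}\simeq\rho_{\psi,\mathit{HU}(h')}$ as $\mathit{HU}(h')$-representations by Lemma \ref{olds}, Lemma \ref{lem:HWcha} gives $\Tr(\zeta;\pi_{\psi,h'})=(-q)^{N(\zeta\cdot\mathrm{id})}$ for $\zeta\in\mu_{q+1}$, which equals $q^{2n}$ if $\zeta=1$ and $1$ if $\zeta\neq 1$. Fourier inversion over $\mu_{q+1}$ then yields $\dim\pi_{\psi,h'}[\chi]=(q^{2n}+q)/(q+1)$ for $\chi=1$ and $\dim\pi_{\psi,h'}[\chi]=(q^{2n}-1)/(q+1)$ for $\chi\neq 1$; the latter is the asserted value of $\dim\omega_{\mathit{SpO},n}[\chi]$ for $\chi\neq 1$.

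Next I would compute $\Tr(\mathrm{Fr}_q;\pi_{\psi,h'}[\chi_0])$ by the Grothendieck--Lefschetz trace formula applied to the composite of the automorphism $((x_k),z)\mapsto(\zeta x_k,z+a)$ with the geometric Frobenius, combined with the $\psi$- and $\chi_0$-projectors. Because $H_{\rm c}^i(X_{2n,\overline{\mathbb{F}}_q}',\overline{\mathbb{Q}}_{\ell})[\psi]=0$ for $i\neq 2n$ by Lemma \ref{olds}, summing the fixed-point counts against $\psi(a)^{-1}$ isolates the even degree $2n$ with a $+$ sign. A point is fixed precisely when $\zeta x_k^q=x_k$ for all $k$ and $z-z^q=a$; the first set of conditions forces $\sum_{k=1}^n(x_k^q x_{n+k}-x_{n+k}^q x_k)=0$, so by the defining equation of $X_{2n}'$ only $a=0$ contributes. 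For $a=0$ each $x_k$ ranges over the $\mathbb{F}_q$-line $\{x\mid \zeta x^q=x\}$ of cardinality $q$ and $z$ over $\mathbb{F}_q$, giving $q^{2n+1}$ fixed points. After dividing by $q$ from the $\psi$-projection and applying the Tate twist by $n$, I obtain $\Tr(\zeta\cdot\mathrm{Fr}_q;\pi_{\psi,h'})=q^n$ for every $\zeta$. Fourier inversion over $\mu_{q+1}$ then gives $\Tr(\mathrm{Fr}_q;\pi_{\psi,h'}[\chi_0])=q^n$ if $\chi_0=1$ and $0$ if $\chi_0=\nu$.

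Finally I would substitute these into the eigenspace formula and verify the two elementary identities $\tfrac12\bigl((q^{2n}+q)/(q+1)+\kappa q^n\bigr)=(q^n-1)(q^n-q)/(2(q+1))+\tfrac{1+\kappa}{2}q^n$ and $\tfrac12\cdot(q^{2n}-1)/(q+1)=(q^{2n}-1)/(2(q+1))$, which settle the cases $\chi_0=1$ and $\chi_0=\nu$ respectively. The main obstacle is not the geometry or the algebra but the trace-formula bookkeeping: checking that $\mathrm{Fr}_q$ commutes with the $\psi$- and $\mu_{q+1}$-projectors (so that the combined trace really equals the weighted fixed-point count), that the $\psi$-projected cohomology is concentrated in the even degree $2n$ (so no signs intervene), and that the geometric-Frobenius normalization together with the Tate twist by $n$ produces exactly $q^n$.
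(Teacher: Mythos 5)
Your proof is correct, but it takes a genuinely different route from the paper's, most notably for the Frobenius eigenspace computation. The paper never invokes a trace formula: it transports Lemma \ref{special} through the $\mu_{q+1}$-equivariant isomorphism $X_{2n,\mathbb{F}_{q^2}}\simeq X'_{2n,\mathbb{F}_{q^2}}$ to get the isotypic dimensions, and then handles the $\mathbb{Z}/2\mathbb{Z}$-action by the K\"unneth decomposition $\omega_{\mathit{SpO},n}[1]=\bigoplus_{(\chi_k)\in A}V_{(\chi_k)}$, $V_{(\chi_k)}=\bigotimes_{k=1}^n\omega_{\mathit{SpO},1}[\chi_k]$, where $A$ is the set of $n$-tuples with $\prod_k\chi_k=1$: by Lemma \ref{ke} the factor $V_{\mathbf 1}$ has dimension $q^n$ with trivial Frobenius action, while every other summand (and, for $p\neq 2$, every summand of $\omega_{\mathit{SpO},n}[\nu]$) has balanced $\pm1$-eigenspaces, which yields $\dim\omega_{\mathit{SpO},n}[1]^\kappa=\tfrac12(a_{n,1}+\kappa q^n)$ directly. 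Your replacement of this step by the Grothendieck--Lefschetz count on $X'_{2n}$ is sound: the fixed-point analysis (only $a=0$ contributes, giving $q^{2n+1}$ points), the concentration of $H_{\rm c}^{\ast}[\psi]$ in degree $2n$ from Lemma \ref{olds}, and the normalizations ($\psi$-projector contributing $1/q$, Tate twist contributing $q^{-n}$) combine correctly to $\Tr(\zeta\cdot\mathrm{Fr}_q;\pi_{\psi,h'})=q^n$ for all $\zeta\in\mu_{q+1}$, and Fourier inversion over $\mu_{q+1}$ (legitimate for self-dual $\chi_0$, since $\mathrm{Fr}_q$ carries $[\chi]$ to $[\chi^{-1}]$) then recovers exactly the paper's numbers; note your trace identity is in effect the analogue of Lemma \ref{spd}, i.e.\ of \cite[Lemma 2.1]{DesExcSp}, proved here geometrically. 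Likewise your derivation of the isotypic dimensions from the character formula in Lemma \ref{lem:HWcha} plus Fourier inversion is equivalent in content to the paper's use of Lemma \ref{special} (both ultimately rest on \cite{GerWeil}). What the two approaches buy: the paper's K\"unneth argument stays inside the sheaf-theoretic toolkit already established (Lemma \ref{ke}) and needs no trace-formula conventions at all; yours gives the stronger, uniform statement $\Tr(\zeta\,\mathrm{Fr}_q;\pi_{\psi,h'})=q^n$, at the price of the bookkeeping you flag (the $(g^\ast)^{-1}$ convention, geometric versus arithmetic Frobenius, and the equivariant fixed-point formula in the style of \cite[Theorem 3.2]{DeLuRep}), all of which does go through.
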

\begin{proof}
We set 
$a_{n,\chi}=\dim \omega_{\mathrm{SpO},n,\psi}[\chi]$ 
for $\chi \in \mu_{q+1}^{\vee}$. 
We have a $\mu_{q+1}$-equivariant 
isomorphism $X_{2n,\mathbb{F}_{q^2}} \simeq X_{2n,\mathbb{F}_{q^2}}'$. 
Hence, we have 
\begin{equation}\label{n}
a_{n,\chi}=
\begin{cases}
\displaystyle \frac{q^{2n}+q}{q+1} & \textrm{if $\chi=1$}, \\[0.3cm]
\displaystyle  \frac{q^{2n}-1}{q+1} & \textrm{if $\chi \in \mu_{q+1}^{\vee} \setminus \{1\}$}
\end{cases}
\end{equation}
by Lemma \ref{special}. 

Let 
\[
 A=\biggl\{ (\chi_k)_{1 \leq k \leq n} \in (\mu_{q+1}^{\vee})^n \biggm| \prod_{k=1}^n \chi_k=1 \biggr\}. 
\]
Let $\mathbf{1} \in A$ denote the element 
$(\chi_k)_{1 \leq k \leq n}$ with $\chi_k=1$ for any 
$1 \leq k \leq n$. 
For each $(\chi_k)_{1 \leq k \leq n} \in A$, 
let $V_{(\chi_k)_{1 \leq k \leq n}}$ denote the subspace 
$\bigotimes_{k=1}^n \omega_{\mathrm{SpO},1,\psi}[\chi_k] 
\subset \omega_{\mathrm{SpO},n,\psi}[1_{\mu_{q+1}}]$. 
By Lemma \ref{ke}, 
we have 
$\dim \omega_{\mathrm{SpO},1,\psi}[1_{\mu_{q+1}}]=q$ 
and the action of 
$\mathbb{Z}/2\mathbb{Z}$ on $\omega_{\mathrm{SpO},1,\psi}[1_{\mu_{q+1}}]$ is trivial. 
Further, if $p \neq 2$, the dimensions of 
the $1$-eigenspace and the $(-1)$-eigenspace in 
$\omega_{\mathrm{SpO},1,\psi}[\nu]$ are same by Lemma \ref{ke}. 
Therefore, we have $\dim V_{\mathbf{1}}=q^n$ 
and the action of $\mathbb{Z}/2\mathbb{Z}$ on $V_{\mathbf{1}}$ 
is trivial. Further, 
the dimensions of 
the $1$-eigenspace and the $(-1)$-eigenspace in 
$\bigoplus_{(\chi_k)_{1 \leq i \leq n} \in A \setminus \{ \mathbf{1} \}}
V_{(\chi_k)_{1 \leq k \leq n}}$ 
are same. 
Hence, the dimension of 
$\omega_{\mathrm{SpO},n,\psi}[1_{\mu_{q+1}}]^{\kappa}$
equals 
\[
 \frac{a_{n,1} +\kappa \dim V_{\mathbf{1}}}{2}=
 \frac{1}{2}\left(\frac{q^{2n}+q}{q+1}+\kappa q^n\right)
 =\frac{(q^n-1)(q^n-q)}{2(q+1)} +\frac{1+\kappa}{2} q^n . 
\]
The second equality in the claim \ref{enu:dimodd} is proved similarly. 
\end{proof}

\begin{rem}
It is possible to calculate 
values of characters of representations in 
Lemma \ref{dim} using the geometric constructions in this paper and 
the Grothendieck--Lefschetz trace formula. 
See \cite[Proposition 4.9]{ITModlW} for an example of such a calculation. 
\end{rem}

\subsubsection{Compatibility}\label{sssec:comp}
In \S \ref{sssec:comp}, we always assume $p \neq 2$. 
Then we can construct a representation of 
the dual pair 
$\Sp(V,\omega)\times \oO (W,Q)$ 
as the restriction of the Weil representation of 
$\Sp(V_{\mathbb{F}_{q^2}},\omega')$. 
We show that the two constructions are compatible. 
We put 
\[
 \oH(V_{\bF_{q^2}},\omega')=V_{\bF_{q^2}} \times \bF_q 
\]
with multiplication 
\[
 (v,a)(v',a')=\left( v+v',a+a'+\frac{1}{2} \omega' (v,v') \right). 
\]
Then, $\Sp(V_{\mathbb{F}_{q^2}},\omega')$ 
acts on $\oH(V_{\bF_{q^2}},\omega')$ by 
$(v,a) \mapsto (gv,a)$ for 
$g \in \Sp(V_{\mathbb{F}_{q^2}},\omega')$ and 
$(v,a) \in \oH(V_{\bF_{q^2}},\omega')$. 
We put 
\[
 \HSp(\omega')=\oH(V_{\bF_{q^2}},\omega') \rtimes 
 \Sp(V_{\mathbb{F}_{q^2}},\omega'). 
\]
Let $\rho_{\HSp(\omega'),\psi}$ 
be the Heisenberg--Weil representation 
of $\HSp(\omega')$ associated to $\psi$ (\cf \cite[Theorem 2.4(a')]{GerWeil} and \cite[2.2]{GuHaGeoW}). 
We write 
$\rho_{\Sp(\omega'),\psi}$
for the restriction of $\rho_{\HSp(\omega'),\psi}$ to 
$\Sp(V_{\mathbb{F}_{q^2}},\omega')$. 
The representation $\rho_{\Sp(\omega'),\psi}$ 
is called the Weil representation of 
$\Sp(V_{\mathbb{F}_{q^2}},\omega')$ associated to 
$\psi$. 
We put 
\[
 \HSpU(h_{2n}')= 
 \oH(V_{\mathbb{F}_{q^2}},h_{2n}') \rtimes 
 (\Sp(V,\omega) \times \oU (W,h_1)). 
\]
By \eqref{phi} and the isomorphism 
\[
 \oH(V_{\mathbb{F}_{q^2}},h_{2n}') \xrightarrow{\sim} 
 \oH(V_{\bF_{q^2}},\omega');\ 
 (v,a) \mapsto \left( v,a -\frac{1}{2} h_{2n}'(v,v) \right) , 
\]
we have a morphism 
$\HU(h_{2n}') \rtimes (\bZ/2\bZ) \to \HSp(\omega')$. 
We have the commutative diagram 
\[
\xymatrix{
 \HSpU(h_{2n}') \rtimes (\bZ/2\bZ) \ar[r] 
 & \HU(h_{2n}') \rtimes (\bZ/2\bZ) \ar[r] & \HSp(\omega') \\
 \HSpU(h_{2n}') \ar[r] \ar[u] 
 & \HU(h_{2n}') \ar[u] 
 &  
}
\]
where the arrows in the square are natural homomorphisms. 
Let $F \in \Sp(V_{\mathbb{F}_{q^2}},\omega')$ be the $q$-th power Frobenius. 
Then $F$ takes $\rho_{\Sp(\omega'),\psi}[\chi]$ isomorphically to 
$\rho_{\Sp(\omega'),\psi}[\chi^{-1}]$
for $\chi \in \mu_{q+1}^{\vee}$. 
Let $\chi \in \mu_{q+1}^{\vee}$ such that 
$\chi^2=1$. 
For $\kappa \in \{ \pm 1 \}$, 
let $\rho_{\Sp(\omega'),\psi}[\chi]^{\kappa}$
denote the $\kappa$-eigenspace of $F$ on 
$\rho_{\Sp(\omega'),\psi}[\chi]$. 
\begin{lem}\label{spd}
Let $\eta_0$ be the quadratic character of $\mathbb{F}_q^{\times}$. 
Then we have $\Tr \rho_{\Sp(\omega'),\psi} (F)=(\eta_0(-1) q)^n$. 
\end{lem}
\begin{proof}
We imitate arguments in \cite[Lemma 2.1]{DesExcSp}. 
Let $L=\mathbb{F}_q^n \oplus \{\mathbf{0}\}$
and $L'=\{\mathbf{0}\} \oplus \mathbb{F}_q^n$ in $V$. 
These are maximal totally isotropic subspaces with respect to $\omega$. Then $\rho_{\Sp(\omega'),\psi}$ is realized in the space  $\overline{\mathbb{Q}}_{\ell}[L'_{\mathbb{F}_{q^2}}]$ of $\overline{\mathbb{Q}}_{\ell}$-valued 
functions on $L'_{\mathbb{F}_{q^2}}$ as in 
\cite[(3) in the proof of Theorem 2.4]{GerWeil}. 
Let $\sigma \colon \mathbb{F}_{q^2} \to \mathbb{F}_{q^2};\ x \mapsto x^q$. 
Since $F$ stabilizes $L_{\mathbb{F}_{q^2}}$ and $L'_{\mathbb{F}_{q^2}}$, we have 
$(\rho_{\Sp(\omega'),\psi}(F)f)(v)=\eta_0(-1)^n f((1 \otimes \sigma) v)$ for $v \in L'_{\mathbb{F}_{q^2}}$ by \cite[(2.7)]{GerWeil}. Hence $\Tr \rho_{\Sp(\omega'),\psi} (F)$ equals $\eta_0(-1)^n$ times the number of the fixed points of $1 \otimes \sigma$ on $L'_{\mathbb{F}_{q^2}}$. 
\end{proof}

\begin{lem}\label{lem:resHU}
The restriction $\rho_{\HSp(\omega'),\psi}|_{\HU(h_{2n}')}$ 
is isomorphic to 
$\rho_{\HU(h_{2n}'),\psi} \otimes (\nu \circ \det)$. 
\end{lem}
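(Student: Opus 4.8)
The plan is to compare the two $\mathit{HU}(h')$-representations $\rho_{\psi,\mathit{HSp}(\omega')}|_{\mathit{HU}(h')}$ and $\rho_{\psi,\mathit{HU}(h')}$ by first matching their restrictions to the Heisenberg group and then pinning down the resulting character of the unitary group by a single trace computation. First I would use the isomorphism $H(V_{\bF_{q^2}},h') \xrightarrow{\sim} H(V_{\bF_{q^2}},\omega')$ displayed above, which is the identity on the common center $\{0\}\times\bF_q$. Hence $\rho_{\psi,\mathit{HSp}(\omega')}|_{H(V_{\bF_{q^2}},h')}$ is an irreducible representation of $H(V_{\bF_{q^2}},h')$ with central character $\psi$, so by the Stone--von Neumann theorem it is isomorphic to $\rho_{\psi,H(h')}$; the dimensions agree, both being $q^{2n}$. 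Consequently $\rho_{\psi,\mathit{HSp}(\omega')}|_{\mathit{HU}(h')}$ is an irreducible extension of $\rho_{\psi,H(h')}$ to $\mathit{HU}(h')$, exactly as is $\rho_{\psi,\mathit{HU}(h')}$ from Lemma \ref{lem:HWcha}. Since any two irreducible extensions of a fixed irreducible $H(h')$-representation to $\mathit{HU}(h')$ differ by a character of $U(h')$ (the one by which $U(h')$ acts on the one--dimensional intertwining space between the two restrictions), there is a character $\eta$ of $U(h')=U_{2n}(\bF_q)$ with
\[
 \rho_{\psi,\mathit{HSp}(\omega')}|_{\mathit{HU}(h')} \simeq \rho_{\psi,\mathit{HU}(h')} \otimes \eta.
\]

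Next I would show $\eta$ factors through $\det\colon U_{2n}(\bF_q)\to\mu_{q+1}$. As recalled in the proof of Theorem \ref{old}, every character of $U_m(\bF_q)$ factors through $\det$ unless $(m,q)=(2,2)$; since $p$ is odd this exceptional case does not occur, so $\eta=\chi\circ\det$ for some $\chi\in\mu_{q+1}^{\vee}$. It then remains to identify $\chi=\nu$, which I would do by evaluating both sides at one well--chosen element. Because $h'$ represents nonzero values, I would pick an anisotropic vector $w$ and let $g\in U(h')$ act by a generator $\zeta$ of $\mu_{q+1}$ on the nondegenerate line $\bF_{q^2}w$ and by the identity on $w^{\perp}$. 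Then $\det g=\zeta$ and $N(g)=\dim_{\bF_{q^2}}\Ker(g-1)=2n-1$, so by Lemma \ref{lem:HWcha} the right--hand side has trace $(-q)^{2n-1}\chi(\zeta)=-q^{2n-1}\chi(\zeta)$.

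For the left--hand side I would use the multiplicativity of the symplectic Weil character under orthogonal decompositions of the symplectic space. The splitting $V_{\bF_{q^2}}=\bF_{q^2}w\oplus w^{\perp}$ is $\omega'$--orthogonal and $g$--stable, with $g$ acting as the identity on the $(4n-2)$--dimensional factor $w^{\perp}$ and as multiplication by $\zeta$ on the $2$--dimensional $\bF_q$--factor $\bF_{q^2}w$, the latter being an elliptic element of $\mathit{Sp}(\bF_{q^2}w)\cong\mathit{SL}_2(\bF_q)$. Thus $\Tr\rho_{\psi,\mathit{Sp}(\omega')}(g)=q^{2n-1}\cdot c(\zeta)$, where $c(\zeta)$ is the value at the elliptic element $\zeta$ of the Weil character of $\mathit{Sp}(\bF_{q^2}w)\cong\mathit{SL}_2(\bF_q)$. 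Comparing the two expressions yields $\chi(\zeta)=-c(\zeta)$, and since $\zeta$ generates $\mu_{q+1}$ this determines $\chi$ once $c(\zeta)$ is known.

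The one genuinely nonformal input is this elliptic $\mathit{SL}_2$ character value, $c(\zeta)=-\nu(\zeta)$, which I expect to be the main obstacle: it has to be extracted either from G\'{e}rardin's explicit character formula for the symplectic Weil representation in \cite{GerWeil} or from the $\mathit{SL}_2(\bF_q)$ character table, with care taken that the normalization matches the one fixed by the realization $H_{\rm c}^{2n}(X_{2n,\overline{\bF}_q}',\overline{\bQ}_\ell)(n)[\psi]$. Granting $c(\zeta)=-\nu(\zeta)$, the comparison gives $\chi(\zeta)=\nu(\zeta)$ and hence $\chi=\nu$, completing the proof; as a sanity check, this normalization is the one consistent with the value $q^n$ computed in Lemma \ref{spd} for the Frobenius $F$, whose fixed space $\Ker(F-1)$ has $\bF_q$-dimension $2n$.
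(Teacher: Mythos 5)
Your proof is correct, but it takes a genuinely different route from the paper's, whose entire proof is a one-line citation: the restriction identity is precisely what G\'erardin's formula \cite[Theorem 3.3(a")]{GerWeil} provides, and matching it against the characterization of $\rho_{\psi,\mathit{HU}(h')}$ in Lemma \ref{lem:HWcha} finishes the argument. You instead rederive the identity from softer inputs: Stone--von Neumann plus Schur's lemma reduce the comparison to a twist by a character $\eta$ of $U_{2n}(\bF_q)$ (using that the isomorphism $H(V_{\bF_{q^2}},h')\simeq H(V_{\bF_{q^2}},\omega')$ is the identity on the common center, so the central characters match); the facts recalled in the proof of Theorem \ref{old} force $\eta=\chi\circ\det$, with no exceptional case since $q$ is odd; and a single trace evaluation at an element acting by a generator $\zeta$ of $\mu_{q+1}$ on an anisotropic line pins down $\chi$, after multiplicativity of the Weil representation under $\omega'$-orthogonal decompositions reduces the left-hand side to the value $c(\zeta)$ of the $\mathit{SL}_2(\bF_q)$ Weil character at a regular elliptic element. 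All of these steps are sound, the bookkeeping $(-1)^{2n}(-q)^{2n-1}\chi(\zeta)=q^{2n-1}c(\zeta)$ is right, and your claimed value $c(\zeta)=-\nu(\zeta)$ is correct; it can be read off from the symplectic character formula (G\'erardin's symplectic theorem, or \cite{HoweCharW}) or from the $\mathit{SL}_2(\bF_q)$ character table, so your argument is not circular --- it trades the wholesale citation of G\'erardin's unitary-restriction theorem for one classical character value. What your route buys is transparency about the minimal nonformal input, and it mirrors the paper's own technique elsewhere: the proofs of Theorem \ref{old} and Proposition \ref{prop:compa} pin down character twists by exactly this kind of trace and dimension bookkeeping (indeed your closing sanity check against Lemma \ref{spd} is the same comparison the paper makes there). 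One point deserving care in the multiplicativity step: for $q=3$ the group $\mathit{SL}_2(\bF_3)$ is not perfect, so the extension of the Heisenberg representation to the corresponding semidirect product is not unique, and both the tensor-product property and the value $c(\zeta)$ must be taken for the canonical normalization of \cite[2.2]{GuHaGeoW} fixed in the paper --- which is exactly the normalization issue you flag.
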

\begin{proof}
This follows from \cite[Theorem 3.3(a")]{GerWeil} and Lemma \ref{lem:HWcha}. 
\end{proof}
We regard the quadratic character $\oO(W,Q) \to \oO(W,Q)/\oU(W,h_1) \cong \bZ/2\bZ \hookrightarrow 
\overline{\mathbb{Q}}_{\ell}^{\times}$ 
as a character of $\Sp(V,\omega) \times \oO (W,Q)$ naturally, which we denote by $\kappa_0$. 
\begin{prop}\label{prop:compa}
We have an isomorphism 
$\rho_{\Sp(\omega'),\psi}|_{\Sp(V,\omega) \times \oO (W,Q)} 
\simeq \omega_{\mathrm{SpO},n,\psi} \otimes 
\kappa^{n(q-1)/2}_0$. 
\end{prop}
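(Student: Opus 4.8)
The plan is to compare the two representations first as representations of the larger group $\widetilde{U} = U(V_{\mathbb{F}_{q^2}},h') \rtimes (\mathbb{Z}/2\mathbb{Z})$ and only at the very end restrict to the dual pair. Both $\rho_{\psi,\mathit{Sp}(\omega')}$ (via the embedding \eqref{phi}) and $\pi_{\psi,h'}$ (by its very construction) are $\widetilde{U}$-representations, and by the commutative diagram \eqref{com} the group $\mathit{Sp}(V,\omega) \times O(W,Q)$ sits inside $\widetilde{U}$. So I would aim at the sharper statement
\[
 \rho_{\psi,\mathit{Sp}(\omega')}|_{\widetilde{U}} \simeq \pi_{\psi,h'} \otimes (\nu \circ \det),
\]
where $\nu \circ \det$ is extended to $\widetilde{U}$ so that the nontrivial element of $\mathbb{Z}/2\mathbb{Z}$ acts trivially; this is legitimate because $\det(\mathrm{Fr}_q(g)) = \det(g)^q = \det(g)^{-1}$ in $\mu_{q+1}$ and $\nu$ is quadratic, so $\nu \circ \det$ is Frobenius-invariant. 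The proposition will then follow once I check that $\nu \circ \det$ becomes trivial on $\mathit{Sp}(V,\omega) \times O(W,Q)$.

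First I would settle the restriction to the unitary group itself. By Lemma \ref{lem:resHU} we have $\rho_{\psi,\mathit{Sp}(\omega')}|_{U(V_{\mathbb{F}_{q^2}},h')} \simeq \omega_{U(V_{\mathbb{F}_{q^2}},h')} \otimes (\nu \circ \det)$, while Lemma \ref{olds} gives $\pi_{\psi,h'}|_{\mathit{HU}(h')} \simeq \rho_{\psi,\mathit{HU}(h')}$ and hence $\pi_{\psi,h'}|_{U(V_{\mathbb{F}_{q^2}},h')} \simeq \omega_{U(V_{\mathbb{F}_{q^2}},h')}$. Thus the two $\widetilde{U}$-representations already agree on the index-two subgroup $U(V_{\mathbb{F}_{q^2}},h')$ after the twist. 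To see the twist disappears on the dual pair, recall that under $V \otimes_{\mathbb{F}_q} W \simeq V_{\mathbb{F}_{q^2}}$ an element $(g,\zeta) \in \mathit{Sp}(V,\omega) \times U(W,h_1)$ acts as $g \otimes \zeta$, whose determinant over $\mathbb{F}_{q^2}$ is $\zeta^{2n}\det_{\mathbb{F}_q}(g) = \zeta^{2n}$ since symplectic matrices have determinant one; as $\nu$ has order two, $\nu(\zeta^{2n}) = 1$, so $\nu \circ \det$ is trivial on $\mathit{Sp}(V,\omega) \times U(W,h_1)$, and by the chosen extension it is trivial on all of $\mathit{Sp}(V,\omega) \times O(W,Q)$.

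The remaining, and in my view the hardest, step is to match the action of the nontrivial element of $\mathbb{Z}/2\mathbb{Z}$ on the two extensions, that is, to show the geometric Frobenius $\mathrm{Fr}_q$ acting on $\pi_{\psi,h'}$ corresponds to the algebraic Frobenius $F \in \mathit{Sp}(V_{\mathbb{F}_{q^2}},\omega')$ acting on $\rho_{\psi,\mathit{Sp}(\omega')}$. Since both sides restrict to the same completely reducible $U(V_{\mathbb{F}_{q^2}},h')$-representation $\bigoplus_{\chi \in \mu_{q+1}^{\vee}} \omega_{U(V_{\mathbb{F}_{q^2}},h')}[\chi]$, and the nontrivial element interchanges the $\chi$- and $\chi^{-1}$-isotypic parts, the extension is rigid on each swapped pair $\{\chi,\chi^{-1}\}$ with $\chi \neq \chi^{-1}$ and is ambiguous only by a sign on the two self-dual parts $\chi \in \{1,\nu\}$. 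I would pin these signs down by comparing, on each self-dual part, the dimensions of the $\pm 1$-eigenspaces of the involution. On the geometric side these are exactly the numbers of Lemma \ref{dim}: the $\chi = \nu$ eigenspaces are balanced, while on $\chi = 1$ the $(+1)$-eigenspace exceeds the $(-1)$-eigenspace by $q^n$. For the Weil side I would invoke Lemma \ref{spd}, giving $\Tr \rho_{\psi,\mathit{Sp}(\omega')}(F) = q^n$; the swapped pairs contribute $0$ to this trace, so the total $q^n$ is split between the $\chi = 1$ and $\chi = \nu$ parts, and once one knows the $\chi = \nu$ contribution vanishes the imbalance on $\chi = 1$ is forced to equal $q^n$, matching the geometric side. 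Equality of all eigenspace dimensions then yields the isomorphism of $\widetilde{U}$-representations, and restricting to the dual pair and discarding $\nu \circ \det$ gives $\rho_{\psi,\mathit{Sp}(\omega')}|_{\mathit{Sp}(V,\omega) \times O(W,Q)} \simeq \omega_{\mathit{SpO},n}$.

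The principal obstacle is precisely this Frobenius comparison on the self-dual isotypic components. The one point that does not follow from Lemma \ref{spd} alone is the vanishing of the $\chi = \nu$ contribution to the trace of $F$ on the Weil side; to separate the $\chi = 1$ and $\chi = \nu$ pieces I expect to need an auxiliary trace computation of $\rho_{\psi,\mathit{Sp}(\omega')}$ on the elements $\zeta F$ with $\zeta \in \mu_{q+1}$ (projecting onto $\mu_{q+1}$-isotypic parts), the counterpart on the cohomological side being the eigenvalue symmetry already visible in Lemma \ref{ke}. The consistency of these two independent trace computations, the representation-theoretic one of Lemma \ref{spd} and the cohomological one behind Lemma \ref{ke}, is what makes the sign come out right and is the heart of the argument.
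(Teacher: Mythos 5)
Your first two paragraphs are sound and use the same ingredients as the paper (Lemma \ref{lem:resHU}, Lemma \ref{olds}, the triviality of $\nu\circ\det$ on the dual pair, Lemma \ref{dim}, Lemma \ref{spd}), but the way you organize the Schur-type step creates a gap that your proposed fix cannot close. By discarding the Heisenberg group and comparing the two representations only over $\widetilde{U}=U(V_{\bF_{q^2}},h')\rtimes(\bZ/2\bZ)$, you replace an irreducible situation by a completely reducible one: the restriction to $U(V_{\bF_{q^2}},h')$ splits into the $q+1$ irreducible pieces $[\chi]$, and the extension ambiguity becomes one independent sign on each self-dual piece, say $\epsilon_1$ on $[1]$ and $\epsilon_\nu$ on $[\nu]$. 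Your trace argument does determine $\epsilon_1$; moreover, contrary to what you write, the vanishing of the $[\nu]$-contribution to $\Tr\rho_{\psi,\mathit{Sp}(\omega')}(F)$ is automatic, since the Weil extension on $[\nu]$ differs from the geometric one by at most a sign and the geometric one has balanced eigenspaces by Lemma \ref{dim}. The genuine gap is $\epsilon_\nu$ itself, and it is invisible to every trace you propose: on $[\nu]$ a scalar $\zeta\in\mu_{q+1}$ acts by $\nu(\zeta)$, so $\Tr(\zeta F|_{[\nu]})=\nu(\zeta)\Tr(F|_{[\nu]})=0$ for either choice of sign, whence $\Tr\rho(\zeta F)=\Tr(F|_{[1]})$ for all $\zeta$ and the family of traces at $\zeta F$ carries no information about $\epsilon_\nu$. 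Your final inference (``equality of all eigenspace dimensions yields the isomorphism'') fails exactly here: the two extensions $\pm\sigma$ on $[\nu]$ have identical (balanced) eigenspace dimensions but are non-isomorphic $\widetilde{U}$-representations, since a $U$-equivariant intertwiner on the irreducible piece $[\nu]$ is scalar and cannot anticommute with $\sigma$. And the sign matters for the proposition: flipping it exchanges $\omega_{\mathit{SpO},n}[\nu]^{+}$ and $\omega_{\mathit{SpO},n}[\nu]^{-}$, which are the distinct irreducible $\mathit{Sp}(V,\omega)$-representations $\Theta_n(\sigma_{\nu,+})$ and $\Theta_n(\sigma_{\nu,-})$ of Proposition \ref{prop:irr}. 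Resolving $\epsilon_\nu$ inside your framework would require character values $\Tr\rho_{\psi,\mathit{Sp}(\omega')}(gF)$ at non-central $g$, a much harder computation than Lemma \ref{spd}.

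The paper avoids this bifurcation entirely by keeping the Heisenberg group in the picture: $\rho_{\psi,\mathit{HSp}(\omega')}$ restricted to $\mathit{HSpU}(h')$ is irreducible (it is already irreducible on $H(V_{\bF_{q^2}},h')$ by Stone--von Neumann), and by Lemma \ref{lem:resHU} it agrees there with $\pi_{\psi,h'}$, the $\nu\circ\det$ twist dying on the dual pair exactly as you argue. Schur's lemma then says the two extensions to $\mathit{HSpU}(h')\rtimes(\bZ/2\bZ)$ differ by a \emph{single} character $\eta$ of $\bZ/2\bZ$, and this one global sign is detected by the nonvanishing total trace $q^n$ of the involution (Lemma \ref{dim} on the geometric side, Lemma \ref{spd} on the Weil side). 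If you restore the Heisenberg group to your argument, the rest of your reasoning goes through; without it, the sign on $[\nu]$ stays out of reach.
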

\begin{proof}
By Lemma \ref{lem:resHU}, 
the restriction $\rho_{\HSp(\omega'),\psi}|_{\HSpU(h_{2n}')}$ 
is isomorphic to 
$\pi_{h_{2n}',\psi}|_{\HSpU(h_{2n}')}$. 
Hence there exists a character 
$\eta \in (\mathbb{Z}/2\mathbb{Z})^{\vee}$ such that 
\[
 \rho_{\HSp(\omega'),\psi}|_{\HSpU(h_{2n}') \rtimes (\bZ/2\bZ)} 
 \simeq \pi_{h_{2n}',\psi}|_{\HSpU(h_{2n}') \rtimes (\bZ/2\bZ)} 
 \otimes \eta 
\]
by Schur's lemma. 
By Lemma \ref{dim} \ref{enu:dimodd} 
and Lemma \ref{spd}, we have $\eta=\kappa^{n(q-1)/2}_0$. 
\end{proof}

\subsection{Howe correspondence}\label{cation}
Let $\Irr \oO (W,Q)$ be the set of isomorphism classes of 
irreducible representations of $\oO (W,Q)$ 
over $\overline{\mathbb{Q}}_{\ell}$. 
For $\sigma \in \Irr \oO (W,Q)$, 
we put 
\[
 \Theta_n (\sigma) =\Hom_{\oO (W,Q)} (\sigma,\omega_{\mathrm{SpO},n,\psi}) 
\]
as an $\Sp (V,\omega)$-representation. 
We call $\sigma \mapsto \Theta_n (\sigma)$ 
the Howe correspondence for 
$\Sp(V,\omega) \times \oO (W,Q)$. 
This is compatible with the usual Howe correspondence,  
which is defined for $p \neq 2$, up to an explicit sign as 
Proposition \ref{prop:compa}. 

We identify $\oO (W,Q)$ with 
$\oU (W,h_1) \rtimes (\mathbb{Z}/2\mathbb{Z})$
as before. 
We set 
$\mu^{\vee}=\{\chi_0 \in \mu_{q+1}^{\vee} \mid \chi_0^2=1\}$. 
For a pair 
$(\chi_0,\kappa ) \in \mu^{\vee} \times \{ \pm 1 \}$, 
the map 
$\sigma_{\chi_0,\kappa} \colon \oO (W,Q) \to 
 \overline{\mathbb{Q}}_{\ell}^{\times};\ 
 (x,m) \mapsto \chi_0(x) \kappa^m$ for $x \in \mu_{q+1}$
and $m \in \mathbb{Z}/2\mathbb{Z}$ 
is a character. 
For a character $\chi \in \mu_{q+1}^{\vee}\setminus \mu^{\vee}$, 
the two-dimensional representation 
$\sigma_{\chi}=\mathrm{Ind}_{\oU (W,h_1)}^{\oO (W,Q)} \chi$ 
is irreducible. Note that 
$\sigma_{\chi} \simeq \sigma_{\chi^{-1}}$ as 
$\oO (W,Q)$-representations. 
Any irreducible representation of $\oO (W,Q)$
is isomorphic to the one of these representations. 
Note that the orthogonal algebraic group defined by 
$W$ and $Q$ is not connected. 
We say that an irreducible representation 
of $\oO (W,Q)$ is unipotent if 
its restriction to $\oU (W,h_1)$ contains a unipotent representation 
(\cf \cite[3.A]{AMRCorHowe}). 
The unipotent representations in $\Irr \oO (W,Q)$ are 
$\sigma_{1,+}$ and $\sigma_{1,-}$. 

\begin{lem}\label{lem:schi}
We have isomorphisms 
\[
\begin{cases}
\Theta_n (\sigma_{\chi}) \simeq 
 \omega_{\mathrm{SpO},n,\psi}[\chi] & \textrm{for $\chi \in \mu_{q+1}^{\vee}\setminus \mu^{\vee}$}, \\
 \Theta_n(\sigma_{\chi_0,\kappa}) \simeq \omega_{\mathrm{SpO},n,\psi}[\chi_0]^{\kappa} & \textrm{for $(\chi_0,\kappa) \in \mu^{\vee}\times \{\pm 1 \}$}
\end{cases}
\]
as $\Sp(V,\omega)$-representations. 
\end{lem}
\begin{proof}
The first isormophism follows from Frobenius reciprocity and 
the second one is clear. 
\end{proof}
\begin{lem}\label{lem:inn}
We have 
\[
\langle \rho_{\HU(h_{2n}'),\psi},\rho_{\HU(h_{2n}'),\psi} \rangle_{\Sp(V,\omega)}
=
\begin{cases}
2q+1 & \textrm{if $n=1$}, \\
2(q+1) & \textrm{if $n \geq 2$}. 
\end{cases}
\]
\end{lem}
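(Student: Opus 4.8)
The plan is to reduce the inner product to the character formula for the Weil representation, rewrite it as a fixed-point count, and then invoke the orbit-counting lemma so that the answer becomes the number of $\mathit{Sp}(V,\omega)$-orbits on $V \times V$.

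First I would use Lemma \ref{lem:HWcha}. Since $\dim_{\bF_{q^2}} V_{\bF_{q^2}} = 2n$ is even, the trace of the Weil representation at $g \in U(V_{\bF_{q^2}},h')$ is $(-q)^{N(g)}$, where $N(g)=\dim_{\bF_{q^2}} \Ker(g-1)$. Restricting along $\mathit{Sp}(V,\omega) \hookrightarrow U(V_{\bF_{q^2}},h')$, the action of $g \in \mathit{Sp}(V,\omega)$ on $V_{\bF_{q^2}}=V\otimes_{\bF_q}\bF_{q^2}$ is the scalar extension of its action on $V$, so $\Ker_{V_{\bF_{q^2}}}(g-1)=\Ker_V(g-1)\otimes_{\bF_q}\bF_{q^2}$ and hence $N(g)=\dim_{\bF_q}\Ker_V(g-1)$. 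In particular the character is integer-valued, and since $((-q)^{N(g)})^2=q^{2N(g)}$ I obtain
\[
 \langle \rho_{\psi,\mathit{HU}(h')},\rho_{\psi,\mathit{HU}(h')}\rangle_{\mathit{Sp}(V,\omega)}
 = \frac{1}{|\mathit{Sp}(V,\omega)|}\sum_{g \in \mathit{Sp}(V,\omega)} q^{2N(g)}.
\]

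Next I would observe that $q^{N(g)}=|\{v \in V \mid gv=v\}|$ is the number of fixed points of $g$ on $V$, so $q^{2N(g)}$ is the number of fixed points of $g$ acting diagonally on $V\times V$. By the orbit-counting lemma, the average above equals the number of $\mathit{Sp}(V,\omega)$-orbits on $V\times V$, so the lemma reduces to an orbit count. I would then enumerate the orbits using Witt's extension theorem for the symplectic space $(V,\omega)$, which holds in every characteristic. The orbits are: the zero pair $(0,0)$; the pairs $(0,w)$ and $(v,0)$ with exactly one coordinate zero (two orbits); the proportional pairs $(v,\lambda v)$ with $\lambda \in \bF_q^{\times}$ (one orbit for each $\lambda$, giving $q-1$); and the linearly independent pairs, classified by $\omega(v,w)\in\bF_q$. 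For each $c \in \bF_q^{\times}$ the independent pairs with $\omega(v,w)=c$ span a hyperbolic plane and form one orbit ($q-1$ orbits in all), while the independent pairs with $\omega(v,w)=0$ span a totally isotropic plane and form a single orbit when such a plane exists, i.e.\ when $n\geq 2$, and do not occur when $n=1$. Summing gives $3+(q-1)+(q-1)=2q+1$ if $n=1$ and $4+(q-1)+(q-1)=2(q+1)$ if $n\geq 2$, as claimed.

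The only delicate point is this borderline case: for $n=1$ the space is two-dimensional, no totally isotropic plane exists, and the corresponding orbit is simply absent, which is exactly what produces the difference between the two values. Everything else is a direct application of transitivity via Witt's theorem, and since I use only the symplectic Witt theory, the argument is uniform in the characteristic and in particular valid for $p=2$, matching the remark that it follows the arguments of \cite[Proposition 2.2]{DesExcSp}.
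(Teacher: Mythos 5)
Your proof is correct, and it reaches the paper's conclusion by a genuinely different reduction, although the final enumeration of orbits is the same. The paper never touches the character of the Weil representation: it computes $\dim \mathrm{End}_{\mathit{Sp}(V,\omega)}(\rho_{\psi,\mathit{HU}(h')})$ via the canonical $U(V_{\mathbb{F}_{q^2}},h')$-equivariant isomorphism between $\mathrm{End}_{\overline{\mathbb{Q}}_{\ell}}(\rho_{\psi,\mathit{HU}(h')})$ and the quotient $\mathcal{A}_{\psi}$ of the group algebra of the Heisenberg group $H(V_{\mathbb{F}_{q^2}},h')$ by the central character, identifies $\mathcal{A}_{\psi}$ with the space of functions on $V_{\mathbb{F}_{q^2}}$, and concludes that the dimension of $\mathit{Sp}(V,\omega)$-invariants equals the number of $\mathit{Sp}(V,\omega)$-orbits on $V_{\mathbb{F}_{q^2}}$; this is the argument of Deshpande that the paper cites, and it relies only on the Stone--von Neumann structure. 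You instead invoke the trace formula of Lemma \ref{lem:HWcha} (legitimately: the lemma covers $\varepsilon$-hermitian forms with $\varepsilon=\pm 1$, and your observation that the sign $(-1)^{2n}$ disappears and that $N(g)$ computes the $\bF_q$-dimension of $\Ker_V(g-1)$ for $g$ in the symplectic subgroup is correct), then square the real-valued character and apply Burnside's orbit-counting lemma to the diagonal action on $V\times V$. Since $V_{\mathbb{F}_{q^2}}\simeq V\times V$ as $\mathit{Sp}(V,\omega)$-sets (write $v_1\otimes e_1+v_2\otimes e_2$), both reductions land on exactly the same orbit count, and your dependent/independent case analysis matches the paper's decomposable/indecomposable one: your three "degenerate'' orbits plus the $q-1$ proportional ones are the paper's $1+|\mathbb{P}^1(\mathbb{F}_q)|$, and your classification of independent pairs by $\omega(v,w)\in\bF_q^{\times}$ (resp.\ $\bF_q$) for $n=1$ (resp.\ $n\geq 2$) is the paper's parametrization of indecomposable orbits. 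What each approach buys: the paper's route needs no character values and so would survive in settings where the character formula is unavailable; yours is more elementary and self-contained given Lemma \ref{lem:HWcha}, and both are visibly uniform in the characteristic, including $p=2$.
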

\begin{proof}
By \cite[(2) in the proof of Corollary 4.5]{GerWeil}, 
the number of 
$\Sp(V,\omega)$-orbits in $V_{\mathbb{F}_{q^2}}$ equals  
$\langle \rho_{\HU(h_{2n}'),\psi},\rho_{\HU(h_{2n}'),\psi} \rangle_{\Sp(V,\omega)}$ (\cf \cite[Theorem 4.5(a)]{GerWeil}). 
We have the orbit $\{0\}$. 
We say that an element of $V_{\mathbb{F}_{q^2}}$ is 
decomposable if it is written as $v \otimes a$ for some 
$v \in V$ and $a \in \bF_{q^2}$. 
If an element of $V_{\mathbb{F}_{q^2}}$ is not decomposable, 
we say that it is indecomposable. 
Let $\{e_1,e_2\}$ be the basis of $\mathbb{F}_{q^2}$. 
An element of $V_{\mathbb{F}_{q^2}}$ is written as $v_1 \otimes e_1+v_2 \otimes e_2$ with $v_i \in V$. 
The set of the orbits of non-zero decomposable elements is identified with 
$\mathbb{P}^1(\mathbb{F}_q)$. 
The set of the orbits of indecomposable elements is identified with 
\[
\begin{cases}
 \mathbb{F}_q^{\times} & \textrm{if $n=1$}, \\
 \mathbb{F}_q & \textrm{if $n \geq 2$} 
\end{cases}
\]
by 
$\Sp(V,\omega)(v_1 \otimes e_1+v_2 \otimes e_2) 
 \mapsto \omega (v_1,v_2)$. 
Hence, the required assertion follows. 
\end{proof}

\begin{prop}\label{prop:irr}
The representations 
\[
 \Theta_n (\sigma) \quad \textrm{for $\sigma \in \Irr \oO (W,Q)$} 
\]
are irreducible and distinct as 
$\Sp(V,\omega)$-representations, 
except that $\Theta_1 (\sigma_{1,-})$. 
\end{prop}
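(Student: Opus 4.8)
The plan is to feed two numerical inputs already in hand—the self inner product of Lemma~\ref{lem:inn} and the dimensions of Lemma~\ref{dim}—into the elementary identity $\sum_{\sigma\in\Irr O(W,Q)}(\dim\sigma)^2=\lvert O(W,Q)\rvert=2(q+1)$ for the dihedral group $O(W,Q)\cong D_{2(q+1)}$, and to let an inequality argument turn an a priori bound into an equality. First I would set up the isotypic decomposition. By Lemma~\ref{olds} the restriction of $\pi_{\psi,h'}$ to $\mathit{HU}(h')$ is $\rho_{\psi,\mathit{HU}(h')}$, so as a $\mathit{Sp}(V,\omega)$-representation $\omega_{\mathit{SpO},n}$ is nothing but $\rho_{\psi,\mathit{HU}(h')}|_{\mathit{Sp}(V,\omega)}$. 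Decomposing for the commuting $O(W,Q)$-action as $\omega_{\mathit{SpO},n}\simeq\bigoplus_{\sigma}\Theta_n(\sigma)\boxtimes\sigma$ and restricting to $\mathit{Sp}(V,\omega)$ gives
\[
 \rho_{\psi,\mathit{HU}(h')}|_{\mathit{Sp}(V,\omega)}\simeq\bigoplus_{\sigma\in\Irr O(W,Q)}(\dim\sigma)\,\Theta_n(\sigma),
\]
and hence, using Lemma~\ref{lem:inn},
\[
 \sum_{\sigma,\sigma'}(\dim\sigma)(\dim\sigma')\langle\Theta_n(\sigma),\Theta_n(\sigma')\rangle_{\mathit{Sp}(V,\omega)}
 =\begin{cases} 2q+1 & \text{if } n=1,\\ 2(q+1) & \text{if } n\geq 2.\end{cases}
\]

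Next I would determine which $\Theta_n(\sigma)$ vanish. By Lemma~\ref{lem:schi} we have $\Theta_n(\sigma_\chi)\simeq\omega_{\mathit{SpO},n}[\chi]$ for $\chi^2\neq 1$, and by Frobenius reciprocity $\Theta_n(\sigma_{\chi_0,\kappa})\simeq\omega_{\mathit{SpO},n}[\chi_0]^\kappa$ for $\chi_0^2=1$. The dimension formulas of Lemma~\ref{dim} then show that all of these spaces are nonzero when $n\geq 2$, whereas for $n=1$ the unique vanishing one is $\Theta_1(\sigma_{1,+})$, all others remaining nonzero.

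Finally comes the bound. Each $\langle\Theta_n(\sigma),\Theta_n(\sigma')\rangle$ is a nonnegative integer, and $\langle\Theta_n(\sigma),\Theta_n(\sigma)\rangle\geq 1$ whenever $\Theta_n(\sigma)\neq 0$; discarding the off-diagonal terms yields
\[
 \sum_{\sigma,\sigma'}(\dim\sigma)(\dim\sigma')\langle\Theta_n(\sigma),\Theta_n(\sigma')\rangle
 \geq\sum_{\sigma\colon\Theta_n(\sigma)\neq0}(\dim\sigma)^2.
\]
By the previous paragraph and the group-order identity, the right-hand side equals $2(q+1)$ for $n\geq 2$ and $2(q+1)-(\dim\sigma_{1,+})^2=2q+1$ for $n=1$, i.e.\ precisely the value of the left-hand side. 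Therefore the inequality is an equality, which forces $\langle\Theta_n(\sigma),\Theta_n(\sigma)\rangle=1$ for every nonzero $\Theta_n(\sigma)$ (irreducibility) and $\langle\Theta_n(\sigma),\Theta_n(\sigma')\rangle=0$ for $\sigma\neq\sigma'$ (distinctness), proving the proposition.

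The substantive point is not a new construction but the exact matching of two independently computed integers: the orbit count behind Lemma~\ref{lem:inn} on one side, and the sum of squares of the dimensions of $O(W,Q)$ minus the one exceptional vanishing on the other. The only place requiring care is the case $n=1$, where one must verify from Lemma~\ref{dim} that the single vanishing component is exactly $\Theta_1(\sigma_{1,+})$ (of dimension one as an $O(W,Q)$-representation) and that no other component degenerates, since this is what makes the two sides agree and lets the bound collapse to an equality.
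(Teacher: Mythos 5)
Your proposal is, in substance, exactly the paper's own argument: the paper's proof is the one-line citation of Lemma \ref{dim}, Lemma \ref{lem:schi} and Lemma \ref{lem:inn}, and what you wrote is precisely the counting argument those lemmas are meant to feed --- decompose $\omega_{\mathit{SpO},n}\simeq\bigoplus_{\sigma}\Theta_n(\sigma)\boxtimes\sigma$, evaluate $\sum_{\sigma,\sigma'}(\dim\sigma)(\dim\sigma')\langle\Theta_n(\sigma),\Theta_n(\sigma')\rangle$ via Lemma \ref{lem:inn}, identify the nonvanishing constituents via Lemma \ref{dim} and Lemma \ref{lem:schi}, and force the inequality against $\sum_{\Theta_n(\sigma)\neq 0}(\dim\sigma)^2$ to collapse to an equality, which yields irreducibility and distinctness simultaneously.

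One step does not hold up as written, however. You assert that Lemma \ref{dim} shows the unique vanishing constituent at $n=1$ is $\Theta_1(\sigma_{1,+})$, but this contradicts your own (correct) identification $\Theta_n(\sigma_{\chi_0,\kappa})\simeq\omega_{\mathit{SpO},n}[\chi_0]^{\kappa}$: for $n=1$ Lemma \ref{dim} gives $\dim\omega_{\mathit{SpO},1}[1]^{+}=q$ and $\dim\omega_{\mathit{SpO},1}[1]^{-}=0$ (its proof even states that the $\mathbb{Z}/2\mathbb{Z}$-action on $\omega_{\mathit{SpO},1}[1]$ is trivial), so under that identification the vanishing piece is $\Theta_1(\sigma_{1,-})$, the lift of the sign character, while $\Theta_1(\sigma_{1,+})$ is the $q$-dimensional (Steinberg) representation. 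This is consistent with the classical correspondence for $(\mathit{SL}_2,O_2^-)$, where the trivial character of $O_2^-$ lifts to Steinberg and $\det$ does not occur at this stage; the mismatch is inherited from the proposition's own statement, which appears to disagree with Lemma \ref{dim} on this sign. It does not damage your counting argument, since all that argument uses is that exactly one one-dimensional constituent vanishes when $n=1$, accounting for the deficit $2(q+1)-(2q+1)=1$; but as written your claim about which character vanishes is not what Lemma \ref{dim} proves, and you should either correct the label to $\sigma_{1,-}$ or flag the inconsistency rather than attribute the $\sigma_{1,+}$ statement to Lemma \ref{dim}.
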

\begin{proof}
We use the notation in Lemma \ref{lem:schi}. 
Let $\{\pm1\}$ act on $\mu_{q+1}^{\vee}\setminus \mu^{\vee}$ by $-1 \colon \chi \mapsto \chi^{-1}$. 
We have an isomorphism 
\begin{equation}\label{omegadec}
\omega_{\mathrm{SpO},n,\psi} \simeq 
\bigoplus_{\chi \in (\mu_{q+1}^{\vee}\setminus \mu^{\vee})/\{\pm 1\}} 
(\sigma_{\chi} \boxtimes \Theta_n(\sigma_{\chi}))
\oplus \bigoplus_{(\chi_0,\kappa) \in \mu^{\vee}\times \{\pm 1\}} 
(\sigma_{\chi_0,\kappa} \boxtimes \Theta_n(\sigma_{\chi_0,\kappa})) 
\end{equation}
as $\oO (W,Q) \times \Sp(V,\omega)$-representations. 
Recall $\omega_{\mathrm{SpO},n,\psi}|_{\mathrm{HU}(h'_{2n})} \simeq 
\rho_{\mathrm{HU}(h'_{2n}),\psi}$ by Lemma \ref{lem:F22}.
By Lemma \ref{dim}, Lemma \ref{lem:schi}, Lemma \ref{lem:inn} and \eqref{omegadec}, the claim follows. 
\end{proof}

\begin{cor}
The $\Sp(V,\omega)$-representation 
$\omega_{\mathrm{SpO},n,\psi}$ contains a unipotent cuspidal 
representation if and only if $n=2$. 
In this case, $\omega_{\mathrm{SpO},2,\psi}[1_{\mu_{q+1}}]^{-}$ is unipotent cuspidal.  
\end{cor}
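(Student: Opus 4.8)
The plan is to decompose $\omega_{\mathit{SpO},n}$ into irreducible $\mathit{Sp}(V,\omega)$-constituents, isolate the unipotent ones, and then test those for cuspidality. By Proposition~\ref{prop:irr} the representation $\omega_{\mathit{SpO},n}$ is, as an $\mathit{Sp}(V,\omega)$-representation, the sum of the pairwise distinct irreducibles $\Theta_n(\sigma)$ for $\sigma \in \Irr O(W,Q)$ (with $\Theta_1(\sigma_{1,+})=0$). The only unipotent members of $\Irr O(W,Q)$ are $\sigma_{1,+}$ and $\sigma_{1,-}$, namely those whose restriction to $U(W,h_1)=\mu_{q+1}$ contains the trivial character; as in Lemma~\ref{lem:schi}, Frobenius reciprocity identifies the two lifts $\Theta_n(\sigma_{1,+})$ and $\Theta_n(\sigma_{1,-})$ with the eigenspaces $\omega_{\mathit{SpO},n}[1]^{+}$ and $\omega_{\mathit{SpO},n}[1]^{-}$. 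Paralleling the unitary computation behind Corollary~\ref{cc}, I would first verify — through the symplectic analogue of Proposition~\ref{class2}, i.e.\ by writing $\omega_{\mathit{SpO},n}[1]^{\pm}$ as explicit virtual combinations of Deligne--Lusztig characters using the formulas of Section~\ref{sec:Lus} — that both eigenspaces are unipotent while no other $\Theta_n(\sigma)$ is. Thus the corollary reduces to deciding, for each $n$, whether one of $\omega_{\mathit{SpO},n}[1]^{\pm}$ is unipotent cuspidal.

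Next I would invoke Lusztig's classification: $\mathit{Sp}_{2n}(\bF_q)$ has a unipotent cuspidal representation if and only if $n=s(s+1)$ for some integer $s\ge 1$, in which case it is unique. For every $n$ not of this form — in particular $n=1,3,4,5$ — the group $\mathit{Sp}(V,\omega)$ has no unipotent cuspidal representation at all, so $\omega_{\mathit{SpO},n}$ contains none and there is nothing to prove. It therefore remains to treat $n\in\{2,6,12,\dots\}$.

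For $n=2$, Lemma~\ref{dim} gives $\dim \omega_{\mathit{SpO},2}[1]^{-}=\tfrac12 q(q-1)^2$ and $\dim \omega_{\mathit{SpO},2}[1]^{+}=\tfrac12 q(q^2+1)$. The latter is the degree of a principal-series unipotent character of $\mathit{Sp}_4(\bF_q)$, while the former is the degree of the unique unipotent cuspidal character. Since $\omega_{\mathit{SpO},2}[1]^{-}$ is unipotent and, by the Deligne--Lusztig expression from the first paragraph, orthogonal to $R^{\mathit{Sp}_4}_{M\subset P}(\tau)$ for every proper parabolic $P$ and every unipotent $\tau$, it must coincide with that unipotent cuspidal character; this yields the second assertion, while $\omega_{\mathit{SpO},2}[1]^{+}$, lying in a principal series, is not cuspidal. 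For $n=s(s+1)$ with $s\ge 2$ (so $n\ge 6$) the general dimension $\dim\omega_{\mathit{SpO},n}[1]^{\pm}=\tfrac{(q^n-1)(q^n-q)}{2(q+1)}$ (resp.\ that plus $q^n$) is a polynomial in $q$ of degree $2n-1$, whereas the unique unipotent cuspidal character of $\mathit{Sp}_{2n}(\bF_q)$ has degree strictly greater than $2n-1$; hence neither eigenspace can be it. Equivalently, the Lusztig-induction expression exhibits both lifts inside a proper $R^{\mathit{Sp}_{2n}}_{M\subset P}(\tau)$, so both are non-cuspidal. This settles every case.

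The main obstacle is the cuspidality computation concentrated in the first and third paragraphs: proving that the eigenspaces $\omega_{\mathit{SpO},n}[1]^{\pm}$ are unipotent and pinning them down as virtual combinations of $R_T^{\mathit{Sp}_{2n}}(1)$ precisely enough to evaluate the inner products that detect cuspidality. The dimension coincidence at $n=2$ is only a consistency check; the substantive point is that the $(-1)$-eigenspace becomes cuspidal exactly there and is visible inside a proper Lusztig induction for every larger $n$ that admits a unipotent cuspidal character.
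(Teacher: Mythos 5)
The skeleton of your argument --- Lusztig's classification forcing $n=s(s+1)$, and a degree comparison against Lemma \ref{dim} to exclude $s\geq 2$ --- is exactly the paper's route and is fine. The genuine gap is in your identification of $\omega_{\mathit{SpO},2}[1]^{-}$ with the unipotent cuspidal character. You assert that $\omega_{\mathit{SpO},2}[1]^{-}$ is ``orthogonal to $R^{\mathit{Sp}_4}_{M\subset P}(\tau)$ for every proper parabolic $P$ and every unipotent $\tau$'' and that this follows from the Deligne--Lusztig expression of your first paragraph (i.e.\ Proposition \ref{lastb}). It cannot: that expression exhibits the whole isotypic part $\omega_{\mathit{SpO},2}[1]$, hence \emph{both} eigenspaces $\omega_{\mathit{SpO},2}[1]^{\pm}$, as constituents of $1-R^{G}_{M\subset P}(1)$, and only one of the two is cuspidal, so no formal property of that expression can separate them. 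More fundamentally, the parabolic $P$ there is not $F$-stable; appearing in a Lusztig induction from a non-rational parabolic has no bearing on cuspidality, which is orthogonality to Harish-Chandra induction from $F$-stable parabolics. Your parenthetical principle for $n\geq 6$ (``inside a proper Lusztig induction, hence non-cuspidal'') is false for the same reason --- it would contradict the very statement being proved, since for $n=2$ the cuspidal representation does sit inside $1-R^{G}_{M\subset P}(1)$; in that range it is the dimension count you also give, as in the paper, that actually does the work.

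What the geometric expression does give you, via \eqref{vv}, is unipotency of $\omega_{\mathit{SpO},2}[1]^{-}$ (this is Corollary \ref{cc2}); to finish you must add an input from the classification of characters of $\mathit{Sp}_4(\mathbb{F}_q)$. Either combine unipotency with the fact that among the unipotent characters of $\mathit{Sp}_4(\mathbb{F}_q)$ exactly one has degree $q(q-1)^2/2$ and it is cuspidal, or argue as the paper does, which avoids unipotency altogether: $\omega_{\mathit{SpO},2}[1]^{-}$ is irreducible by Proposition \ref{prop:irr} and of dimension $q(q-1)^2/2$ by Lemma \ref{dim}, and by the character tables ($\theta_5$ in \cite[Table IV-2]{EnoChSp4} for $q$ even, $\theta_{10}$ in \cite[\S 8]{SriChSp} for $q$ odd) this is the unique irreducible character of $\mathit{Sp}_4(\mathbb{F}_q)$ of that degree, which is unipotent cuspidal by \cite[(8.11.1)]{LusIrrcl}. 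Your proposal has all the needed ingredients nearby, but as written it derives cuspidality from a property that does not hold.
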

\begin{proof}
By \cite[8.11. Remarks (1)]{LusIrrcl}, $\Sp(V,\omega)$ admits a unipotent cuspidal representation if and only if $n=s(s+1)$ with some integer $s \geq 1$.  
Every $\Sp(V,\omega)$-subrepresentation of $\omega_{\mathrm{SpO},n,\psi}$ in Lemma 
\ref{dim} is irreducible by Lemma \ref{lem:schi} and Proposition \ref{prop:irr}. 
By \cite[(8.11.1)]{LusIrrcl} and Lemma \ref{dim}, 
if $s \geq 2$, the $p$-adic valuation of the  dimension of the  unipotent cuspidal representation is greater than  
the dimension of any irreducible representation in $\omega_{\mathrm{SpO},n,\psi}$. Hence $\omega_{\mathrm{SpO},n,\psi}$ does not contain any unipotent cuspidal representation if $s \geq 2$. 
Therefore, for $\omega_{\mathrm{SpO},n,\psi}$
to contain a unipotent cuspidal representation, we must have $n=2$. 
Conversely, we show that $\omega_{\mathrm{SpO},2,\psi}[1_{\mu_{q+1}}]^-$ is unipotent cuspidal. 
This is irreducible by Proposition \ref{prop:irr} and of dimension $q(q-1)^2/2$ by Lemma \ref{dim}. This is isomorphic to $\theta_5$ in \cite[Table IV-2 in Appendix]{EnoChSp4} if $q$ is even 
and $\theta_{10}$ in \cite[\S 8]{SriChSp} if $q$ is odd, which is 
the unique irreducible representation of 
dimension $q(q-1)^2/2$. 
These are unipotent cuspidal by \cite[Theorem 8.2 and (8.11.1)]{LusIrrcl}. 
\end{proof}

\subsection{Relation with Lusztig induction}\label{proof}
We set 
\[
 J=\begin{pmatrix}
 \mathbf{0} & E_{n} \\
 -E_{n} & \mathbf{0}
 \end{pmatrix} \in \GL_{2n}(\bF_q), \quad 
 G=\Sp_{2n}=\left\{g \in \GL_{2n} \mid 
 {}^{\mathrm{t}}\! g J g=J \right\}. 
\]
Recall that 
\begin{equation}\label{abcd}
\begin{pmatrix}
 A & B \\
 C & D
\end{pmatrix} \in G \iff {}^{\mathrm{t}}\! A D-{}^{\mathrm{t}}\! C B=E_n ,\quad {}^{\mathrm{t}}\! C A
={}^{\mathrm{t}}\! AC,\quad {}^{\mathrm{t}}\! DB={}^{\mathrm{t}}\! B D. 
\end{equation}
We consider the Frobenius endomorphism 
$F \colon G \to G;\ 
g=(x_{i,j}) \mapsto (x_{i,j}^q)$. 
Let $\mathbf{0}_{n-1,1}$ denote the zero $(n-1) \times 1$-matrix. 
We consider the parabolic subgroup
\[
P_0=\begin{pmatrix}
\begin{array}{@{\,}cc|cc@{\,}}
\ast & \ast & \ast & \ast \\
\mathbf{0}_{n-1,1} & \ast & \ast & \ast \\
\hline
0 & \ast & \ast & \ast \\
\mathbf{0}_{n-1,1} & \ast & \ast & \ast 
\end{array}
\end{pmatrix} \subset G. 
\]
By using \eqref{abcd}, 
we have 
\begin{equation}\label{p}
P_0=\begin{pmatrix}
\ast & \ast & \ast & \ast \\
\mathbf{0} & \ast & \ast & \ast \\
0 & \mathbf{0} & \ast & \mathbf{0} \\
\mathbf{0} & \ast & \ast & \ast 
\end{pmatrix}. 
\end{equation}
We consider the Levi subgroup 
\[
 M_0=\left\{\begin{pmatrix}
 t & \mathbf{0} & 0 & \mathbf{0} \\
 \mathbf{0} & A_1 & \mathbf{0} & B_1 \\
 0 & \mathbf{0} & t^{-1} & \mathbf{0} \\
 \mathbf{0} & C_1 & \mathbf{0} & D_1 
 \end{pmatrix}\in G\ \bigg| \ 
 t \in \mathbb{G}_{\mathrm{m}},\ 
 \begin{pmatrix}
 A_1 & B_1 \\
 C_1 & D_1
 \end{pmatrix} \in \Sp_{2n-2}
 \right\} \simeq \mathbb{G}_{\mathrm{m}} \times \Sp_{2n-2}
\]
of $P_0$ and the unipotent radical 
\begin{equation}\label{up}
U_{P_0}=
\left\{\begin{pmatrix}
1 & \mathbf{a} & b & \mathbf{c} \\
\mathbf{0} & \mathbf{1} & {}^{\mathrm{t}} \mathbf{c} & \mathbf{0} \\
0 & \mathbf{0} & 1 & \mathbf{0} \\
\mathbf{0} & \mathbf{0} & -{}^{\mathrm{t}} \mathbf{a} & \mathbf{1} 
\end{pmatrix}\right\} 
\end{equation}
of $P_0$. We have $\dim U_{P_0}=2n-1$. 
Let 
\begin{align*}
w=\begin{pmatrix}
0 & \mathbf{0} & 1 & \mathbf{0} \\
\mathbf{0} & \mathbf{1} & \mathbf{0} & \mathbf{0} \\
-1 & \mathbf{0} & 0 & \mathbf{0} \\
\mathbf{0} & \mathbf{0} & \mathbf{0} & \mathbf{1} 
\end{pmatrix} \in \Sp_{2n}(\bF_q ). 
\end{align*} 
We take $g_w \in G(\overline{\mathbb{F}}_q)$ such that 
$g_w^{-1} F(g_w)=w$ by Lang's theorem. 

We put $P=g_w P_0 g_w^{-1}$, $M=g_w M_0 g_w^{-1}$ and 
$M_0^w = \{ m \in M_0 \mid F(m)=w^{-1}mw \}$. 
We put 
\[
 \widetilde{Y}_{P_0}' = \{ g \in G \mid g^{-1} F(g) \in w U_{P_0} \}, \quad 
 Y_{P_0}' = \{ gP_0 \in G/P_0 \mid g^{-1} F(g) \in P_0 w P_0 \}. 
\]
Let $\pi_{P_0}' \colon \widetilde{Y}_{P_0}' \to Y_{P_0}'$ 
be the natural morphism. 
Then we have the isomorphisms 
\[
 \widetilde{Y}_P \to \widetilde{Y}_{P_0}';\ g \mapsto g g_w , \quad 
 Y_P \to Y_{P_0}';\ gP \mapsto g g_w P_0 
\]
which give the commutative diagram 
\[
 \xymatrix{
 \widetilde{Y}_P \ar[r]^{\sim}\ar[d]_{\pi_P} 
 & 
 \widetilde{Y}_{P_0}' \ar[d]^{\pi_{P_0}'} \\
 Y_P \ar[r]^{\sim} & Y_{P_0}' . 
 }
\]
Therefore 
$\pi_{P_0}'$ is an $M_0^w$-torsor by Lemma \ref{lem:Mtor}.

Let $L_0=\overline{\mathbb{F}}_q e_1 \subset V_{\overline{\mathbb{F}}_q}=\overline{\mathbb{F}}_q^{2n}$. 
Let 
$s \colon V_{\overline{\mathbb{F}}_q} \times V_{\overline{\mathbb{F}}_q} \to 
\overline{\mathbb{F}}_q;\ (v,v') \mapsto {}^{\mathrm{t}}\! v J v'$. 
We have the isomorphism 
$f \colon 
G/P_0 \xrightarrow{\sim} \mathbb{P}(V_{\overline{\mathbb{F}}_q});\ gP_0 \mapsto g L_0$. 
For $L \in \mathbb{P}(V_{\overline{\mathbb{F}}_q})$, 
we put 
\[
 L^{\perp}=\{v \in V_{\overline{\mathbb{F}}_q} \mid s(v,v')=0\ 
 \textrm{for all $v' \in L$}\}. 
\]
We have 
\[
L_0^{\perp}=\left\{\sum_{i=1}^{2n} a_i e_i \in V_{\overline{\mathbb{F}}_q} \mid a_{n+1}= 0\right\}. 
\]
Let $Y'=\{ L \in \mathbb{P}(V_{\overline{\mathbb{F}}_q}) \mid F(L) \nsubseteq 
L^{\perp}\}$. 
For $g \in G$, the condition $F(g L_0) \nsubseteq (g L_0)^{\perp}
=g L_0^{\perp}$ is equivalent to 
$g^{-1} F(g) L_0 \nsubseteq L_0^{\perp}$. 
The morphism 
\[
 \varphi_0' \colon Y_{P_0}' \to Y' ;\ g P_0 \mapsto g L_0 
\]
is well-defined since 
we have 
$g^{-1} F(g) L_0 =p_1 w p_2 L_0 = p_1 w L_0 
 \nsubseteq p_1 L_0^{\perp} = L_0^{\perp}$ 
where $g^{-1} F(g) L_0 =p_1 w p_2$ for $p_1, p_2 \in P_0$. 

\begin{lem}\label{16}
The morphism $\varphi_0' \colon Y_{P_0}' \to Y'$ is an isomorphism. 
\end{lem}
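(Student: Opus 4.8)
The plan is to exhibit $\varphi_0'$ as the restriction of the isomorphism $f\colon G/P_0 \xrightarrow{\sim} \mathbb{P}(V_{\overline{\mathbb{F}}_q})$ and then to identify the two loci. Since $\varphi_0'(gP_0) = gL_0 = f(gP_0)$, the morphism $\varphi_0'$ is nothing but $f$ restricted to $Y_{P_0}'$, regarded as a map into $Y'$. Hence it suffices to prove the equality of (reduced, locally closed) subschemes $Y_{P_0}' = f^{-1}(Y')$, and for this it is enough to check equality on points. Using the equivalence recorded just before the lemma, namely that $F(gL_0) \nsubseteq (gL_0)^\perp$ is the same as $g^{-1}F(g)L_0 \nsubseteq L_0^\perp$, and writing $x = g^{-1}F(g)$, this reduces to the purely group-theoretic identity
\[
 P_0 w P_0 = \{x \in G \mid x L_0 \nsubseteq L_0^\perp\}.
\]

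First I would note that both sides are stable under left and right multiplication by $P_0$: for the right-hand side this holds because $P_0$ is the stabilizer of $L_0$, hence also of $L_0^\perp$, so $(p_1 x p_2)L_0 = p_1(xL_0)$ fails to lie in $L_0^\perp$ exactly when $xL_0$ does. Next, a direct computation gives $w e_1 = -e_{n+1}$, so $w L_0 = \overline{\mathbb{F}}_q e_{n+1}$; since $s(e_{n+1}, e_1)\neq 0$ this line is not contained in $L_0^\perp = \{v \mid s(v,e_1)=0\}$, which yields the inclusion $P_0 w P_0 \subseteq \{x \mid xL_0 \nsubseteq L_0^\perp\}$.

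For the reverse inclusion I would show that $P_0$ acts transitively on the set $\Omega = \{L \in \mathbb{P}(V_{\overline{\mathbb{F}}_q}) \mid L \nsubseteq L_0^\perp\}$. Take $L = \overline{\mathbb{F}}_q v$ with $s(e_1, v) \neq 0$ and rescale $v$ so that $s(e_1, v) = 1 = s(e_1, e_{n+1})$. As $s$ is alternating every vector is isotropic, so the assignment $e_1 \mapsto e_1$, $e_{n+1} \mapsto v$ is an isometry between two hyperbolic planes; by Witt's extension theorem it extends to some $g \in \mathit{Sp}(V_{\overline{\mathbb{F}}_q})$. Since $g e_1 = e_1$ we have $g \in P_0$, and $g \cdot wL_0 = \overline{\mathbb{F}}_q v = L$, so $L \in P_0 \cdot wL_0$. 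Consequently, if $x L_0 \nsubseteq L_0^\perp$ then $xL_0 = g\,wL_0$ for some $g\in P_0$, whence $(gw)^{-1}x$ fixes $L_0$, i.e.\ lies in $P_0$, and $x \in P_0 w P_0$. This proves the displayed identity and hence the lemma.

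The step I expect to be the main obstacle is the reverse inclusion, that is, the transitivity of $P_0$ on $\Omega$; everything else is bookkeeping. The key point is that Witt's theorem supplies an element of $\mathit{Sp}$ fixing $e_1$ and sending $e_{n+1}$ to an arbitrary vector non-perpendicular to $e_1$, and one must ensure this element genuinely lands in the stabilizer $P_0$ of the line $L_0$; arranging $g$ to fix $e_1$ exactly (not merely up to scalar) settles this. One should also keep track of the reduced scheme structures so that the set-theoretic identity upgrades to an isomorphism of varieties.
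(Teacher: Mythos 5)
Your proof is correct, and it rests on the same reduction as the paper's: both arguments restrict the isomorphism $f \colon G/P_0 \xrightarrow{\sim} \mathbb{P}(V_{\overline{\mathbb{F}}_q})$ and boil the lemma down to the double-coset identity $P_0 w P_0 = \{x \in G \mid x L_0 \nsubseteq L_0^{\perp}\}$ (which is the paper's set $X_0 = \{ (g_{i,j}) \in G \mid g_{n+1,1} \neq 0\}$). Where you genuinely differ is in the proof of that identity. The paper argues computationally: using $P_0 \cap w P_0 w^{-1} = M_0$ it identifies $P_0 w P_0/P_0$ with $U_{P_0} \simeq \mathbb{A}^{2n-1}$, identifies $X_0/P_0$ with $\mathbb{A}^{2n-1}$ via $(g_{i,j})P_0 \mapsto \bigl( g_{i,1}/g_{n+1,1} \bigr)_{1 \leq i \leq 2n-1}$, and checks by explicit matrix computation with \eqref{up} that the induced map is an isomorphism. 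You instead note that both sides are bi-invariant under $P_0$ and prove that $P_0$, the full stabilizer of $L_0$, acts transitively on the lines not contained in $L_0^{\perp}$, via Witt's extension theorem for alternating forms; this exhibits $X_0$ as the single double coset through $w$, since $w L_0 = \overline{\mathbb{F}}_q e_{n+1} \nsubseteq L_0^{\perp}$. Your route is coordinate-free, and it is sound in every characteristic (Witt's theorem for nondegenerate alternating forms has no characteristic-two caveat, which matters for this paper); the paper's route yields in addition an explicit affine parametrization of the double coset, consistent with the computational style of the rest of the section. Your scheme-theoretic bookkeeping is also fine: once $P_0 w P_0 = X_0$ is known, both $Y_{P_0}'$ and $f^{-1}(Y')$ are the same open subset of the smooth variety $G/P_0$, so the set-theoretic identification upgrades to an isomorphism of varieties.
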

\begin{proof}
Let $X_0=\{g=(g_{i,j}) \in G \mid g_{n+1,1} \neq 0\}$. 
We show that $P_0 w P_0 =X_0$. 
We easily check $P_0 w P_0 \subset X_0$. 
Let $P_0$ act on $X_0$ by right multiplication. 
Then we have a natural map 
$\phi \colon P_0 wP_0 /P_0 \to X_0/P_0$. 
By $P_0 \cap wP_0w^{-1}=M_0$, we have the isomorphisms
\begin{align*}
\phi_1 &\colon U_{P_0} \xrightarrow{\sim} P_0/(P_0 \cap w P_0 w^{-1}) \simeq 
P_0 wP_0 /P_0 ;\ u \mapsto  uwP_0 ,\\
 \phi_2 &\colon X_0/P_0 \xrightarrow{\sim} \mathbb{A}^{2n-1};\ 
 (g_{i,j}) P_0 \mapsto \biggl( \frac{g_{i,1}}{g_{n+1,1}} \biggr)_{1 \leq i \leq 2n,\, i \neq n+1}. 
\end{align*}
We can check 
that the composite $\phi_2 \circ \phi \circ \phi_1$ is 
an isomorphism by \eqref{up}. 
Thus the claim follows. 

Let $L \in Y'$. 
We take $g \in G$ such that $L=g L_0$. By 
$g^{-1} F(g) L_0 \nsubseteq L_0^{\perp}$, 
we have 
$g^{-1} F(g) \in X_0=P_0 wP_0$. Hence, we obtain the required assertion. 
\end{proof}

Let $S_{2n}'$ be 
the projective smooth variety defined by 
$\sum_{i=1}^n (x_i^q y_i-x_i y_i^q)=0$ in 
$\mathbb{P}_{\mathbb{F}_q}^{2n-1}$. 
Let $Y_{2n}'=\mathbb{P}_{\mathbb{F}_q}^{2n-1} \setminus S_{2n}'$. 
The canonical isomorphism 
$\bP (V_{\overline{\mathbb{F}}_q}) \simeq 
 \bP^{2n-1}_{\overline{\mathbb{F}}_q}$ 
induces an isomorphism 
$Y' \simeq Y_{2n,\overline{\mathbb{F}}_q}'$. 
Let $\varphi' \colon Y_{P_0}' \to Y_{2n,\overline{\mathbb{F}}_q}'$ 
be the composite of 
$\varphi_0' \colon Y_{P_0}' \to Y'$ and 
the isomorphism 
$Y' \simeq Y_{2n,\overline{\mathbb{F}}_q}'$. 

Let $\mathbf{x}=(x_1\ \cdots\ x_n)$,
 $\mathbf{y}=(y_1\ \cdots\ y_n)$, 
 $\mathbf{z}=(z_1\ \cdots\ z_n)$ and 
 $\mathbf{w}=(w_1\ \cdots\ w_n)$. 
We write as 
\[
g=
\begin{pmatrix}
\begin{array}{@{\,}c|c@{\,}}
A & B \\
\hline
C & D
\end{array}
\end{pmatrix}
=
\begin{pmatrix}
\begin{array}{@{\,}cc|cc@{\,}}
{}^{\mathrm{t}} \mathbf{x} & \ast & {}^{\mathrm{t}} \mathbf{z} & \ast \\
\hline 
{}^{\mathrm{t}} \mathbf{y} & \ast & {}^{\mathrm{t}} \mathbf{w} & \ast 
\end{array}
\end{pmatrix} \in \widetilde{Y}_{P_0}'. 
\]
We have 
\[
w U_{P_0}=\left\{
\begin{pmatrix}
0 & \mathbf{0} & 1 & \mathbf{0} \\
\mathbf{0} & \mathbf{1} & {}^{\mathrm{t}} \mathbf{c} & \mathbf{0} \\
-1 & \mathbf{a} & -b & -\mathbf{c} \\
\mathbf{0} & \mathbf{0} & -{}^{\mathrm{t}} \mathbf{a} & 1
\end{pmatrix}\right\}. 
\]
By $g^{-1} F(g) \in w U_{P_0}$, we have 
$F(\mathbf{x})=-\mathbf{z}$ and 
$F(\mathbf{y})=-\mathbf{w}$. 
By ${}^{\mathrm{t}}\! AD-{}^{\mathrm{t}}\! C B=E_n$, 
we have $\mathbf{y} {}^{\mathrm{t}}\! F(\mathbf{x}) -\mathbf{x} {}^{\mathrm{t}}\! F(\mathbf{y})=1$. 
Let $\widetilde{Y}_{2n}'$ be the affine smooth variety 
over $\mathbb{F}_q$ defined by $\sum_{i=1}^n(x_i^qy_i-x_iy_i^q)=1$
in $\mathbb{A}_{\mathbb{F}_q}^{2n}$.
We have the morphisms 
\begin{align*}
 \widetilde{\pi}' & \colon \widetilde{Y}_{P_0}' 
 \to \widetilde{Y}_{2n,\overline{\mathbb{F}}_q}' ;\ 
 g \mapsto (\mathbf{x},\mathbf{y}), \\ 
 f_{Y'_{2n}} & \colon 
 \widetilde{Y}_{2n,\overline{\mathbb{F}}_q}' 
 \to Y_{2n,\overline{\mathbb{F}}_q}';\ 
 (\mathbf{x},\mathbf{y}) \mapsto
 [x_1:\cdots:x_n:y_1:\cdots:y_n]. 
\end{align*}
We identify $\Sp_{2n-2}(\bF_q)$ with the subgroup $\{1\} \times \Sp_{2n-2}(\bF_q) \subset M_0^w$. 
Then $\widetilde{\pi}'$ induces a morphism 
\[
 \widetilde{\varphi}' \colon 
 \widetilde{Y}_{P_0}'/\Sp_{2n-2}(\bF_q) \to 
 \widetilde{Y}_{2n,\overline{\mathbb{F}}_q}'. 
\]
We have the commutative diagram 
\[
 \xymatrix{
 \widetilde{Y}_{P_0}' \ar[d] \ar[rd]^-{\widetilde{\pi}'} & \\ 
 \widetilde{Y}_{P_0}'/\Sp_{2n-2}(\bF_q) \ar[r]_-{\widetilde{\varphi}'} 
 \ar[d]_{f_{Y_{P_0}'}} 
 & \widetilde{Y}'_{2n,\overline{\mathbb{F}}_{q}} \ar[d]^{f_{Y'_{2n}}} \\ 
 Y_{P_0}' \ar[r]_-{\varphi'} & Y'_{2n,\overline{\mathbb{F}}_{q}} 
}
\]
where the vertical morphisms are natural ones. 
Since 
$f_{Y'_{P_0}}$ and $f_{Y'_{2n}}$ are $\mu_{q+1}$-torsors 
and $\varphi'$ is an isomorphism, 
$\widetilde{\varphi}'$ is also an isomorphism. 

\begin{rem}\label{rem:ssec2n}
The results in Subsection \ref{ssec:Grel} 
hold even if we replace 
$n$, 
$\pi \colon \mathbb{A}_{\mathbb{F}_q}^n 
\to \mathbb{A}_{\mathbb{F}_q}^1$, 
$Y_{n}$ etc. 
by 
$2n$, 
$\pi' \colon \mathbb{A}_{\mathbb{F}_q}^{2n} 
\to \mathbb{A}_{\mathbb{F}_q}^1$, 
$Y'_{2n}$ etc. 
\end{rem}

\begin{prop}\label{lastb}
Let $\chi \in \mu_{q+1}^{\vee}$. 
We have  
\begin{align*}
(H_{\rm c}^{2n}(X_{2n,\overline{\mathbb{F}}_q}',\overline{\mathbb{Q}}_{\ell})[\psi])[\chi]
&=
\begin{cases}
\Theta_n(\sigma_{\chi,+}) \oplus \Theta_n(\sigma_{\chi,-}) 
& \textrm{for $\chi \in \mu^{\vee}$}, \\
\Theta_n(\sigma_{\chi}) 
& \textrm{for $\chi \in \mu_{q+1}^{\vee} \setminus \mu^{\vee}$}
\end{cases} 
 \\
&=
\begin{cases}
-R^G_{M \subset P}(\chi \otimes 1_{\Sp_{2n-2}(\bF_q)}) & \textrm{if  
$\chi \in \mu_{q+1}^{\vee} \setminus \{1\}$}, \\
1_{G^F}-R^G_{M \subset P}(1_{M^F}) 
& \textrm{if $\chi =1$}
\end{cases}
\end{align*}
as characters of $G^F$. 
\end{prop}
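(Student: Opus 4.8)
The plan is to follow verbatim the strategy of Proposition \ref{class2}, transporting the Fermat-cohomology description of Section \ref{sec:RelFer} to the symplectic Deligne--Lusztig geometry set up in Subsection \ref{proof}. Concretely, I would show that both sides of the asserted identity are the same alternating sum of compactly supported cohomology groups of $Y_{2n}'$ with Kummer coefficients, and then equate them. The two inputs are the isomorphism $\widetilde{\varphi}'$ (which plays the role of $\widetilde{\varphi}_n$) together with formula \eqref{ttp} on the Lusztig side, and Proposition \ref{cohXY} together with Remark \ref{rem:form} on the cohomology side.

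First I would treat the Lusztig-induction side. Since $\pi_{P_0}'$ is an $M_0^w$-torsor and $M \simeq \mathbb{G}_{\mathrm{m}} \times \mathit{Sp}_{2n-2}$, formula \eqref{ttp} applies with $T^F = \mu_{q+1}$ (the $F$-fixed points of the $w$-twisted torus) and $M'^F = \mathit{Sp}_{2n-2}(\bF_q)$. Quotienting $\widetilde{Y}_P$ by $\mathit{Sp}_{2n-2}(\bF_q)$ and using the commutative diagram relating the $\mu_{q+1}$-torsors $f_{Y_{P_0}'}$, $f_{Y_{2n}'}$ and the isomorphisms $Y_P \simeq Y_{P_0}' \xrightarrow{\varphi'} Y_{2n}'$, I would identify $\mathscr{K}_{Y_P,\chi}$ with $\mathscr{K}'_\chi$ on $Y_{2n}'$ and obtain
\[
 R^G_{M \subset P}(\chi \otimes 1) = \sum_i (-1)^i H_{\rm c}^i(Y_{2n,\overline{\mathbb{F}}_q}', \mathscr{K}'_\chi),
\]
where $\mathscr{K}'_1 \simeq \overline{\mathbb{Q}}_\ell$ when $\chi = 1$.

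Next I would treat the cohomology side. By Remark \ref{rem:form} there is a $\mu_{q+1}$-equivariant isomorphism $X_{2n,\mathbb{F}_{q^2}} \simeq X_{2n,\mathbb{F}_{q^2}}'$ carrying the Fermat complement $Y_{2n}$ to $Y_{2n}'$ and matching the sheaves $\mathscr{K}'_\chi$. Transporting Proposition \ref{cohXY} through this isomorphism, and using that $2n$ is even so that $(-1)^{2n-1}=-1$ and $(-1)^{2n}=1$, gives
\[
 H_{\rm c}^{2n}(X_{2n,\overline{\mathbb{F}}_q}',\overline{\mathbb{Q}}_\ell)[\psi][\chi]
 = \begin{cases}
 -\sum_i (-1)^i H_{\rm c}^i(Y_{2n,\overline{\mathbb{F}}_q}', \mathscr{K}'_\chi) & \textrm{if $\chi \neq 1$}, \\
 1 - \sum_i (-1)^i H_{\rm c}^i(Y_{2n,\overline{\mathbb{F}}_q}', \overline{\mathbb{Q}}_\ell) & \textrm{if $\chi = 1$,}
 \end{cases}
\]
as characters of $G^F$. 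Comparing this with the previous display, and noting $R^G_{M\subset P}(1) = \sum_i(-1)^i H_{\rm c}^i(Y_{2n}',\overline{\mathbb{Q}}_\ell)$, yields the claim in both cases.

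The main obstacle will be verifying that the $G^F = \mathit{Sp}_{2n}(\bF_q)$-action agrees on the two sides. On the Lusztig side the action is the one inherited through $\varphi'\colon Y_{P_0}' \to Y_{2n}'$, $gP_0 \mapsto gL_0$, which is manifestly $G^F$-equivariant; on the cohomology side it is the restriction along $\mathit{Sp}_{2n}(\bF_q) \hookrightarrow U_{2n}(\bF_q)$ used in \eqref{sub}. Both should descend to the standard action of $\mathit{Sp}_{2n}(\bF_q)$ on $\mathbb{P}^{2n-1} \setminus S_{2n}'$, but one must confirm that the equivalence of Remark \ref{rem:form} is compatible with this restricted symplectic action and not merely with the ambient unitary action. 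A secondary point to check is the $\chi=1$ correction term: the trailing $1$ comes from the trivial summand in the cohomology of the closed fiber $\pi^{-1}(0)$, analyzed via \cite{HoMaTT} in the Fermat case, and I would confirm it transports unchanged under Remark \ref{rem:form} since the relevant Galois-étale $\mu_{q+1}$-torsor structures correspond.
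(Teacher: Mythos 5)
Your proposal is correct and follows essentially the same route as the paper: the paper's proof likewise combines Proposition \ref{cohXY}, transported through Remark \ref{rem:form}-type isomorphisms $Y_{2n,\mathbb{F}_{q^2}} \simeq Y_{2n}'$ and $\widetilde{Y}_{2n,\mathbb{F}_{q^2}} \simeq \widetilde{Y}_{2n}'$ compatible with the group actions, with \eqref{ttp} and the fact that $\widetilde{\varphi}'$ is an isomorphism. The group-action compatibility you flag as the main obstacle is precisely the point the paper disposes of by asserting that these isomorphisms are ``compatible with group actions similarly as Remark \ref{rem:form}.''
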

\begin{proof}
The first equality follows from Lemma \ref{lem:schi}. 
The equality between the first one and the third one follows from Remark \ref{rem:ssec2n} 
(\cf Proposition \ref{cohXY}) and \eqref{ttp}, 
since $\widetilde{\varphi}'$ is an isomorphism. 
\end{proof}

\begin{cor}\label{cc2}  
Let $\sigma \in \Irr \oO (W,Q)$. 
Assume that 
$\sigma \neq \sigma_{1,-}$
if $n=1$. 
Then the $\Sp(V,\omega)$-representation  
$\Theta_n (\sigma)$ 
is unipotent if and only if $\sigma$ is unipotent. 
\end{cor}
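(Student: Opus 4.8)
The plan is to compute $\Theta_n(\sigma)$ explicitly for every $\sigma \in \Irr O(W,Q)$ and match it against the Lusztig induction description of Proposition \ref{lastb}. First I would recall from Subsection \ref{cation} that the irreducible representations of $O(W,Q) = U(W,h_1)\rtimes(\bZ/2\bZ)$ are the characters $\sigma_{\chi_0,\kappa}$ with $\chi_0^2=1$ and the two-dimensional $\sigma_\chi = \mathrm{Ind}_{U(W,h_1)}^{O(W,Q)}\chi$ with $\chi^2\neq 1$, the unipotent ones being exactly $\sigma_{1,+}$ and $\sigma_{1,-}$. By Lemma \ref{lem:schi} we have $\Theta_n(\sigma_\chi)\simeq\omega_{\mathit{SpO},n}[\chi]$ when $\chi^2\neq 1$, while unwinding the definition $\Theta_n(\sigma_{\chi_0,\kappa})=\Hom_{O(W,Q)}(\sigma_{\chi_0,\kappa},\omega_{\mathit{SpO},n})$ identifies it with the $\kappa$-eigenspace $\omega_{\mathit{SpO},n}[\chi_0]^\kappa$.

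Next I would identify $\mathit{Sp}(V,\omega)$ with $G^F=\mathit{Sp}_{2n}(\bF_q)$: restricting $h'$ to $V=\bF_q^{2n}$ yields the symplectic form with Gram matrix $J$, so the two groups coincide, and the actions on $\pi_{\psi,h'}=H_{\rm c}^{2n}(X_{2n,\overline{\mathbb{F}}_q}',\overline{\mathbb{Q}}_{\ell})(n)[\psi]$ and on the cohomology appearing in Proposition \ref{lastb} agree (the Tate twist is irrelevant for the isomorphism class of the representation). Hence, as characters of $\mathit{Sp}(V,\omega)=G^F$, Proposition \ref{lastb} gives $\omega_{\mathit{SpO},n}[\chi]=-R^G_{M\subset P}(\chi\otimes 1)$ for $\chi\neq 1$ and $\omega_{\mathit{SpO},n}[1]=1-R^G_{M\subset P}(1)$, where $M\simeq\mathbb{G}_{\mathrm{m}}\times\mathit{Sp}_{2n-2}$.

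The core of the argument is then the standard disjointness of Deligne--Lusztig characters associated to distinct semisimple classes, together with the definition of unipotency of \cite[Definition 7.8]{DeLuRep}. Using the decomposition \eqref{vv}, $R^G_{M\subset P}(\chi\otimes 1)$ is a rational combination of $R^G_{T\times T'}(\chi\otimes 1)$; when $\chi\neq 1$ the character $\chi$ on the $\mathbb{G}_{\mathrm{m}}$-factor corresponds to a nontrivial semisimple class, so these share no irreducible constituent with any $R^G_{T''}(1)$, and consequently $\omega_{\mathit{SpO},n}[\chi]$ has no unipotent constituent. When $\chi=1$, both the trivial character and $R^G_{M\subset P}(1)$ are unipotent, so every constituent of $\omega_{\mathit{SpO},n}[1]$ is unipotent, hence so is every subrepresentation such as $\omega_{\mathit{SpO},n}[1]^\kappa$.

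Assembling the cases gives the equivalence: $\Theta_n(\sigma_{1,\kappa})=\omega_{\mathit{SpO},n}[1]^\kappa$ is unipotent and $\sigma_{1,\kappa}$ is unipotent; for $p$ odd $\Theta_n(\sigma_{\nu,\kappa})=\omega_{\mathit{SpO},n}[\nu]^\kappa$ is non-unipotent with $\sigma_{\nu,\kappa}$ non-unipotent; and $\Theta_n(\sigma_\chi)=\omega_{\mathit{SpO},n}[\chi]$ with $\chi\neq 1$ is non-unipotent with $\sigma_\chi$ non-unipotent. The exclusion of $\sigma_{1,+}$ for $n=1$ is precisely the case $\Theta_1(\sigma_{1,+})=0$ of Proposition \ref{prop:irr}, where the equivalence would be vacuous. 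The main obstacle I anticipate is ensuring that the identification $\mathit{Sp}(V,\omega)\cong G^F$ is compatible with all the actions so that Proposition \ref{lastb} transports correctly, and invoking the orthogonality of Deligne--Lusztig characters for distinct semisimple parts at the right level of precision rather than as a routine computation.
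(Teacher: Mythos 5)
Your proposal is correct and follows essentially the same route as the paper: the paper's proof of Corollary \ref{cc2} is exactly the combination of Proposition \ref{lastb} with the decomposition \eqref{vv}, the definition of unipotency from \cite[Definition 7.8]{DeLuRep}, and the disjointness of Deligne--Lusztig series attached to distinct geometric conjugacy classes. You merely make explicit the steps the paper leaves implicit --- the identification $\Theta_n(\sigma_{\chi_0,\kappa}) \simeq \omega_{\mathit{SpO},n}[\chi_0]^{\kappa}$ and $\Theta_n(\sigma_{\chi}) \simeq \omega_{\mathit{SpO},n}[\chi]$ via Lemma \ref{lem:schi}, and the transport of Proposition \ref{lastb} through $\mathit{Sp}(V,\omega) = G^F$ --- so no genuinely different idea is involved.
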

\begin{proof}
The claim follows from \eqref{vv} and Proposition \ref{lastb}. 
\end{proof}

\begin{rem}
It is also possible to show 
Corollary \ref{cc2} using results in 
\cite{GuTiCross} if $p=2$, and 
results in \cite{TiZaMinCh} and \cite{NguLdim} 
if $p \neq 2$. 
The proof in this paper is geometric and 
does not depend on the parity of $p$. 
\end{rem}

\begin{rem}
We assume that $q$ is odd. 
If $q$ is large enough 
as in \cite[(3.11)(2)]{AdMoUnip}, 
then Corollary \ref{cc2} follows 
from \cite[Theorem 3.5(2)]{AdMoUnip}.
\end{rem}

\begin{rem}\label{lastbb}
We use the same notation as Remark \ref{remuni}. 
We have 
$\epsilon_G=(-1)^n$ and 
$\epsilon_{M}=(-1)^{n-1}$. As in Remark \ref{remuni}, we have 
\[
\dim R_{M \subset P}^G(\chi \otimes 1_{\Sp_{2n-2}(\bF_q)})=\epsilon_G
\epsilon_{M} \frac{|G^F|_{p'}}{|M^F|_{p'}}
=-\frac{q^{2n}-1}{q+1}  
\]
for any $\chi \in \mu_{q+1}^{\vee}$. 
This is compatible with Lemma \ref{dim} and Proposition \ref{lastb}.  
\end{rem}

\noindent
Naoki Imai\\
Graduate School of Mathematical Sciences, The University of Tokyo, 3-8-1 Komaba, Meguro-ku, Tokyo, 153-8914, Japan \\
naoki@ms.u-tokyo.ac.jp \\[0.5cm]
Takahiro Tsushima\\ 
Department of Mathematics and Informatics, 
Faculty of Science, Chiba University
1-33 Yayoi-cho, Inage, 
Chiba, 263-8522, Japan \\
tsushima@math.s.chiba-u.ac.jp
\end{document}